\documentclass[11pt,a4paper,reqno,backref=page]{amsart}

\usepackage{amsmath,amssymb,amsthm}
\usepackage{mathrsfs,eucal}
\usepackage{hyperref}
\hypersetup{%
  hidelinks,
  pdftitle={Lp bounds for wave operators with critical
    electromagnetic potentials},
  pdfauthor={Piero D'Ancona, Xitao Gao, and Junyong Zhang},
  pdfsubject={Electromagnetic scattering in the plane and
    sharp weighted Aharonov--Bohm bounds},
  pdfkeywords={wave operators, critical electromagnetic
    potentials, Aharonov--Bohm Hamiltonian, power weights,
    Friedrichs extension, Krein extension,
    spectral transference}
}

\newcommand{\R}{\mathbb R}
\newcommand{\Z}{\mathbb Z}
\newcommand{\C}{\mathbb C}
\newcommand{\T}{\mathbb T}
\newcommand{\N}{\mathbb N}
\newcommand{\HH}{\mathcal H}                 % Hankel transform
\newcommand{\Mell}{\mathcal M}               % Mellin transform
\newcommand{\ind}{\mathbf 1}
\newcommand{\dd}{\,d}
\DeclareMathOperator*{\slim}{s\text{-}lim}
\DeclareMathOperator{\Res}{Res}
\let\Re\relax\DeclareMathOperator{\Re}{Re}
\newcommand{\Gam}[2]{\frac{\Gamma(#1)}{\Gamma(#2)}}
\newcommand{\coloneq}{:=}

\usepackage{amsmath,amsfonts,verbatim}
\usepackage{latexsym}
\usepackage{amssymb,leftidx}
\usepackage{extarrows}
\usepackage{overpic}
\usepackage{color}
\usepackage{epsfig}
\usepackage{subfigure}
\usepackage{tikz}

\numberwithin{equation}{section}
\newtheorem{theorem}{Theorem}[section]
\newtheorem{lemma}[theorem]{Lemma}
\newtheorem{proposition}[theorem]{Proposition}
\newtheorem{corollary}[theorem]{Corollary}
\theoremstyle{definition}

\theoremstyle{remark}
\newtheorem{remark}[theorem]{Remark}

\begin{document}

\title[$L^p$ bounds for electromagnetic wave operators]{
  $L^p$ bounds for wave operators with
  critical electromagnetic potentials}

\author{Piero D'Ancona}
\address{Piero D'Ancona:
Department of Mathematics ``Guido Castelnuovo'', Sapienza
University of Rome, Piazzale Aldo Moro 5, 00185 Rome, Italy}
\email{dancona@mat.uniroma1.it}

\author[Xitao Gao]{Xitao Gao}
\address{Xitao Gao\newline
Department of Mathematics, Beijing Institute of Technology,
Beijing 100081, P.\,R.\ China}
\email{xitao\_gao@bit.edu.cn}

\author[Junyong Zhang]{Junyong Zhang}
\address{Junyong Zhang\newline
Department of Mathematics, Beijing Institute of Technology,
Beijing 100081, P.\,R.\ China}
\email{zhang\_junyong@bit.edu.cn}

\subjclass[2020]{35Q40, 42B15, 42B25, 47A40, 81U05}
\keywords{Wave operators, critical electromagnetic potentials,
  Aharonov--Bohm Hamiltonian, power weights,
  Friedrichs and Krein extensions, spectral transference}

\thanks{The first author is supported by the Sapienza
  University research projects 
  ``Wave dynamics in heterogeneous media''
  (DYNAWAVES), CUP B83C25000880005, and ``Bridging Analysis and
  Computation in Evolutionary PDEs'', 
  and by the Gruppo Nazionale per
  l'Analisi Matematica, la Probabilit\`a e le loro 
  Applicazioni (GNAMPA)
  of the Istituto Nazionale di Alta Matematica (INdAM).}

\begin{abstract}
  We study the M\o ller wave operators for scaling critical
  electromagnetic Hamiltonians in the plane. For smooth
  transverse magnetic and angular electric potentials, with
  nonnegative angular operator and magnetic flux outside
  $\frac12\Z$, we prove that the wave operators relative to
  $-\Delta$ exist, are unitary on $L^2$, and, together with
  their adjoints, are bounded on every $L^p$, $1<p<\infty$.

  We then specialize to the the Aharonov--Bohm model
  and we determine the exact ranges for the boundedness
  of their wave operators on weighted 
  $L^p(\R^2,|x|^\beta\,dx)$ spaces,
  for both the Friedrichs and the Krein realizations.
  In the Friedrichs case, this gives in particular 
  the already known
  boundedness on all $L^{p}$ spaces $1<p<\infty$,
  while boundedness fails at $p=1,\infty$.
  In the Krein case, both wave
  operators and adjoints are bounded
  precisely when $2/(2-\eta_\alpha)<p<2/\eta_\alpha$, where
  $\eta_\alpha=\max\{\alpha,1-\alpha\}$
  (here $\alpha\in(0,1)$). Thus the boundary
  condition changes the admissible exponents.
\end{abstract}

\maketitle

\section{Introduction and main results}\label{sec:intro}

We consider the scaling critical electromagnetic Hamiltonian
on $\R^2$
\begin{equation}\label{eq:general-H}
  \mathcal L_{\mathbf A,a}
  =\left(-i\nabla+\frac{\mathbf A(\theta)}r\right)^2
  +\frac{a(\theta)}{r^2},\qquad
  x=r(\cos\theta,\sin\theta).
\end{equation}
We assume a transverse magnetic potential:
$\mathbf A(\theta)=b(\theta)e_\theta$, where
$e_\theta=(-\sin\theta,\cos\theta)$, and $a,b$ are real.
Writing $D_\theta=-i\partial_\theta$, define the angular
operator and the magnetic flux by
\begin{equation}\label{eq:general-angular}
  B_{\mathbf A,a}=(D_\theta+b(\theta))^2+a(\theta),
  \qquad
  \alpha=\frac1{2\pi}\int_0^{2\pi}b(\theta)\,d\theta.
\end{equation}
When $B_{\mathbf A,a}\ge0$, we take the Friedrichs
realization of \eqref{eq:general-H}, obtained by closing its
nonnegative quadratic form on
$C_c^\infty(\R^2\setminus\{0\})$.

Our first goal is to prove boundedness on $L^p$ for the
scattering wave operators relative to the free
Laplacian. Our second goal is to identify sharp parameter
ranges for weighted $L^{p}$ estimates,
and endpoint obstructions, in the Aharonov--Bohm model,
and to determine how the choice between the Friedrichs and
Krein realizations changes those ranges.
Recall that for a selfadjoint Hamiltonian $H$, 
the wave operators are defined as
\begin{equation}\label{eq:general-wave}
  W_\pm(H,-\Delta)
  =\slim_{t\to\pm\infty}e^{itH}e^{it\Delta},
\end{equation}
when this strong limit exists on $L^2(\R^2)$. More generally,
$W_\pm(H_1,H_0)$ denotes the limit of
$e^{itH_1}e^{-itH_0}$ with the same sign convention.

The usefulness of $L^p$ wave operator bounds comes from the
intertwining identity
$m(H)P_{\rm ac}(H)=W_\pm m(-\Delta)W_\pm^*$.
Bounds on the two factors transfer free spectral estimates
to the perturbed operator. For short range Schr\"odinger
perturbations this program was developed by Yajima
\cite{Yajima1995,Yajima1999}; see also
\cite{JensenYajima2002}. Threshold effects and
structure formulas are treated in
\cite{Yajima2006,FincoYajima2006,Beceanu2014,
  BeceanuSchlag2020,DMSY2018,CMY2019}.
In one dimension, the corresponding $L^p$ bounds for
$1<p<\infty$ were established by 
Artbazar--Yajima \cite{ArtbazarYajima2000}, 
Weder \cite{Weder1999},
and
D'Ancona--Fanelli \cite{DAnconaFanelli2006}, under short
range assumptions on the potential.

Concerning the critical potentials in \eqref{eq:general-H},
dispersive and Strichartz estimates for
such operators have also been obtained directly from their
spectral representations \cite{FFFP2013,FZZ2022,GYZZ2022}.
Closer precedents of our results are the inverse square 
transplantation
and Sobolev bounds of Miao--Su--Zheng \cite{MSZ2023} and the
electromagnetic intertwining theory of
Fanelli--Su--Wang--Zhang--Zheng \cite{FSWZZ2026}.
The latter constructs a spectral intertwiner $S$
(without phase) between $\mathcal L_{\mathbf A,0}$ and
$\mathcal L_{\mathbf A,a}$, with the magnetic potential
fixed. Its planar $L^p$ bounds allow $W^{1,\infty}$
coefficients and, at flux $\alpha=\frac 12$, impose a symmetry
condition. In our first result, we handle
the full wave operator from $\mathcal L_{\mathbf A,a}$
to $-\Delta$, under moderate smoothness assumptions
and for $\alpha\neq \frac 12$. To make the paper self contained,
we include a proof of the required transplantation bound in
Appendix~\ref{app:transplantation}. 

Our first result is the following.

\begin{theorem}[General electromagnetic bounds]
\label{thm:general}
  Let $a\in W^{3,\infty}(\T;\R)$, $b\in C^{1}(\T;\R)$, let
  $\mathbf A(\theta)=b(\theta) e_\theta$, and assume 
  $B_{\mathbf A,a}\ge0$ in \eqref{eq:general-angular} and
  $\alpha\notin\frac12\Z$. The wave operators
  $W_\pm(\mathcal L_{\mathbf A,a},-\Delta)$ exist and are
  unitary on $L^2(\R^2)$. For every $1<p<\infty$, they and
  their adjoints extend boundedly to $L^p(\R^2)$ and are
  mutually inverse there. 
  % The same conclusions hold for
  % $W_\pm(\mathcal L_{\mathbf A,a},\mathcal L_{\mathbf A,0})$.
\end{theorem}

\begin{remark}[Half flux]
  The case $\alpha=1/2$ is excluded from the previous result.
  Nevertheless, under the
  hypotheses of Theorem~\ref{thm:general} plus the
  symmetry $a(\pi-\theta)=a(\pi+\theta)$,
  $0\le\theta\le\pi$, combining
  Corollary~\ref{cor:AB-unweighted} below with
  \cite[Theorem~2.13]{FSWZZ2026} gives also at half flux a unitary
  spectral intertwiner from $-\Delta$ to
  $\mathcal L_{\mathbf A,a}$, bounded together with its
  inverse on every $L^p$, $1<p<\infty$. Thus spectral
  similarity and transference remain available, without
  identifying a true wave operator; see
  Remark~\ref{rem:half-flux-intertwining}.
\end{remark}

We next specialize to the Aharonov--Bohm model.
For constant $b=\alpha$ and $a=0$, the Friedrichs
Hamiltonian is
\begin{equation}\label{eq:Halpha}
  H_\alpha
  =\left(-i\nabla+\alpha\frac{(-x_2,x_1)}{|x|^2}\right)^2,
  \qquad \alpha\in\R.
\end{equation}
Set
\begin{equation}\label{eq:waveop}
  W_\pm=W_\pm(H_\alpha,H_0),\qquad H_0=-\Delta.
\end{equation}
These operators exist and are unitary on $L^2$, see
\cite{Ruijsenaars1983}; a proof with our conventions is given
in Section~\ref{sec:prelim}. We retain all $\alpha\in \mathbb{R}$,
since integer gauge changes conjugate the magnetic
Hamiltonian but do not leave the free comparison operator
fixed.

For $\beta\in\R$, put
\begin{equation*}
  L^p_\beta=L^p(\R^2,|x|^\beta\,dx),\qquad
  \rho_\alpha=\operatorname{dist}(\alpha,\Z).
\end{equation*}

\begin{theorem}[Friedrichs weighted estimates]
  \label{thm:weighted}
  Let $\alpha\in\R$, $1<p<\infty$, and $\beta\in\R$.
  \begin{enumerate}
  \item[(i)] If $\alpha=0$, then $W_\pm=W_\pm^*=I$ on $L^p_\beta$
             for every $\beta\in\R$.
  \item[(ii)] If $\alpha\ne0$, then
  \begin{equation}\label{eq:weighted-W-range}
    W_\pm:L^p_\beta\longrightarrow L^p_\beta
    \quad\text{is bounded if and only if}\quad
    -2-p\rho_\alpha<\beta<2(p-1),
  \end{equation}
  whereas
  \begin{equation}\label{eq:weighted-Wstar-range}
    W_\pm^*:L^p_\beta\longrightarrow L^p_\beta
    \quad\text{is bounded if and only if}\quad
    -2<\beta<2(p-1)+p\rho_\alpha.
  \end{equation}
Consequently, both $W_\pm$ and $W_\pm^*$ are bounded on
  $L^p_\beta$ exactly for
  \begin{equation}\label{eq:weighted-similarity-range}
    -2<\beta<2(p-1),
  \end{equation}
  \end{enumerate}
  For every $\alpha\ne0$, the operators $W_\pm$ and $W_\pm^*$
  are unbounded on $L^1(\R^2)$ and on $L^\infty(\R^2)$.
\end{theorem}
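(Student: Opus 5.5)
The strategy is to decompose everything into angular harmonics and reduce each wave operator to an explicit Mellin multiplier on the half-line. Writing $f=\sum_{k\in\Z} f_k(r)e^{ik\theta}$, the operator $H_\alpha$ acts on the $k$-th harmonic as the Bessel operator of order $\nu_k=|k+\alpha|$, and $H_0$ as the one of order $|k|$. By the stationary representation, $W_\pm$ acts on the $k$-th harmonic as the Hankel transplantation
\[
  W_\pm f_k = e^{\mp i\frac{\pi}{2}(\nu_k-|k|)}\,\HH_{\nu_k}\HH_{|k|} f_k ,
\]
up to our sign conventions, with $\HH_\nu$ the Hankel transform. For $\alpha=0$ the phase vanishes and $\HH_{|k|}^2=I$, which gives (i).

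In the Mellin variable, $\HH_\mu\HH_\nu$ becomes multiplication by
\[
  \frac{\Gamma(\frac{\nu+1+s}{2})\,\Gamma(\frac{\mu+1-s}{2})}{\Gamma(\frac{\nu+1-s}{2})\,\Gamma(\frac{\mu+1+s}{2})}
\]
on a suitable line $\Re s=\sigma$. The weight $|x|^\beta$ together with the radial measure $r\,dr$ fixes this line at $\sigma=(2+\beta)/p$, up to normalization. The kernel of each harmonic piece is then a Mellin convolution, homogeneous of degree $-2$. It is singular only on the diagonal, where it behaves like a Calderón--Zygmund kernel, and it has power tails at $r\to0$ and $r\to\infty$ governed by the nearest poles, namely $r^{\nu_k}$ and $r^{|k|}$ behaviour. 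Such an operator is bounded on $L^p_\beta$ exactly when the line $\sigma$ lies strictly inside the pole-free strip of the symbol.

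The poles come from $\Gamma(\frac{\nu+1+s}{2})$ and $\Gamma(\frac{\mu+1-s}{2})$; the denominator supplies zeros and no poles. The binding constraints arise from the smallest orders. For $W_\pm$ the upper limit $\beta<2(p-1)$ comes from the free order $|k|=0$. The lower limit $-2-p\rho_\alpha$ comes from $\min_k\nu_k=\rho_\alpha$. The adjoint is obtained by swapping the roles of the two orders, which gives \eqref{eq:weighted-Wstar-range}. This yields boundedness of each harmonic piece and necessity: for $\beta$ outside the range, test against a single harmonic $f=\phi(r)e^{ik_0\theta}$, with $k_0=0$ or $k_0$ attaining $\rho_\alpha$, and check that the output has a nonintegrable tail $r^{-2-\rho}$ or $r^{\rho}$ near $0$ or $\infty$. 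The $L^1$ and $L^\infty$ failures correspond to the endpoints $\beta=0$, $p=1$ or $\infty$, where the same tail argument with $\rho_\alpha>0$ and the $k=0$ term applies, using $\nu_0=|\alpha|\neq0$.

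The main obstacle is summing over $k$ uniformly, since harmonic-by-harmonic boundedness does not give $L^p$ boundedness. My plan is to split $W_\pm=I+(W_\pm-I)$. For large $|k|$, the orders $\nu_k$ and $|k|$ differ by a fixed shift $\pm\alpha$, and the symbol tends to the constant phase-corrected one with errors $O(\langle k\rangle^{-1})$ uniformly on the strip. The phase $e^{\mp i\pi(\nu_k-|k|)/2}$ becomes $e^{\mp i\pi\alpha\,\mathrm{sgn}k/2}$, a combination of the identity and a Hilbert-type angular projection, bounded on $L^p_\beta$ for all $1<p<\infty$ in the stated $\beta$ range; this would need an $A_p$-type check in the radial weight, which holds for $-2<\beta<2(p-1)$. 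For the remainder I would write explicit kernels via the Weber--Schafheitlin formula and bound $\sum_k$ of the kernels by a sum of a Hardy-type operator and a Calderón--Zygmund piece on $\R^2$, possibly invoking the transplantation bound of Appendix~\ref{app:transplantation} for the diagonal part. Only the finitely many small $|k|$ pieces see the weight limits beyond $(-2,2(p-1))$. There I would handle the extended ranges for $W_\pm$ by noting that for $|k|\ge1$ the poles recede, so only the $k=0$ and $k$ with $\nu_k=\rho_\alpha$ terms, treated individually by the one-dimensional Mellin criterion, matter.
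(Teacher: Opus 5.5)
\textbf{Verdict.} There are two genuine gaps: the summation over angular modes rests on a false claim, and the endpoint argument misses the mechanism behind the $L^\infty$ failure for non-integer flux.

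\textbf{Summation over $k$.} Your channel reduction, Mellin strips, single-harmonic necessity tests, and the $L^1$ failure of $W_\pm$ via the $r^{-2}$ tail in mode $k=0$ all agree with the paper. The summation over $k$, however, is built on something false. The symbol is \emph{not} uniformly within $O((1+|k|)^{-1})$ of a constant phase. Using $G_a(\zeta)=\zeta^a(1+O(|\zeta|^{-1}))$ one gets, uniformly in $\tau$, for $k\to+\infty$,
\[
  e^{-i\delta_k}\,m_{k+\alpha,k}(c-i\tau)
  =e^{i\pi\alpha/2}\,e^{-i\alpha\arctan(\tau/k)}+O(k^{-1}).
\]
This is about $e^{i\pi\alpha/2}$ for $|\tau|\ll k$, but it tends to $1$ as $\tau\to+\infty$ and to $e^{i\pi\alpha}$ as $\tau\to-\infty$. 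So on high modes, $W_\pm$ is close to a nonconstant degree-zero homogeneous multiplier in $(\tau,k)$. The remainder after removing a phase-corrected identity is therefore not small and cannot be summed absolutely.

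The appendix lemma does not rescue this. It needs $|\nu_k-\mu_k|\le C_0/(1+|k|)$, whereas here $\nu_k-\mu_k=\pm\alpha$. The missing idea is a genuinely two-parameter multiplier theorem in $(\tau,k)$, as the paper uses:
\begin{itemize}
\item the two tail symbols depend analytically on $k$ through Gamma quotients, so they extend to smooth $Q_\pm$ on $\R^2$ with $|\partial_\tau^a\partial_\eta^bQ_\pm|\le C(1+|\tau|+|\eta|)^{-a-b}$;
\item Marcinkiewicz on $\R^2$ and de~Leeuw restriction to $\R\times\Z$ then apply;
\item angular Riesz projections cut out the tails, and one-dimensional Mikhlin handles the finitely many remaining modes.
\end{itemize}
Since the weight only fixes the line $c=(2+\beta)/p$, this covers all of $-\rho_\alpha<c<2$ at once. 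Your Calder\'on--Zygmund/$A_p$ fallback would at best give $-2<\beta<2(p-1)$, and so would miss $-2-p\rho_\alpha<\beta\le-2$. Separately, the angular Riesz projections act fibrewise in $r$ and are bounded on $L^p_\beta$ for \emph{every} $\beta$; no $A_p$ check arises there.

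\textbf{The $L^\infty$ endpoint.} The failure of $W_\pm$ on $L^\infty$ (equivalently, by duality, of $W_\pm^*$ on $L^1$) is not a tail phenomenon when $\alpha\notin\Z$. The relevant lines are $c=0$ for $W_\pm$ and $c=2$ for $W_\pm^*$. They lie strictly inside every strip $(-\nu_k,\mu_k+2)$, resp.\ $(-\mu_k,\nu_k+2)$, because $\nu_k\ge\rho_\alpha>0$. Hence images of single-harmonic bumps are bounded, resp.\ integrable, and no tail test can detect the failure.

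The paper uses a different mechanism. At $c=2$ the mode-$0$ adjoint symbol $e^{\pm i\delta_0}m_{0,|\alpha|}(2-i\tau)$ has different limits as $\tau\to+\infty$ and $\tau\to-\infty$ whenever $\alpha\notin2\Z$; this is a Hilbert transform hidden in the logarithmic radius. A bounded translation-invariant operator on $L^1(\R)$ is convolution with a finite measure, and the Fourier--Stieltjes transform of such a measure cannot have distinct limits at $\pm\infty$. For $\alpha\in\Z\setminus\{0\}$ a tail argument does work, but in the mode $k=-\alpha$, not $k=0$. There $\nu_k=0$, and $(W_{\pm,k})^*=e^{\pm i\delta_k}\HH_{|\alpha|}\HH_0$ has an $r^{-2}$ tail.
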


In particular, in the unweighted case we reobtain

\begin{corollary}[Friedrichs $L^{p}$ bounds]
\label{cor:AB-unweighted}
  For every $\alpha\in\R$ and $1<p<\infty$, the operators
  $W_\pm,W_\pm^*$ are bounded on $L^p(\R^2)$, with constants
  locally bounded in $\alpha$. 
\end{corollary}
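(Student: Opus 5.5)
The corollary should follow from Theorem~\ref{thm:weighted} by taking $\beta=0$. The remaining work is the case check and the claim that the constants are locally bounded in $\alpha$.

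If $\alpha=0$, part (i) gives $W_\pm=W_\pm^*=I$ and there is nothing to prove. If $\alpha\neq0$, we check that $\beta=0$ lies in both \eqref{eq:weighted-W-range} and \eqref{eq:weighted-Wstar-range}. Since $1<p<\infty$ we have $2(p-1)>0$, and since $\rho_\alpha\ge0$ we have $-2-p\rho_\alpha<0$ and $2(p-1)+p\rho_\alpha>0$. So $\beta=0$ satisfies the similarity range \eqref{eq:weighted-similarity-range}, $-2<0<2(p-1)$, and both $W_\pm$ and $W_\pm^*$ are bounded on $L^p=L^p_0$.

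The qualitative statement does not give local boundedness of the operator norms, so we have to look inside the proof of Theorem~\ref{thm:weighted}. The wave operators for $H_\alpha$ are diagonal in the angular Fourier decomposition $e^{ik\theta}$. On the $k$-th mode they act as a Hankel transplantation $\HH_{|k+\alpha|}\HH_{|k|}$ (or its adjoint) composed with the phase $e^{\mp i\frac\pi2(|k+\alpha|-|k|)}$. In Mellin variables these become multipliers built from ratios of Gamma functions, $\Gam{\cdot}{\cdot}$, whose parameters depend continuously on $\alpha$.

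The weighted $L^p_\beta$ bounds for $\beta=0$ rest on two estimates:
\begin{itemize}
\item Mihlin/Marcinkiewicz-type symbol estimates on the line $\Re s=2/p$ in Mellin variables.
\item Uniform control over $k$ of the differences $|k+\alpha|-|k|$, which are bounded by $|\alpha|$.
\end{itemize}
These constants depend only on finitely many derivatives of the Gamma ratios on that line, and on the distance of $\Re s=2/p$ from the poles. The poles sit at $\Re s=2/p$ boundaries determined by $\rho_\alpha$ and the endpoints $0,2$. For $\beta=0$ the relevant margins are $2(p-1)>0$ and $2$, which do not depend on $\alpha$. The margin involving $p\rho_\alpha$ only helps.

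Hence on any compact set of $\alpha$ the symbol bounds are uniform, and so are the resulting $L^p$ norms. The one subtlety is that as $\alpha\to$ an integer the modes with $|k+\alpha|$ near $0$ degenerate. But at $\beta=0$ the admissible range does not shrink, since the $\rho_\alpha$ terms appear only as extra room. I expect this verification of uniformity through the Mellin-multiplier constants to be the only nontrivial point. It amounts to rereading the constants in the proof of Theorem~\ref{thm:weighted} rather than proving anything new.
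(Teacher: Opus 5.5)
Your proposal is correct and follows the paper's route: the bound is the $\beta=0$ case ($c=2/p\in(0,2)$) of the sufficiency result Proposition~\ref{prop:weighted-suff}, the adjoints are handled by duality, and local uniformity comes from rereading the constants in that proof. The one point the paper makes explicit that you leave implicit is that a single cutoff $K\ge A_0+2/p+3$, with $A_0=\sup_{\alpha\in E}|\alpha|$, can be fixed for all $\alpha$ in a compact set $E$. Then the Gamma-quotient parameters lie in $[-A_0/2,A_0/2]$ in Lemma~\ref{lem:gamma}, and only a uniformly bounded number of low modes need separate Mikhlin bounds, each at distance at least $\min\{2/p,2-2/p\}$ from its strip boundaries.
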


We now turn to the effect of the boundary condition.
For $0<\alpha<1$, the Krein realization replaces the regular
behaviours $r^\alpha$ and $r^{1-\alpha}$ by the singular
behaviours $r^{-\alpha}$ and $r^{-(1-\alpha)}$ in the two
critical angular channels, while all other channels are unchanged,
see \eqref{eq:Krein-def} below for precise definitions.
The new effect is a reduced parameter range for the estimates,
even in unweighted $L^{p}$ spaces:

\begin{theorem}[Krein weighted estimates]\label{thm:krein}
Let $0<\alpha<1$, let $H_\alpha^{\rm K}$ be the Krein extension
defined in \eqref{eq:Krein-def}, and set
\begin{equation*}
  \eta_\alpha=\max\{\alpha,1-\alpha\}=1-\rho_\alpha.
\end{equation*}
Its wave operators
$$W_\pm^{\rm K}=\slim_{t\to\pm\infty}e^{itH_\alpha^{\rm
  K}}e^{-itH_0}$$
exist and are unitary on $L^2(\R^2)$. If $1<p<\infty$ and
$\beta\in\R$, then
\begin{equation}\label{eq:Krein-W-range}
  W_\pm^{\rm K}:L^p_\beta\longrightarrow L^p_\beta
  \quad\text{is bounded if and only if}\quad
  p\eta_\alpha-2<\beta<2(p-1).
\end{equation}
Moreover,
\begin{equation}\label{eq:Krein-Wstar-range}
  (W_\pm^{\rm K})^*:L^p_\beta\longrightarrow L^p_\beta
  \quad\text{is bounded if and only if}\quad
  -2<\beta<2(p-1)-p\eta_\alpha.
\end{equation}
Consequently, 
\begin{equation}\label{eq:Krein-unweighted}
  W_\pm^{\rm K}\text{ and }(W_\pm^{\rm K})^*\text{ are both
  bounded on }L^p(\R^2)
  \quad\Longleftrightarrow\quad
  \frac{2}{2-\eta_\alpha}<p<\frac{2}{\eta_\alpha}.
\end{equation}
They are inverse isomorphisms on $L^p_\beta$
precisely when
\begin{equation}\label{eq:Krein-similarity-range}
  p\eta_\alpha-2<\beta<2(p-1)-p\eta_\alpha.
\end{equation}
\end{theorem}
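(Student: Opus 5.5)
The approach is a partial wave decomposition followed by a Mellin-type analysis of one-dimensional Hankel operators. We expand $f=\sum_{k\in\Z}f_k(r)e^{ik\theta}$. The Krein extension $H_\alpha^{\rm K}$ agrees with the Friedrichs operator $H_\alpha$ on every channel except the two critical ones, $k=0$ and $k=-1$ after normalizing $0<\alpha<1$, where the relevant orders are $\nu=\alpha$ and $\nu=1-\alpha$. On these two channels we use $-\nu$ instead of $\nu$, i.e.\ the Bessel functions $J_{-\nu}$ instead of $J_\nu$. Existence and unitarity of $W_\pm^{\rm K}$ on $L^2$ then follow as in the Friedrichs case. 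The stationary representation in each channel is
\[
W_\pm f_k=e^{\mp i\frac\pi2(\nu_k-|k|)}\,\mathcal H_{\nu_k}\mathcal H_{|k|}f_k,
\]
where $\mathcal H_\mu$ is the Hankel transform. For the critical channels $\nu_k$ is replaced by $-\nu_k$, and the order of the phase is adjusted accordingly. We write $W_\pm^{\rm K}=W_\pm+(W_\pm^{\rm K}-W_\pm)$, where the difference is supported on the two channels $k\in\{0,-1\}$. Theorem~\ref{thm:weighted} gives boundedness of $W_\pm$ on $L^p_\beta$ for $-2-p\rho_\alpha<\beta<2(p-1)$, and this range contains the claimed one. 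The problem therefore reduces to the exact $L^p_\beta$-boundedness range of the two single-channel operators $T_{\mu,\nu}=\mathcal H_{-\nu}\mathcal H_{\mu}$, with $\mu\in\{0,1\}$ and $\nu\in\{\alpha,1-\alpha\}$, acting on radial profiles times $e^{ik\theta}$. For the adjoint we use $T_{\mu,\nu}^*=\mathcal H_\mu\mathcal H_{-\nu}$.

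The key computation is the Mellin multiplier of $T_{\mu,\nu}$. With $\mathcal H_\mu r^{s}\propto r^{-2-s}$, the composition $\mathcal H_{-\nu}\mathcal H_\mu$ is a Mellin multiplier of the form
\[
m(z)=\frac{\Gamma\bigl(\frac{\mu+z}{2}\bigr)\,\Gamma\bigl(\frac{2-\nu-z}{2}\bigr)}{\Gamma\bigl(\frac{2+\mu-z}{2}\bigr)\,\Gamma\bigl(\frac{-\nu+z}{2}\bigr)}
\]
(up to normalization), acting on the line $\Re z=c=(2+\beta)/p$, which encodes $L^p_\beta$ of radial functions with measure $r\,dr$. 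Its poles come from $\Gamma((\mu+z)/2)$, at $\Re z\le-\mu$, and from $\Gamma((2-\nu-z)/2)$, at $\Re z\ge 2-\nu$. Thus $m$ is analytic, with symbol-type bounds $|\partial^j m(c+it)|\lesssim\langle t\rangle^{-j}$ by Stirling, precisely on the strip $-\mu<c<2-\nu$. Writing $\nu=\eta_\alpha$ or $1-\eta_\alpha$, the binding constraint for $W^{\rm K}$ is $c<2-\eta_\alpha$ together with the lower constraint inherited from the Friedrichs part. After translating $c=(2+\beta)/p$ and carefully tracking which factor dominates in the actual formula, this yields $p\eta_\alpha-2<\beta<2(p-1)$. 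I would double-check the normalization here, since the claimed lower bound $\beta>p\eta_\alpha-2$ corresponds to $c>\eta_\alpha$. This suggests that the correct multiplier has its pole from $\mathcal H_{-\nu}$ at $\Re z=\nu$, from the singular behaviour $r^{-\nu}$ on the input side, so that the strip is $\nu<c<2$. The adjoint strip $0<c<2-\nu$ is then the dual range. Inside the strip, boundedness on $L^p(r\,dr)$ with power weights follows from the Mellin multiplier theorem, i.e.\ the Hörmander--Mikhlin theorem on the multiplicative group $(\R_+,dr/r)$ after conjugating by $r^{c}$. The result is then transplanted to $\R^2$: only two channels are involved, so no summation issue arises.

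For necessity, we test on functions whose Mellin transform concentrates near the boundary line of the strip. Concretely, take $f=\chi(r)$, or $f=\chi(r)e^{-i\theta}$, with $\chi$ smooth and compactly supported away from $0$. The output $T f$ then behaves like $C r^{-\nu}$ as $r\to0$ (the residue at the pole), with $C\neq0$ for suitable $\chi$. This fails to lie in $L^p_\beta$ near the origin exactly when $\beta\le p\nu-2$. For $W^{\rm K}$ we choose the channel with $\nu=\eta_\alpha$. The upper endpoint $\beta=2(p-1)$ is obtained from the decay $r^{-2}$ at infinity, as in Theorem~\ref{thm:weighted}, and the adjoint follows by duality $L^p_\beta\leftrightarrow L^{p'}_{-\beta p'/p}$. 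Once both weighted ranges are known, the unweighted statement \eqref{eq:Krein-unweighted} and the similarity range \eqref{eq:Krein-similarity-range} are obtained by intersecting the two ranges and setting $\beta=0$, which gives $p\eta_\alpha<2$ and $2(p-1)>p\eta_\alpha$.

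The main obstacle I expect is the exact computation and normalization of the Mellin symbol of $\mathcal H_{-\nu}\mathcal H_\mu$, including the phase factor, together with the justification that $\mathcal H_{-\nu}$ is a legitimate unitary Hankel transform for $0<\nu<1$. On the necessity side, the difficulty is verifying that the residue coefficient is nonzero, so that the blow-up $r^{-\nu}$ genuinely occurs for some test function. I would handle the sufficiency direction via the explicit Weber--Schafheitlin kernel as a cross-check on the Mellin computation.
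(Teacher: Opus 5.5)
Your overall plan matches the paper's. You split $W_\pm^{\rm K}$ into the Friedrichs operator plus corrections in the channels $k\in\{0,-1\}$, treat each fixed channel as a Mellin multiplier controlled by the one dimensional Mikhlin theorem on $\Re z=c$, get sharpness from residue asymptotics as $r\to0$ and $r\to\infty$, obtain the adjoint range by weighted duality, and intersect.

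The genuine defect is in the one computation that decides the answer. The symbol you display,
\[
  \frac{\Gamma\bigl(\frac{\mu+z}{2}\bigr)\,\Gamma\bigl(\frac{2-\nu-z}{2}\bigr)}
       {\Gamma\bigl(\frac{2+\mu-z}{2}\bigr)\,\Gamma\bigl(\frac{z-\nu}{2}\bigr)},
\]
is the reciprocal $1/m_{-\nu,\mu}=m_{\mu,-\nu}$ of the correct one. It is the symbol of $T^{-1}=T^*=\mathcal H_\mu\mathcal H_{-\nu}$, not of $T=\mathcal H_{-\nu}\mathcal H_\mu$. Consequently:
\begin{itemize}
\item The strip $-\mu<c<2-\nu$ and the constraint $c<2-\eta_\alpha$ describe $(W_\pm^{\rm K})^*$. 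Taken at face value they assert boundedness of $W_\pm^{\rm K}$ for some $c\le\eta_\alpha$, where it fails, and deny it for $2-\eta_\alpha\le c<2$, where it holds.
\item The displayed symbol contradicts your own necessity argument. It \emph{vanishes} at $z=\nu$ instead of having a pole there, so it cannot produce the $r^{-\nu}$ behaviour you use.
\item Your corrected strip $\nu<c<2$ is read off from the statement, not derived.
\end{itemize}

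The fix is the computation in the proof of Proposition~\ref{prop:symbol}. The outer transform contributes $j_{-\nu}(z)$ and the inner one $j_\mu(2-z)$, with $j_\gamma$ as in \eqref{eq:bessel-mellin}, so
\[
  m_{-\nu,\mu}(z)=j_{-\nu}(z)\,j_\mu(2-z)
  =\frac{\Gamma\bigl(\frac{z-\nu}{2}\bigr)\,\Gamma\bigl(\frac{\mu+2-z}{2}\bigr)}
        {\Gamma\bigl(\frac{2-\nu-z}{2}\bigr)\,\Gamma\bigl(\frac{\mu+z}{2}\bigr)},
  \qquad \nu<\Re z<\mu+2 .
\]
For $(\mu,\nu)=(0,\alpha)$ and $(1,1-\alpha)$ the strips are $(\alpha,2)$ and $(1-\alpha,3)$. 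Intersecting with the Friedrichs strip $(-\rho_\alpha,2)$ gives $(\eta_\alpha,2)$. The lower end comes from the Krein channel of order $-\eta_\alpha$, not from the Friedrichs part. The upper end comes from the Krein channel $k=0$.

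Necessity must then be run in the Krein channels.
\begin{itemize}
\item The pole of $\Gamma(\frac{z-\nu}{2})$ at $z=\nu$ has residue $2\Gamma(\frac{\mu+2-\nu}{2})/(\Gamma(1-\nu)\Gamma(\frac{\mu+\nu}{2}))\neq0$. It produces the term $r^{-\nu}\int_0^\infty h(s)s^{\nu-1}\,ds$ at the origin, nonzero for every nonnegative $h\not\equiv0$. This is an output singularity, coming from $J_{-\nu}(r\xi)\sim(r\xi)^{-\nu}$, not an input one.
\item For $\mathcal H_{-\alpha}\mathcal H_0$, the pole at $z=2$ gives the tail $-\alpha r^{-2}\int_0^\infty h(s)s\,ds$.
\end{itemize}
The Friedrichs channel $\mathcal H_\alpha\mathcal H_0$ used in Theorem~\ref{thm:weighted} is no longer a component of $W_\pm^{\rm K}$. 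So the upper endpoint argument must use the Krein channel $k=0$, not simply be taken over from Theorem~\ref{thm:weighted}.

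Finally, you rightly flag that the Mellin identity for a negative outer order needs justification. The paper obtains it on every line of the strip from the $L^2$ identity on $\Re z=1$ (Derezi\'nski--Richard), by moving the contour using the rapid vertical decay of $\mathcal M h$; see Step~5 of the proof of Proposition~\ref{prop:symbol}.
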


We recall that selfadjoint realizations of the AB Hamiltonian
and scattering formulas have been studied in
\cite{Ruijsenaars1983,AdamiTeta1998,Richard2009,
  PankrashkinRichard2011,Fermi2024},
and weighted Hankel transplantation estimates are classical
\cite{Stempak2002,NowakStempak2006}. In the normalization
\eqref{eq:hankel}, \cite[Corollary~1.4]{Stempak2002} gives
boundedness of $\HH_\nu\HH_\mu$ on
$L^p((0,\infty),r^{\beta+1}\,dr)$ whenever
$\nu,\mu>-1$, $1<p<\infty$, and
\begin{equation*}
  -2-p\nu<\beta<p(\mu+2)-2.
\end{equation*}
This includes the negative orders of the Krein channels.
Ciaurri \cite{Ciaurri2026} proves uniform weighted estimates
for shifted order families and an $\ell^2$ valued inequality,
giving mixed norm results with angular $L^2$.
Such estimates do not by themselves control the full spatial
$L^p$ norm when $p\ne2$.

\subsection{Organization of the proof}

Sections~\ref{sec:prelim}--\ref{sec:interior} develop the
spectral and Mellin representations and prove sufficient AB
weighted bounds. Their unweighted consequence is the
magnetic input for the general theorem in
Section~\ref{sec:electromagnetic}.
Sections~\ref{sec:endpoints} and \ref{sec:Krein} establish
sharpness, endpoint failure, and the Krein classification.
Section~\ref{sec:applications} collects transference and
change of flux. Appendix~\ref{app:transplantation} proves
the transplantation bound used in
Section~\ref{sec:electromagnetic} from the Mellin estimates
and angular lemmas already established in the paper.

\section{Spectral preliminaries and scattering formulas}
\label{sec:prelim}

In this section, we introduce some notation and preliminaries about spectral property and known scattering results. 

\subsection{Notation}

Polar coordinates on $\R^2$ are 
denoted by $x=(r\cos\theta,r\sin\theta)$.
For $f\in L^2(\R^2)$, we write
\begin{equation}\label{eq:angular}
  f(r,\theta)=(2\pi)^{-1/2}\sum_{k\in\Z}
    e^{ik\theta}f_k(r),\qquad
  f_k(r)=(2\pi)^{-1/2}\int_0^{2\pi}
    e^{-ik\theta}f(r,\theta)\dd\theta,
\end{equation}
so that $\|f\|_{L^2(\R^2)}^2=\sum_k\|f_k\|^2_{L^2(\R_+,r\,dr)}$;
the map $f\mapsto(f_k)_{k\in\Z}$ is unitary from $L^2(\R^2)$
onto $\bigoplus_{k}L^2(\R_+,r\,dr)$, and we denote by
\begin{equation}\label{eq:Pk}
  P_kf(r,\theta)=\frac1{2\pi}\int_0^{2\pi}
    e^{ik(\theta-\omega)}f(r,\omega)\dd\omega
  =(2\pi)^{-1/2}e^{ik\theta}f_k(r),
\end{equation}
the projection onto the $k$th angular mode; by Minkowski's
inequality $P_k$ is a contraction on every $L^p(\R^2)$, $1\le
p\le\infty$. For a fixed flux $\alpha\in\R$, we set
\begin{equation}\label{eq:mudef}
  \mu_k=|k|,\qquad \nu_k=|k+\alpha|,\qquad
  \delta_k=\tfrac{\pi}{2}(\mu_k-\nu_k),\qquad k\in\Z.
\end{equation}
The Hankel transform of real order $\gamma>-1$ is 
\begin{equation}\label{eq:hankel}
  (\HH_\gamma h)(\xi)=\int_0^\infty J_\gamma(r\xi)\,h(r)\,r\dd
  r,\qquad
  \xi\ge0,
\end{equation}
where $J_\gamma$ is the Bessel function of the first kind, and
the Mellin transform is 
\begin{equation}\label{eq:mellin}
  \Mell h(z)=\int_0^\infty h(r)\,r^{z-1}\dd r.
\end{equation}
Here $z\in \C$ if $h\in C^\infty_c(\R^+)$ or may be restricted to a suitable vertical strip, depending on the properties of $h$.
For $1\le p\le\infty$, the map
\begin{equation}\label{eq:Up}
  (\mathcal U_ph)(u)=e^{2u/p}h(e^u)
\end{equation}
is an isometric isomorphism of $L^p(\R_+,r\,dr)$ onto
$L^p(\R,du)$ (with $\mathcal U_\infty h(u)=h(e^u)$), and with
the Fourier transform $\widehat g(\tau)=\int_\R
e^{-iu\tau}g(u)\dd u$ we have
\begin{equation}\label{eq:mellin-fourier}
  \widehat{\mathcal U_ph}(\tau)=\Mell
  h\bigl(\tfrac2p-i\tau\bigr).
\end{equation}
Finally, for $a\in\R$ and $\zeta\in\C$, we use throughout the Gamma quotient
\begin{equation}\label{eq:Ga}
  G_a(\zeta)=\Gam{\zeta+a}{\zeta},\qquad \Re\zeta>0,\
  \Re(\zeta+a)>0.
\end{equation}
We use $C$ to denote a positive constant whose value may change from line
to line.

\subsection{Bessel functions and the Hankel transform}

We recall the facts about Bessel functions that will be used;
see \cite[Ch.~10]{NIST} and \cite{Watson1944}. For every real
$\gamma>-1$,
\begin{equation}\label{eq:bessel-bounds}
  |J_\gamma(x)|\le C_\gamma x^\gamma\quad(0<x\le1),\qquad
  |J_\gamma(x)|\le C_\gamma x^{-1/2}\quad(x\ge1),
\end{equation}
and more precisely, for $x\ge1$,
\begin{equation}\label{eq:bessel-asym}
  J_\gamma(x)=(2\pi x)^{-1/2}
  \Bigl\{e^{i(x-\pi\gamma/2-\pi/4)}
    +e^{-i(x-\pi\gamma/2-\pi/4)}\Bigr\}
  +R_\gamma(x),\qquad |R_\gamma(x)|\le C_\gamma x^{-3/2}.
\end{equation}
The Mellin transform of $J_\gamma$ is \cite[Eq.~10.22.43]{NIST}
\begin{equation}\label{eq:bessel-mellin}
  j_\gamma(w)\coloneq\int_0^\infty t^{w-1}J_\gamma(t)\dd t
  =2^{w-1}\,\Gam{\frac{\gamma+w}{2}}
    {\frac{\gamma-w+2}{2}},\qquad
  -\gamma<\Re w<\tfrac32,
\end{equation}
Within this strip, the integral is absolutely convergent when
$\Re w<\frac12$ and converges as an improper integral when $\Re
w\ge\frac12$.

The Hankel transform $\HH_\gamma$, $\gamma>-1$, initially
defined on $C_c^\infty(0,\infty)$, extends to a unitary
involution of $L^2(\R_+,r\,dr)$: $\HH_\gamma\HH_\gamma=I$ and
$\HH_\gamma^*=\HH_\gamma$; see \cite[\S8.18]{Titchmarsh1948} for
nonnegative orders and
\cite[Propositions~4.5--4.6]{DerezinskiRichard2017} for the full
range. For $h\in C_c^\infty(0,\infty)$ and
\begin{equation}\label{eq:Lgamma}
  L_\gamma=-\partial_r^2-\frac1r\partial_r+\frac{\gamma^2}{r^2},
\end{equation}
Bessel's equation $L_\gamma
J_\gamma(\cdot\,\xi)=\xi^2J_\gamma(\cdot\,\xi)$ and two
integrations by parts give
\begin{equation}\label{eq:hankel-L}
  \HH_\gamma(L_\gamma h)(\xi)=\xi^2\,\HH_\gamma h(\xi),\qquad
  h\in C_c^\infty(0,\infty).
\end{equation}
We define the selfadjoint operator
\begin{equation}\label{eq:hgamma}
  h_\gamma=\HH_\gamma\,M_{\xi^2}\,\HH_\gamma
  \quad\text{on}\quad L^2(\R_+,r\,dr),
\end{equation}
where $M_{\xi^2}$ is multiplication by $\xi^2$ on its maximal
domain. By \eqref{eq:hankel-L}, $h_\gamma$ extends $L_\gamma$ on
$C_c^\infty(0,\infty)$. For $\gamma\ge0$, this is the
Friedrichs extension of $L_\gamma$. When $\gamma\ge1$, the
endpoint $r=0$ is in the limit point case, so no boundary
condition is imposed there. When $0\le\gamma<1$, the endpoint
is in the limit circle case and the Friedrichs domain selects
the regular behaviour $O(r^\gamma)$.

Now suppose that $0<\gamma<1$. The differential expressions
$L_\gamma$ and $L_{-\gamma}$ then coincide, but their
Friedrichs and Krein realizations are different. Every $v$ in
the maximal domain has an expansion
\begin{equation*}
  v(r)=a r^{-\gamma}+b r^\gamma+o(r),\qquad r\to0.
\end{equation*}
The Friedrichs domain of $h_\gamma$ imposes $a=0$, while the
Krein domain of $h_{-\gamma}$ imposes $b=0$. Thus the Krein
condition is a cancellation condition on the regular
coefficient; note that it is not sufficient to impose
$v(r)=O(r^{-\gamma})$ as $r\to0$, since
$r^\gamma=O(r^{-\gamma})$ as well, and the growth estimate
allows both terms and does not identify
the Krein domain.
These assertions follow after the unitary transform
$h(r)\mapsto r^{1/2}h(r)$ from the homogeneous inverse square
realizations in \cite[\S2.3]{DerezinskiRichard2017}; see also
\cite[Sections~3--4]{Fermi2024} for the Aharonov--Bohm
extensions.

\subsection{The Aharonov--Bohm Hamiltonian and
its wave operators}
In the angular decomposition \eqref{eq:angular} the free
Laplacian acts on the $k$th mode as $L_{|k|}$, and the
classical formula for the Fourier transform of
$e^{ik\theta}g(r)$ \cite[Ch.~IV, Thm.~3.10]{SteinWeiss1971}
shows that $-\Delta$ acts on the $k$th mode as $h_{|k|}$.
Likewise, the differential expression \eqref{eq:Halpha} acts on
$e^{ik\theta}g(r)$ as $e^{ik\theta}L_{|k+\alpha|}g$, and the
Friedrichs extension of \eqref{eq:Halpha} on
$C_c^\infty(\R^2\setminus\{0\})$ is
\begin{equation}\label{eq:Halpha-def}
  H_\alpha=\bigoplus_{k\in\Z}h_{\nu_k},\qquad
  \text{i.e.}\quad (H_\alpha f)_k=h_{\nu_k}f_k,
  \qquad
  \nu_{k}=|k+\alpha|,
\end{equation}
see \cite[\S2 and Appendix~A]{Ruijsenaars1983},
\cite[\S\S2--3]{PankrashkinRichard2011}; the only modes affected
by the choice of the selfadjoint extension are those with
$\nu_k<1$. In particular $H_\alpha\ge0$, $H_\alpha$ is unitarily
equivalent to $\bigoplus_kM_{\xi^2}$ and hence has purely
absolutely continuous spectrum $[0,\infty)$, and
\begin{equation}\label{eq:evolutions}
  (e^{itH_\alpha}f)_k=\HH_{\nu_k}e^{it\xi^2}\HH_{\nu_k}f_k,\qquad
  (e^{-itH_0}f)_k=\HH_{\mu_k}e^{-it\xi^2}\HH_{\mu_k}f_k,\qquad
  \mu_{k}=|k|.
\end{equation}

For the Krein extension we restrict to $0<\alpha<1$, which
identifies the two critical modes without an additional gauge
shift. For $k\in\Z$, set
\begin{equation}\label{eq:Krein-orders}
  \lambda_k=
  \begin{cases}
    -\alpha,&k=0,\\
    \alpha-1,&k=-1,\\
    \nu_k,&k\notin\{0,-1\}.
  \end{cases}
\end{equation}
Thus $-1<\lambda_k$ for every $k$, and the Krein extension of
the minimal Aharonov--Bohm operator is
\begin{equation}\label{eq:Krein-def}
  H_\alpha^{\rm K}=\bigoplus_{k\in\Z}h_{\lambda_k}.
\end{equation}
Indeed, the two critical Friedrichs summands $h_\alpha$ and
$h_{1-\alpha}$ are replaced by $h_{-\alpha}$ and
$h_{-(1-\alpha)}$, while every other summand is essentially
selfadjoint. Formula \eqref{eq:hgamma} shows that
$H_\alpha^{\rm K}$ is nonnegative and has purely absolutely
continuous spectrum $[0,\infty)$.

The following partial wave formula for the wave operators was
proved by Ruijsenaars \cite[Theorem~A1]{Ruijsenaars1983} for
$0<\alpha<1$. Since the proof for general $\alpha$ is short, we
include it.

\begin{proposition}[Friedrichs partial wave formula]
\label{prop:partial-wave}
Let $\alpha\in\R$. The strong limits \eqref{eq:waveop} exist,
$W_\pm$ are unitary on $L^2(\R^2)$, and
\begin{equation}\label{eq:partial-wave}
  (W_\pm f)_k=W_{\pm,k}f_k,\qquad
  W_{\pm,k}=e^{\mp i\delta_k}\,\HH_{\nu_k}\HH_{\mu_k},\qquad
  k\in\Z,
\end{equation}
with $\mu_k,\nu_k,\delta_k$ as in \eqref{eq:mudef}. Moreover
$W_\pm$ commute with rotations, hence with every $P_k$, and
$(W_\pm^*f)_k=e^{\pm i\delta_k}\HH_{\mu_k}\HH_{\nu_k}f_k$.
\end{proposition}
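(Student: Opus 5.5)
The proof reduces to one partial wave at a time. Both $H_\alpha$ and $H_0$ are diagonal in the decomposition \eqref{eq:angular}, with summands $h_{\nu_k}$ and $h_{\mu_k}$. By \eqref{eq:evolutions}, on the $k$th mode
\begin{equation*}
  (e^{itH_\alpha}e^{-itH_0}f)_k
  =\HH_{\nu_k}e^{it\xi^2}\HH_{\nu_k}\HH_{\mu_k}e^{-it\xi^2}\HH_{\mu_k}f_k .
\end{equation*}
Rotations act on mode $k$ as multiplication by $e^{ik\phi}$, so they commute with $e^{itH_\alpha}e^{-itH_0}$ for every $t$. This passes to strong limits, which gives the commutation with rotations and with each $P_k$. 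Since $\|f\|_{L^2}^2=\sum_k\|f_k\|^2$ and all the operators involved are uniformly bounded (indeed unitary), the $\varepsilon/3$ argument shows it suffices to prove the following: for each $k$ and each $g$ in a dense subset of $L^2(\R_+,r\,dr)$,
\begin{equation*}
  \HH_{\nu}e^{it\xi^2}\HH_{\nu}\HH_{\mu}e^{-it\xi^2}\HH_\mu g\longrightarrow e^{\mp i\delta}\HH_\nu\HH_\mu g .
\end{equation*}
Set $\varphi=\HH_\mu g$. The unitary $\HH_\nu$ can be stripped off the left, and the claim is equivalent to
\begin{equation*}
  \bigl\|\HH_\nu\HH_\mu(e^{-it\xi^2}\varphi)-e^{\mp i\delta}e^{-it\xi^2}\HH_\nu\HH_\mu\varphi\bigr\|_{L^2}\to0
\end{equation*}
for $\varphi$ in a dense class, say $C_c^\infty(0,\infty)$.

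The core step, and the main obstacle, is this asymptotic statement. I would handle it with the stationary phase picture. For $\varphi\in C_c^\infty(0,\infty)$, the free evolution $u_t=\HH_\mu(e^{-it\xi^2}\varphi)$ is, as $|t|\to\infty$, concentrated in the region $r\approx2|t|\xi$ with $\xi\in\operatorname{supp}\varphi$. Its $L^2$ mass on any fixed ball $r\le R$ tends to zero. One sees this from $|J_\mu(r\xi)|\lesssim1$ and integration by parts in $\xi$ (nonstationary phase), or simply from $e^{-it\xi^2}\varphi\rightharpoonup0$ together with compactness of $\ind_{r\le R}\HH_\mu\ind_{\operatorname{supp}\varphi}$, which is Hilbert--Schmidt. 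On the complementary region $r\ge R$, I would insert the asymptotics \eqref{eq:bessel-asym}. Then $J_\gamma(r\xi)$ equals $(2\pi r\xi)^{-1/2}\{e^{i(r\xi-\pi\gamma/2-\pi/4)}+e^{-i(\cdots)}\}$ up to an $O((r\xi)^{-3/2})$ error. Stationary phase in $\xi$ shows that, for $t\to+\infty$, only the outgoing term $e^{-i r\xi}$ resonates with $e^{-it\xi^2}$; the sign depends on $t$'s sign. Hence
\begin{equation*}
  u_t(r)\approx (2\pi r)^{-1/2}e^{\pm i(\pi\mu/2+\pi/4)}\int e^{\mp i r\xi-it\xi^2}\varphi(\xi)\,\xi^{1/2}\dd\xi ,
\end{equation*}
with an $L^2$ error that tends to zero; the other exponential contributes $o(1)$ by nonstationary phase. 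The same formula holds with $\mu$ replaced by $\nu$ and $\varphi$ by $\HH_\nu\HH_\mu\varphi$. This yields
\begin{equation*}
  \HH_\mu(e^{-it\xi^2}\varphi)-e^{\pm i\frac\pi2(\mu-\nu)}\HH_\nu(e^{-it\xi^2}\varphi)\to0 .
\end{equation*}
I need to check that this exponent matches $e^{\mp i\delta_k}$ after the rearrangement; the sign bookkeeping is the delicate point. A cleaner route that avoids the explicit $r$-side asymptotics would also suffice: work with the scattering of $J_\nu$ versus $J_\mu$ on the dense set, and apply stationary phase to the kernel after the change of variables.

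With this in hand, applying the unitary $\HH_\nu$ gives $W_{\pm,k}=e^{\mp i\delta_k}\HH_{\nu_k}\HH_{\mu_k}$. This operator is unitary on $L^2(\R_+,r\,dr)$ because each Hankel transform is a unitary involution. Summing over $k$, $W_\pm=\bigoplus_kW_{\pm,k}$ exists as a strong limit and is unitary. Finally, taking adjoints mode by mode and using $\HH_\gamma^*=\HH_\gamma$ gives $(W_\pm^*f)_k=e^{\pm i\delta_k}\HH_{\mu_k}\HH_{\nu_k}f_k$.
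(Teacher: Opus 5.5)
Your strategy is the paper's: reduce to one channel, dispose of $r\le R$ by local decay, and on $r\ge R$ insert \eqref{eq:bessel-asym} so that one exponential cancels between the two orders while the other is killed by nonstationary phase. Two steps, however, are wrong as written, and the second one changes the answer.

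First, the reduction is misstated. Since $\HH_\nu\HH_\mu g=\HH_\nu\varphi$, stripping $\HH_\nu$ and multiplying by $e^{-it\xi^2}$ gives $\HH_\nu\HH_\mu(e^{-it\xi^2}\varphi)-e^{\mp i\delta}e^{-it\xi^2}\varphi\to0$. Your version has $e^{-it\xi^2}\HH_\nu\HH_\mu\varphi$ in the second term instead, and the two statements together would force $\HH_\nu\HH_\mu\varphi=\varphi$. After applying $\HH_\nu$, the target is $\HH_\mu(e^{-it\xi^2}\varphi)-e^{\mp i\delta}\HH_\nu(e^{-it\xi^2}\varphi)\to0$ in $L^2(r\,dr)$, with the \emph{same} $\varphi$ in both terms; this is the paper's \eqref{eq:scalar-norm}. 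So drop the instruction to replace $\varphi$ by $\HH_\nu\HH_\mu\varphi$. That function is not compactly supported in $(0,\infty)$ anyway, so your asymptotic argument would not apply to it.

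Second, you select the wrong exponential. For $t\to+\infty$, the part of $J_\gamma(r\xi)$ that survives against $e^{-it\xi^2}$ is $e^{+ir\xi}$. The phase $r\xi-t\xi^2$ is stationary at $\xi=r/(2t)$, which is your own concentration region $r\approx2t\xi$. By contrast, $-r\xi-t\xi^2$ has $\xi$-derivative of modulus $r+2t\xi\ge r+2t\varepsilon$ on $\operatorname{supp}\varphi\subset(\varepsilon,N)$. Two integrations by parts bound its contribution by $Cr^{-1/2}(t\varepsilon+r)^{-2}$, which is $o(1)$ in $L^2((R,\infty),r\,dr)$.

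The coefficient of $e^{i(r\xi-\pi/4)}$ in $J_\gamma(r\xi)$ is $(2\pi r\xi)^{-1/2}e^{-i\pi\gamma/2}$, so it cancels exactly in $J_\mu(r\xi)-e^{-i\frac\pi2(\mu-\nu)}J_\nu(r\xi)$. This yields the phase $e^{-i\delta}$ as $t\to+\infty$, i.e.\ $W_{\pm,k}=e^{\mp i\delta_k}\HH_{\nu_k}\HH_{\mu_k}$. Your choice of $e^{-ir\xi}$ produces $e^{+i\delta}$ at $t\to+\infty$, hence the opposite phase $e^{\pm i\delta_k}$, contradicting \eqref{eq:partial-wave}. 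Since the phase is exactly what the statement asserts, the sign check you deferred is essential, not optional.

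Note also that no stationary phase evaluation is needed. Once the phase is chosen, the resonant parts cancel identically. Besides the nonresonant term, only the $O((r\xi)^{-3/2})$ remainders are left, and they are bounded by $CR^{-1/2}$ in $L^2((R,\infty),r\,dr)$ uniformly in $t$.

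The rest of your argument is fine: rotation invariance, the assembly over modes, the Hilbert--Schmidt local decay, and the adjoint formula.
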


\begin{proof}
By \eqref{eq:evolutions} and the unitarity of $\HH_{\mu_k}$,
\eqref{eq:partial-wave} follows from the scalar statement: for
$\mu,\nu\ge0$ and $d=\frac\pi2(\mu-\nu)$, the following limit
holds strongly on $L^2(\R_+,r\,dr)$:
\begin{equation}\label{eq:scalar}
  \slim_{t\to\pm\infty}
  \HH_\nu e^{it\xi^2}\HH_\nu\,\HH_\mu e^{-it\xi^2}\HH_\mu
  =e^{\mp id}\,\HH_\nu\HH_\mu.
\end{equation}
Right composition with the involution $\HH_\mu$ reduces
 \eqref{eq:scalar} to
\begin{equation*}
  \slim_{t\to\pm\infty}
  \HH_\nu e^{it\xi^2}\HH_\nu\HH_\mu e^{-it\xi^2}
  =e^{\mp id}\HH_\nu.
\end{equation*}
Applying the unitary operator $\HH_\nu e^{-it\xi^2}\HH_\nu$ to
the difference shows that it suffices to prove, on the dense
union of the spaces $C_c^\infty((\varepsilon,N))$,
\begin{equation}\label{eq:scalar-norm}
  \lim_{t\to\pm\infty}\bigl\|\Phi_t\bigr\|_{L^2(r\,dr)}=0,\qquad
  \Phi_t(r)\coloneq\int_0^\infty\bigl[J_\mu(r\xi)-e^{\mp
  id}J_\nu(r\xi)\bigr]
  e^{-it\xi^2}F(\xi)\,\xi\dd\xi,
\end{equation}
for every $F\in C_c^\infty((\varepsilon,N))$,
$0<\varepsilon<N<\infty$. We treat $t\to+\infty$; the case
$t\to-\infty$ is identical after exchanging the roles of the two
exponentials in \eqref{eq:bessel-asym}. 

Now fix $A\ge1/\varepsilon$. On $0<r<A$: for each $r$, $\Phi_t(r)\to0$ as $t\to\infty$ by the
Riemann--Lebesgue lemma (substitute $\lambda=\xi^2$), and
$|\Phi_t(r)|\le C_F$ uniformly, since $J_\mu,J_\nu$ are bounded
on $[0,AN]$. By dominated convergence,
$\int_0^A|\Phi_t|^2r\,dr\to0$.

On $r>A$ we have $r\xi\ge A\varepsilon\ge1$ on the support of
$F$, and we insert \eqref{eq:bessel-asym}. The coefficient of
$e^{i(r\xi-\pi/4)}$ in $J_\mu(r\xi)-e^{-id}J_\nu(r\xi)$ is
$$(2\pi
r\xi)^{-1/2}\bigl(e^{-i\pi\mu/2}-e^{-id}e^{-i\pi\nu/2}\bigr)=0$$
by the choice of $d$. Hence for some constant $\gamma$
\begin{equation*}
  \begin{split}
    \Phi_t(r)={}&\gamma\,r^{-1/2}\int_0^\infty
    e^{-i(t\xi^2+r\xi)}F(\xi)\xi^{1/2}\dd\xi
    +\int_0^\infty\bigl[R_\mu(r\xi)-e^{-id}R_\nu(r\xi)\bigr]
    e^{-it\xi^2}F(\xi)\xi\dd\xi\\
    =:{}&\Phi^{(1)}_t(r)+\Phi^{(2)}_t(r).
  \end{split}
\end{equation*}
By \eqref{eq:bessel-asym},
$|\Phi^{(2)}_t(r)|\le C_Fr^{-3/2}$, so that
$\int_A^\infty|\Phi^{(2)}_t|^2r\,dr\le C_F/A$ uniformly in $t$.
For $\Phi^{(1)}_t$ the phase $\phi(\xi)=t\xi^2+r\xi$ satisfies
$\phi'(\xi)=2t\xi+r\ge2t\varepsilon+r$ and $\phi''=2t$ on the
support of $F$; integrations by parts twice, using
$e^{-i\phi}=\frac{i}{\phi'}\partial_\xi e^{-i\phi}$ and
$|\phi''/\phi'^{\,2}|\le C_\varepsilon/\phi'$, give
$$|\Phi^{(1)}_t(r)|\le C_{F}\,r^{-1/2}(t\varepsilon+r)^{-2},$$
whence 
$$\int_A^\infty|\Phi^{(1)}_t|^2r\,dr\le
C_F\int_A^\infty(t\varepsilon+r)^{-4}dr \le
C_F(t\varepsilon)^{-3}\to0.$$

Altogether $\limsup_{t\to\infty}\|\Phi_t\|^2_{L^2(r\,dr)}\le
C_F/A$ for every $A$, which proves \eqref{eq:scalar-norm}. Thus
the channel limits exist and are the unitary operators
\eqref{eq:partial-wave}. For $f$ with finitely many nonzero
modes the limit \eqref{eq:waveop} therefore exists, and since
$e^{itH_\alpha}e^{-itH_0}$ are unitary, the limit extends to all
of $L^2(\R^2)$ and is unitary. The rotation invariance and the
formula for $W_\pm^*$ are immediate from \eqref{eq:partial-wave}
and $\HH_\gamma^*=\HH_\gamma$.
\end{proof}

\begin{proposition}[Krein partial wave formula]
\label{prop:Krein-partial-wave}
Let $0<\alpha<1$ and define
\begin{equation*}
  \delta_k^{\rm K}=\frac\pi2(\mu_k-\lambda_k).
\end{equation*}
The wave operators for $H_\alpha^{\rm K}$ exist, are unitary on
$L^2(\R^2)$, commute with rotations, and satisfy
\begin{equation}\label{eq:Krein-partial-wave}
  (W_\pm^{\rm K}f)_k
  =e^{\mp i\delta_k^{\rm K}}\HH_{\lambda_k}\HH_{\mu_k}f_k,
  \qquad
  ((W_\pm^{\rm K})^*f)_k
  =e^{\pm i\delta_k^{\rm K}}\HH_{\mu_k}\HH_{\lambda_k}f_k.
\end{equation}
\end{proposition}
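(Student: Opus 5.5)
The proof will follow that of Proposition~\ref{prop:partial-wave} almost verbatim, since nothing there depended on the orders being nonnegative beyond the facts about Bessel functions. By \eqref{eq:Krein-def} and \eqref{eq:hgamma}, $H_\alpha^{\rm K}$ acts on the $k$th mode as $\HH_{\lambda_k}M_{\xi^2}\HH_{\lambda_k}$, so
\begin{equation*}
  (e^{itH_\alpha^{\rm K}}f)_k=\HH_{\lambda_k}e^{it\xi^2}\HH_{\lambda_k}f_k .
\end{equation*}
Here $\HH_{\lambda_k}$ is a unitary involution of $L^2(\R_+,r\,dr)$ for all orders $\lambda_k>-1$, by the cited result of Derezi\'nski--Richard. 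For $k\notin\{0,-1\}$ we have $\lambda_k=\nu_k$ and $\delta_k^{\rm K}=\delta_k$, so those channels are literally covered by the scalar limit \eqref{eq:scalar}. It therefore remains to establish \eqref{eq:scalar} with $\nu$ replaced by $\lambda\in\{-\alpha,\alpha-1\}\subset(-1,0)$ and $\mu\in\{0,1\}$, with $d=\frac\pi2(\mu-\lambda)$.

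For this scalar statement I repeat the reduction to \eqref{eq:scalar-norm}. Using that $\HH_\mu$ and $\HH_\lambda$ are involutions and that $\HH_\lambda e^{-it\xi^2}\HH_\lambda$ is unitary, it suffices to show $\|\Phi_t\|_{L^2(r\,dr)}\to0$, where $\Phi_t$ is built from $J_\mu(r\xi)-e^{\mp id}J_\lambda(r\xi)$ and $F\in C_c^\infty((\varepsilon,N))$. The two Bessel facts needed are available for negative orders $\gamma>-1$:
\begin{itemize}
\item the bounds \eqref{eq:bessel-bounds}, which make $J_\lambda(r\xi)$ bounded by $C(r\xi)^{\lambda}$ near $0$;
\item the asymptotics \eqref{eq:bessel-asym}, stated there for all real $\gamma>-1$.
\end{itemize}

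The estimate then splits into the same two regions as before.
\begin{itemize}
\item On $0<r<A$, $J_\lambda(r\xi)$ is no longer bounded on $[0,AN]$. Since $\xi\ge\varepsilon$ on $\operatorname{supp}F$, we still get $|\Phi_t(r)|\le C_F(1+r^{\lambda})$. This majorant lies in $L^2((0,A),r\,dr)$ because $2\lambda+1>-1$. Riemann--Lebesgue gives pointwise decay of $\Phi_t(r)$, and dominated convergence gives $\int_0^A|\Phi_t|^2r\,dr\to0$.
\item On $r>A$, the coefficient of $e^{i(r\xi-\pi/4)}$ vanishes again by the choice of $d$. The stationary-phase integration by parts and the $r^{-3/2}$ remainder bound go through unchanged, giving $\limsup_{t\to\infty}\|\Phi_t\|^2\le C_F/A$.
\end{itemize}

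Once all channel limits exist, we argue on $f$ with finitely many modes, where the limit exists channelwise. The family $e^{itH_\alpha^{\rm K}}e^{-itH_0}$ is uniformly bounded (unitary), so the limit extends to all of $L^2(\R^2)$. The limit operator is the orthogonal sum of the unitaries $e^{\mp i\delta^{\rm K}_k}\HH_{\lambda_k}\HH_{\mu_k}$, hence unitary. It commutes with rotations because it acts diagonally on angular modes. The formula for the adjoint follows from $\HH_\gamma^*=\HH_\gamma$.

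The only genuinely new point, and the step to check carefully, is the small-$r$ domination for negative order. It rests on $2\lambda_k+1>-1$, i.e.\ on $\lambda_k>-1$, which holds since $0<\alpha<1$.
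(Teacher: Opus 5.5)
Your argument is correct, but it follows a different route from the paper.

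\textbf{The paper's route.} The paper does not rerun the stationary phase proof of Proposition~\ref{prop:partial-wave} in the two critical channels. It conjugates by the Liouville transform $h(r)\mapsto r^{1/2}h(r)$. It then quotes the scalar formula $\slim_{t\to\pm\infty}e^{ith_\lambda}e^{-ith_\mu}=e^{\mp i\frac\pi2(\mu-\lambda)}\HH_\lambda\HH_\mu$, valid for arbitrary real orders $\lambda,\mu>-1$, from Derezi\'nski--Richard (Theorem~4.10). This is applied with $(\lambda,\mu)=(-\alpha,0)$ and $(\alpha-1,1)$. The assembly of the orthogonal sum and the adjoint formula are then exactly as in your last paragraph.

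\textbf{Your route.} You prove that scalar limit directly. You observe that nonnegativity of the orders entered the earlier proof in only one place: the uniform bound on $J_\nu$ over $[0,AN]$ used for dominated convergence on $0<r<A$. You replace it by the majorant $C_F(1+r^{\lambda})$, which lies in $L^2((0,A),r\,dr)$ precisely because $\lambda>-1$. The region $r>A$ does not depend on the order, since \eqref{eq:bessel-asym} and its $x^{-3/2}$ remainder hold for every $\gamma>-1$. Likewise, the vanishing of the $e^{i(r\xi-\pi/4)}$ coefficient depends only on $d=\frac\pi2(\mu-\lambda)$.

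\textbf{Comparison.} The paper's route is shorter and delegates the whole scalar statement to the literature. Yours is self-contained apart from the $L^2$ unitarity and involution property of $\HH_\lambda$ for negative order, which both arguments take from Derezi\'nski--Richard. It also treats the Friedrichs and Krein cases with a single proof. In fact it shows that the argument of Proposition~\ref{prop:partial-wave} yields the scalar formula for any pair of orders in $(-1,\infty)$; if both orders are negative, the same majorant handles both Bessel functions.
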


\begin{proof}
Under the transform $h(r)\mapsto r^{1/2}h(r)$, the
scalar wave operator formula for the homogeneous inverse square
realizations of any two real orders $\lambda,\mu>-1$ is
\begin{equation*}
  \slim_{t\to\pm\infty}e^{ith_\lambda}e^{-ith_\mu}
  =e^{\mp i\frac\pi2(\mu-\lambda)}\HH_\lambda\HH_\mu;
\end{equation*}
see \cite[Theorem~4.10 and Eqs.~(4.22)--(4.24)]
{DerezinskiRichard2017}.
Apply this with $(\lambda,\mu)=(\lambda_k,\mu_k)$ in the two
critical channels.
In every other channel the assertion is
Proposition~\ref{prop:partial-wave}. The convergence of the
orthogonal sum follows first for vectors with finitely many
angular modes and then for all of $L^2$ because every
approximating operator is unitary. The formula for the adjoint
follows from the selfadjoint involution property of each Hankel
transform.
\end{proof}

\subsection{General angular scattering}

For a constant magnetic coefficient, write the angular
operator and its Friedrichs Hamiltonian as
\begin{equation}\label{eq:em-H}
  B_{\alpha,a}=(D_\theta+\alpha)^2+a(\theta),
  \qquad
  H_{\alpha,a}=-\partial_r^2-r^{-1}\partial_r
  +r^{-2}B_{\alpha,a}.
\end{equation}
Thus $H_{\alpha,0}=H_\alpha$. All angular operators below have
periodic boundary conditions on $\T$.

We now separate the general existence question from the
$L^p$ estimates. Let $B$ be a nonnegative selfadjoint
operator on $L^2(\T)$ with an orthonormal eigenbasis
$\{\psi_n\}$ and
eigenvalues $s_n^2$. Define the unitary involution
\begin{equation}\label{eq:em-K}
  \mathcal K_B\left(\sum_n f_n(r)\psi_n(\theta)\right)
  =\sum_n(\HH_{s_n}f_n)(r)\psi_n(\theta).
\end{equation}
The sums converge in $L^2(\R^2)$. The corresponding Friedrichs
operator $H_B$ is the orthogonal sum of the radial operators
$h_{s_n}$ and satisfies $H_B=\mathcal K_B r^2\mathcal K_B$.
In particular, its spectrum is purely absolutely continuous.

\begin{proposition}\label{prop:em-L2}
  For two such angular operators $B_0,B_1$, the ordinary
  wave operators exist and are unitary. More precisely,
  \begin{equation}\label{eq:em-L2}
    W_\pm(H_{B_1},H_{B_0})
    =\mathcal K_{B_1}e^{\pm i\pi\sqrt{B_1}/2}
    e^{\mp i\pi\sqrt{B_0}/2}\mathcal K_{B_0}.
  \end{equation}
  No relation between the two angular eigenbases is needed.
\end{proposition}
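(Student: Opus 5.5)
The proof will reduce \eqref{eq:em-L2} to the scalar channel limits, using the functional calculus of $B_0$ and $B_1$ to organize the phases. Write $H_j=H_{B_j}=\mathcal K_{B_j}r^2\mathcal K_{B_j}$, where $\mathcal K_{B_j}$ is the unitary involution \eqref{eq:em-K} built from an eigenbasis $\{\psi^{(j)}_n\}$ with eigenvalues $(s^{(j)}_n)^2$. Then $e^{itH_j}=\mathcal K_{B_j}e^{itr^2}\mathcal K_{B_j}$, so
\begin{equation*}
  e^{itH_1}e^{-itH_0}
  =\mathcal K_{B_1}e^{itr^2}\,\mathcal K_{B_1}\mathcal K_{B_0}\,e^{-itr^2}\mathcal K_{B_0}.
\end{equation*}
The key idea is to compare each $H_j$ with $-\Delta$ restricted to the same angular eigenbasis. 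Let $\mathcal F_j$ act on the $n$-th coefficient as $\HH_{s^{(j)}_n}$ composed with the channel map that returns to the free order. Concretely, by the scalar identity \eqref{eq:scalar} (whose proof in Proposition~\ref{prop:partial-wave} uses only $\mu,\nu\ge0$ and the Bessel asymptotics \eqref{eq:bessel-asym}), for any $s\ge0$ we have
\begin{equation*}
  \slim_{t\to\pm\infty}e^{ith_s}e^{-it\HH_0 r^2\HH_0}
  =e^{\mp i\frac\pi2(0-s)}\HH_s\HH_0 .
\end{equation*}
Any fixed reference order works equally well here. This suggests the cleaner route below, which passes through a single reference operator.

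\textbf{Step 1 (comparison with a reference dynamics).} Let $H_{\rm ref}=\mathcal K_{B_j}\,\HH_{1/2}$-type operators be avoided. Instead, use the following fact for each $j$: for $f=\sum_n f_n\psi^{(j)}_n$ with finitely many nonzero $f_n\in C_c^\infty$,
\begin{equation*}
  e^{itH_j}f=\sum_n(\HH_{s_n}e^{it\xi^2}\HH_{s_n}f_n)\psi^{(j)}_n .
\end{equation*}
Let $\mathcal D_t=e^{-itr^2}$-conjugated free radial dynamics of order $1/2$, i.e.\ $U_t=\HH_{1/2}e^{-it\xi^2}\HH_{1/2}$ acting radially and trivially in $\theta$. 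Since $U_t$ acts only on $r$, it commutes with every angular decomposition. By \eqref{eq:scalar} channelwise, the limit $\Omega_j^\pm=\slim e^{itH_j}U_t$ exists on finite sums. On each channel it equals $e^{\mp i\frac\pi2(1/2-s_n)}\HH_{s_n}\HH_{1/2}$, so
\begin{equation*}
  \Omega_j^\pm=\mathcal K_{B_j}e^{\pm i\pi\sqrt{B_j}/2}e^{\mp i\pi/4}\HH_{1/2},
\end{equation*}
with $\HH_{1/2}$ acting radially. Because the approximants are unitary and the finite sums are dense, the limit extends to $L^2$. The limit is unitary because it is a product of unitaries.

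\textbf{Step 2 (chain rule).} Write $e^{itH_1}e^{-itH_0}=(e^{itH_1}U_t)(e^{itH_0}U_t)^*$. Since $e^{itH_0}U_t\to\Omega_0^\pm$ strongly and these operators are unitary, their adjoints converge strongly as well: $\|(V_t^*-V^*)g\|=\|(V-V_t)V^*g\|$ up to a unitary factor. Hence
\begin{equation*}
  W_\pm(H_1,H_0)=\Omega_1^\pm(\Omega_0^\pm)^*.
\end{equation*}
Substituting the formula from Step 1, the factors $e^{\mp i\pi/4}\HH_{1/2}$ cancel, because $\HH_{1/2}$ is a radial unitary involution commuting with the functional calculus of $B_0$. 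This leaves exactly \eqref{eq:em-L2}. No relation between the two eigenbases is used, since the bases only enter through $\Omega_j^\pm$ separately.

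\textbf{Main obstacle.} The main point is justifying the channelwise limit for general $s\ge0$ against the reference order $1/2$, and then summing over infinitely many channels. The scalar limit is exactly \eqref{eq:scalar}, with the roles $\nu=s_n$, $\mu=1/2$. The density and unitarity argument from the end of Proposition~\ref{prop:partial-wave} handles the infinite sum, because no uniformity in $n$ is required.
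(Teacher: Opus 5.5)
Your argument is correct, but it takes a different route from the paper.

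\textbf{The paper's route.} It works directly with the pair. It expands a single $B_0$-mode $fe_m$ in the $B_1$-eigenbasis using the overlaps $C_{nm}=\int_\T e_m\overline{\phi_n}\,d\theta$. It then applies the scalar limit \eqref{eq:scalar} to each term $e^{ith_{\nu_n}}e^{-ith_{\mu_m}}f$. The infinitely many output channels are handled by the tail bound $\|f\|_2^2\sum_{n>N}|C_{nm}|^2$, which is uniform in $t$. Finally it identifies the limiting matrix entries $C_{nm}e^{\pm i\pi(\nu_n-\mu_m)/2}\HH_{\nu_n}\HH_{\mu_m}$ with \eqref{eq:em-L2}.

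\textbf{Your route.} You insert the auxiliary dynamics $U_t=(\HH_{1/2}e^{-it\xi^2}\HH_{1/2})\otimes I$. This is exactly $e^{-itH_B}$ for the scalar angular operator $B=\frac14$, so it is diagonal in every angular basis at once. Each $\Omega_j^\pm=\slim e^{itH_j}U_t$ can then be computed in the single eigenbasis of $B_j$, on finite sums, with no tail estimate. The chain rule gives \eqref{eq:em-L2} as the product $\Omega_1^\pm(\Omega_0^\pm)^*$. Your justification of the strong convergence of the adjoints from $VV^*=I$ is correct. The factors $e^{\mp i\pi/4}\HH_{1/2}$ cancel simply because they are adjacent and $\HH_{1/2}^2=I$; no commutation with the functional calculus of $B_0$ is needed.

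\textbf{What each buys.} Your route is shorter. It makes both the factorized form and the independence from the two eigenbases structural, rather than something checked entry by entry. The paper's route produces the explicit mode-mixing matrix, which it uses in the discussion right after the proof, where it notes that the coefficients $C_{nm}$ mix the angular modes.

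In a final write-up you should delete the abandoned false starts: the sentence saying $H_{\rm ref}$-type operators are to be avoided, and the undefined objects $\mathcal D_t$ and $\mathcal F_j$.
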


\begin{proof}
  Choose normalized eigenvectors $e_m$ of $B_0$ and $\phi_n$
  of $B_1$, with eigenvalues $\mu_m^2$ and $\nu_n^2$, and set
  $C_{nm}=\int_\T e_m\overline{\phi_n}\,d\theta$. For
  $f\in L^2(\R_+,r\,dr)$, separation of variables gives
  \begin{equation*}
    e^{itH_{B_1}}e^{-itH_{B_0}}(fe_m)
    =\sum_n C_{nm}
    \bigl(e^{ith_{\nu_n}}e^{-ith_{\mu_m}}f\bigr)\phi_n.
  \end{equation*}
  On each term, the scalar limit \eqref{eq:scalar} is
  \begin{equation*}
    \slim_{t\to\pm\infty}
    e^{ith_{\nu_n}}e^{-ith_{\mu_m}}
    =e^{\pm i\pi(\nu_n-\mu_m)/2}
    \HH_{\nu_n}\HH_{\mu_m}.
  \end{equation*}
  The squared norm of the tail after $n=N$ is exactly
  $\|f\|_2^2\sum_{n>N}|C_{nm}|^2$, uniformly in $t$; the
  limiting series has the same tail bound. Truncate the sum,
  take the scalar limits, and then let $N\to\infty$.
  This proves convergence on a single input angular mode,
  and hence on finite sums. Unitarity of the evolutions and
  density extend the limit to all of $L^2$.

  The resulting matrix formula is precisely
  \eqref{eq:em-L2}. Its four factors are unitary, so the
  limit is unitary and its range is the whole space. Reversing
  $B_0,B_1$ gives its adjoint and inverse.
\end{proof}

This formula explains why the spectral intertwiner (without
phase) is not, in general, an ordinary wave operator. Even if the
angular eigenbases coincide, scattering introduces the phase
$e^{\pm i\pi(\nu_k-\mu_k)/2}$. For example, take
$\alpha=1/4$ and $a=1/2$. In the mode $k=0$, the magnetic
order is $1/4$ and the perturbed order is $3/4$, so
\begin{equation*}
  W_\pm(H_{1/4,1/2},H_{1/4})\big|_{k=0}
  =e^{\pm i\pi/4}\HH_{3/4}\HH_{1/4}.
\end{equation*}
For nonconstant $a$, the overlap coefficients $C_{nm}$ also
mix the angular modes.

\section{The Mellin symbol}\label{sec:mellin}

The operators $\HH_\nu\HH_\mu$ commute with dilations, hence are
Mellin multipliers. The following proposition identifies the
multiplier and, crucially for $p\ne2$, the strip of analyticity
on which the identity holds.

\begin{proposition}[Mellin symbol]\label{prop:symbol}
Let $\mu\ge0$ and let $\nu>-1$ be real. Define
\begin{equation}\label{eq:symbol}
  m_{\nu,\mu}(z)
  =\frac{\Gamma\bigl(\frac{\nu+z}{2}\bigr)
    \Gamma\bigl(\frac{\mu+2-z}{2}\bigr)}
  {\Gamma\bigl(\frac{\nu+2-z}{2}\bigr)
    \Gamma\bigl(\frac{\mu+z}{2}\bigr)},
  \qquad z\in S_{\nu,\mu}\coloneq\{-\nu<\Re z<\mu+2\}.
\end{equation}
Then $m_{\nu,\mu}$ is holomorphic and bounded on every closed
substrip of $S_{\nu,\mu}$. For $h\in C_c^\infty(0,\infty)$, the
function $f=\HH_\nu\HH_\mu h$ is smooth on $(0,\infty)$,
satisfies $|f(r)|\le C_{c,h}\,r^{-c}$ for every
$c\in(-\nu,\mu+2)$, and
\begin{equation}\label{eq:symbol-id}
  \Mell(\HH_\nu\HH_\mu h)(z)=m_{\nu,\mu}(z)\,\Mell h(z),\qquad
  z\in S_{\nu,\mu},
\end{equation}
with absolutely convergent integrals. Consequently, for $1\le
p\le\infty$ with $2/p\in S_{\nu,\mu}$,
\begin{equation}\label{eq:symbol-fourier}
  \widehat{\mathcal U_p(\HH_\nu\HH_\mu h)}(\tau)
  =m_{\nu,\mu}\bigl(\tfrac2p-i\tau\bigr)\,\widehat{\mathcal
  U_ph}(\tau),
  \qquad\tau\in\R.
\end{equation}
\end{proposition}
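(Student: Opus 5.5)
We will compute the Mellin transforms of $g=\HH_\mu h$ and of $f=\HH_\nu g$ separately, using the Bessel Mellin formula \eqref{eq:bessel-mellin}. The identity \eqref{eq:symbol-id} will then come from a Mellin--Parseval type factorization, with the strip of validity tracked carefully.

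\textbf{Step 1 (holomorphy and boundedness of $m_{\nu,\mu}$).} On $S_{\nu,\mu}$ the numerator arguments satisfy $\Re\frac{\nu+z}2>0$ and $\Re\frac{\mu+2-z}2>0$. Hence the numerator has no poles there. The reciprocal Gamma factors in the denominator are entire. Boundedness on closed substrips $\{a\le\Re z\le b\}$ follows from Stirling's formula $|\Gamma(x+iy)|\sim\sqrt{2\pi}|y|^{x-1/2}e^{-\pi|y|/2}$. The modulus of $m_{\nu,\mu}$ then behaves like
\[
|y/2|^{\frac{\nu+x}2-\frac12+\frac{\mu+2-x}2-\frac12-\frac{\nu+2-x}2+\frac12-\frac{\mu+x}2+\frac12}=|y/2|^{0},
\]
and the exponential factors cancel as well. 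Together with continuity on compact pieces, this gives uniform boundedness.

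\textbf{Step 2 (Mellin transform of $g=\HH_\mu h$).} Since $h\in C_c^\infty(0,\infty)$, the function $g(\xi)=\int J_\mu(r\xi)h(r)r\,dr$ is smooth. It satisfies $|g(\xi)|\lesssim\xi^\mu$ near $0$, by \eqref{eq:bessel-bounds}, and $g$ decays faster than any power at infinity. The decay follows by integrating by parts with $L_\mu$, using \eqref{eq:hankel-L}: $\xi^{2N}g=\HH_\mu(L_\mu^Nh)$, and this is bounded. Hence $\Mell g(w)$ converges absolutely for $\Re w>-\mu$. For $-\mu<\Re w<\frac12$ we may apply Fubini and \eqref{eq:bessel-mellin} to get $\Mell g(w)=j_\mu(w)\,\Mell h(2-w)$. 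Both sides are analytic, so the identity extends to all $\Re w>-\mu$, since $\Mell h$ is entire. Note that $j_\mu(w)$ extends meromorphically, and in fact holomorphically for $\Re w>-\mu$.

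\textbf{Step 3 (the outer transform and the pointwise bounds).} Now $f(r)=\int_0^\infty J_\nu(r\xi)g(\xi)\xi\,d\xi$. Since $g$ is rapidly decaying and $O(\xi^\mu)$ at $0$, $f$ is smooth with $|f(r)|\lesssim r^\nu$ near $0$. For the behaviour at infinity, I would represent $f$ by Mellin inversion along a vertical line:
\[
f(r)=\frac1{2\pi i}\int_{\Re z=c} m_{\nu,\mu}(z)\Mell h(z)\,r^{-z}\,dz .
\]
This is derived first for $c$ in the range where Fubini applies, namely $-\nu<c<\min(\mu+2,\tfrac32)$ roughly, where $\xi^{-z}$-type integrals converge. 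From it one gets $|f(r)|\le C r^{-c}$ by shifting $c$ throughout $(-\nu,\mu+2)$. The shift is legitimate because $m_{\nu,\mu}\Mell h$ is holomorphic in $S_{\nu,\mu}$ and decays rapidly in $|\Im z|$: $\Mell h$ is Schwartz on vertical lines and $m_{\nu,\mu}$ is bounded there by Step 1. The multiplier itself is derived by composing the two Bessel Mellin formulas:
\[
\Mell f(z)=j_\nu(z)\,\Mell g(2-z)=j_\nu(z)\,j_\mu(2-z)\,\Mell h(z).
\]
Simplifying the Gamma factors and the powers of $2$ ($2^{z-1}2^{1-z}=1$) gives exactly $m_{\nu,\mu}(z)$. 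Once $|f|\le Cr^{-c}$ is known for all $c$ in the strip, $\Mell f$ converges absolutely on $S_{\nu,\mu}$, and analytic continuation yields \eqref{eq:symbol-id}. Finally, \eqref{eq:symbol-fourier} is \eqref{eq:symbol-id} at $z=2/p-i\tau$ combined with \eqref{eq:mellin-fourier}.

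\textbf{Main obstacle.} The delicate point is justifying the Fubini exchange in Step 3. Here $J_\nu$ is only conditionally Mellin-integrable for $\Re z\ge\frac12$, and $\nu$ may be negative. So the initial identity must be proved in a thin strip, for example $-\nu<\Re z<\frac12$, intersected with the range where $\xi^{1-z}g(\xi)$ is integrable. This range is nonempty since $\nu>-1$ and $\mu\ge0$. Everything else then follows by analytic continuation plus contour shifting. If the thin strip proves awkward, I would instead insert a damping factor $e^{-\epsilon\xi}$, compute the transform, and let $\epsilon\to0$.
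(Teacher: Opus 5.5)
Your Steps 1 and 2, and the Fubini-then-contour-shift scheme of Step 3, follow the paper's argument. They work whenever the strip $-\nu<\Re z<\tfrac12$ is nonempty, i.e.\ for $\nu>-\tfrac12$. The gap is your claim that this strip is nonempty ``since $\nu>-1$ and $\mu\ge0$''. For $-1<\nu\le-\tfrac12$ one has $-\nu\ge\tfrac12$, so the strip is empty. Absolute convergence of $\int_0^\infty t^{\Re z-1}|J_\nu(t)|\,dt$ requires $\Re z>-\nu$ at the origin and $\Re z<\tfrac12$ at infinity. So for these orders there is no line on which Fubini applies to the outer transform, and the identity $\Mell f(z)=j_\nu(z)\,\Mell g(2-z)$ is never established. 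Everything you build on it falls with it: the inverse Mellin representation of $f$, the bounds $|f(r)|\le Cr^{-c}$, and \eqref{eq:symbol-id}. The upper limit $\tfrac32$ you quote in Step 3 is only the edge of \emph{conditional} convergence of $j_\nu$, where Fubini is not available. Your fallback does not reach the obstruction either. Damping by $e^{-\epsilon\xi}$ acts in the frequency variable, while the divergence is in $\int^\infty r^{z-1}J_\nu(r\xi)\,dr$ as $r\to\infty$. Nor is this a marginal case: the Krein channels use the order $-\eta_\alpha\le-\tfrac12$.

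The paper avoids Fubini altogether for $-1<\nu<0$. It takes the $L^2$ identity on the critical line, $\widehat{\mathcal U_2f}(\tau)=m_{\nu,\mu}(1-i\tau)\widehat{\mathcal U_2h}(\tau)$, from the scalar dilation formula of Derezi\'nski--Richard. It then identifies $f$ with the continuous function $\frac1{2\pi i}\int_{\Re z=1}m_{\nu,\mu}(z)\Mell h(z)r^{-z}\,dz$, using Mellin--Plancherel and the pointwise absolute convergence of the Hankel integral. Only then does it shift contours across $S_{\nu,\mu}$.

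A self-contained repair inside your framework uses the recurrence $J_\nu(x)=\frac{2(\nu+1)}{x}J_{\nu+1}(x)-J_{\nu+2}(x)$. It gives $f=2(\nu+1)f_1-\HH_{\nu+2}g$ with $f_1(r)=r^{-1}\int_0^\infty J_{\nu+1}(r\xi)g(\xi)\,d\xi$.
\begin{itemize}
\item Fubini gives $\Mell f_1(z)=j_{\nu+1}(z-1)\,\Mell g(2-z)$ with absolute convergence for $-\nu<\Re z<\tfrac32$, which is a nonempty strip.
\item $\HH_{\nu+2}g$ has order $\nu+2>1$, so your nonnegative-order argument covers it on all of $S_{\nu+2,\mu}$.
\item The Gamma identity $2(\nu+1)j_{\nu+1}(z-1)-j_{\nu+2}(z)=j_\nu(z)$ then recovers $m_{\nu,\mu}(z)\Mell h(z)$ on $-\nu<\Re z<\tfrac32$.
\end{itemize}
After that, your continuation step finishes the proof.
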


\begin{remark}
The strip $S_{\nu,\mu}$ always contains the line $\Re z=1$, and
it contains the interval $(0,2)$ when $\mu,\nu\ge0$. Moreover,
$2\in S_{\nu,\mu}$ if and only if $\mu>0$, and $0\in
S_{\nu,\mu}$ if and only if $\nu>0$. On the line $\Re z=1$ the
symbol has modulus one, as it must, since $\HH_\nu\HH_\mu$ is
unitary. Writing $z=1-i\tau$ in \eqref{eq:symbol} one recovers
the wave operator representation of
\cite{Richard2009,PankrashkinRichard2011} after inserting the
phase in \eqref{eq:partial-wave}.
\end{remark}

\begin{proof}
\emph{Step 1: analyticity and bounds.} In terms of
\eqref{eq:Ga}, interpreting the Gamma quotients meromorphically
where necessary,
\begin{equation}\label{eq:symbol-G}
  m_{\nu,\mu}(z)=\frac{G_a(\zeta_1)}{G_a(\zeta_2)},\qquad
  a=\frac{\nu-\mu}2,\quad \zeta_1=\frac{\mu+z}{2},\quad
  \zeta_2=\frac{\mu+2-z}{2}.
\end{equation}
For $z\in S_{\nu,\mu}$ the two numerator arguments $(\nu+z)/2$
and $(\mu+2-z)/2$ have positive real part. They therefore avoid
the poles of $\Gamma$, while the reciprocal Gamma factors in the
denominator are entire; hence $m_{\nu,\mu}$ is holomorphic on
$S_{\nu,\mu}$. Notice that the denominator arguments need not
have positive real part, and their poles simply give zeros of
$m_{\nu,\mu}$. On a closed substrip, boundedness for $|\Im
z|\le1$ follows by compactness. For $|\Im z|\ge1$, the uniform
vertical Stirling asymptotic \cite[Eq.~5.11.9]{NIST} applies to
all four factors. The exponential factors cancel, and so do the
total powers of $|\Im z|$, since the sums of the numerator and
denominator real parts agree. Thus $|m_{\nu,\mu}(z)|\le C$
uniformly on every closed substrip.

\emph{Step 2: the first Hankel transform.} Let $h\in
C_c^\infty(0,\infty)$ and $g=\HH_\mu h$. By \eqref{eq:hankel-L},
$\xi^{2N}g=\HH_\mu(L_\mu^Nh)$ is bounded for every $N$, so $g$
is smooth and rapidly decreasing at infinity, and by the first
bound in \eqref{eq:bessel-bounds}, $|g(\xi)|\le C\xi^\mu$. Hence
$\Mell g(w)$ converges absolutely and is holomorphic for $\Re
w>-\mu$. For $-\mu<\Re w<\frac12$ the double integral
$\int_0^\infty\int_0^\infty\xi^{\Re
w-1}|J_\mu(r\xi)||h(r)|\,r\,dr\,d\xi$ is finite by
\eqref{eq:bessel-bounds}, and Fubini's theorem together with
\eqref{eq:bessel-mellin} gives
\begin{equation}\label{eq:Mg}
  \Mell g(w)=\int_0^\infty h(r)\,r^{1-w}\Bigl(\int_0^\infty
  t^{w-1}J_\mu(t)\dd t\Bigr)dr
  =j_\mu(w)\,\Mell h(2-w).
\end{equation}
The right hand side is holomorphic for $\Re w>-\mu$ ($\Mell h$
is entire and $\Gamma(\frac{\mu+w}2)$ has its poles at
$w=-\mu-2j$), so \eqref{eq:Mg} holds for all $\Re w>-\mu$.

\emph{Step 3: a nonnegative target order.} Assume first that
$\nu\ge0$ and let $f=\HH_\nu g$. For $-\nu<\Re z<\frac12$ the
double integral
\begin{equation*}
    \int_0^\infty\!\int_0^\infty
    r^{\Re z-1}|J_\nu(r\xi)||g(\xi)|\xi\dd\xi\dd r
    =\int_0^\infty |g(\xi)|\xi^{1-\Re z}
    \left(\int_0^\infty t^{\Re z-1}|J_\nu(t)|\dd t\right)\dd\xi
\end{equation*}
is finite: the inner integral converges by
\eqref{eq:bessel-bounds}, and the outer one converges at $0$
because $|g(\xi)|\le C\xi^\mu$ and $\Re z<\frac12<\mu+2$, and at
infinity because $g$ is rapidly decreasing. Fubini's theorem,
\eqref{eq:bessel-mellin} and \eqref{eq:Mg} yield, with
absolutely convergent integrals,
\begin{equation}\label{eq:Mf-left}
\begin{split}
  \Mell f(z)&=j_\nu(z)\,\Mell g(2-z)\\&=j_\nu(z)\,j_\mu(2-z)\,\Mell
  h(z)
  =m_{\nu,\mu}(z)\,\Mell h(z),\quad -\nu<\Re z<\tfrac12.
  \end{split}
\end{equation}

\emph{Step 4: continuation to the strip.} Put
$F(z)=m_{\nu,\mu}(z)\Mell h(z)$. Integrating by parts $N$ times
in \eqref{eq:mellin}, $\Mell
h(z)=\frac{(-1)^N}{z(z+1)\cdots(z+N-1)}\int_0^\infty
h^{(N)}(r)r^{z+N-1}dr$, so $\Mell h$ is rapidly decreasing on
vertical lines, uniformly on compact $\Re z$ intervals; by
Step~1, so is $F$ on closed substrips of $S_{\nu,\mu}$. For
$c\in(-\nu,\mu+2)$ define
\begin{equation*}
  \tilde f_c(r)=\frac1{2\pi i}\int_{\Re z=c}F(z)\,r^{-z}\dd z.
\end{equation*}
By Cauchy's theorem and the decay of $F$, $\tilde f_c$ does not
depend on $c$; call it $\tilde f$. It is smooth on $(0,\infty)$
and $|\tilde f(r)|\le C_cr^{-c}$ for every $c\in(-\nu,\mu+2)$.
On the other hand, for $c\in(-\nu,\frac12)$, Step~3 shows that
$r^cf(r)$ is integrable with respect to $dr/r$, i.e.\ $u\mapsto
e^{cu}f(e^u)$ belongs to $L^1(\R)$, and that its Fourier
transform is $\tau\mapsto F(c-i\tau)$, which is in $L^1(\R)$.
Fourier inversion for $L^1$ functions with integrable transform
gives $f=\tilde f_c$ almost everywhere, hence everywhere, as
both sides are continuous ($f$ is continuous by dominated
convergence). Therefore $f=\tilde f$. For a fixed
$c\in(-\nu,\mu+2)$, choose $d_-,d_+\in(-\nu,\mu+2)$ with
$d_-<c<d_+$. The estimate with exponent $d_-$ on $(0,1)$ and the
estimate with exponent $d_+$ on $(1,\infty)$ show that $r^cf(r)$
is integrable with respect to $dr/r$. It is the inverse Fourier
transform of the integrable function $F(c-i\cdot)$; thus its
Fourier transform is $F(c-i\cdot)$, which is
\eqref{eq:symbol-id} on the line $\Re z=c$. Finally
\eqref{eq:symbol-fourier} is \eqref{eq:symbol-id} rewritten
through \eqref{eq:mellin-fourier}.

\emph{Step 5: a negative target order.} It remains to consider
$-1<\nu<0$. Let $f=\HH_\nu\HH_\mu h$. The scalar dilation
formula \cite[Eq.~(4.24)]{DerezinskiRichard2017}, after removing
its scattering phase and applying the Liouville transform
$u \mapsto r^{1/2}u(r)$, says precisely that
\begin{equation}\label{eq:negative-L2-symbol}
  \widehat{\mathcal U_2f}(\tau)
  =m_{\nu,\mu}(1-i\tau)\widehat{\mathcal U_2h}(\tau).
\end{equation}
Put again $F(z)=m_{\nu,\mu}(z)\Mell h(z)$. By Step~1 and
integration by parts in $\Mell h$, $F$ is rapidly decreasing on
vertical lines, uniformly on closed substrips of $S_{\nu,\mu}$.
Mellin--Plancherel and \eqref{eq:negative-L2-symbol} therefore
identify $f$ in $L^2$ with
\begin{equation*}
  \widetilde f(r)=\frac1{2\pi i}\int_{\Re z=1}F(z)r^{-z}\dd z.
\end{equation*}
The integral defining $\HH_\nu\HH_\mu h(r)$ is absolutely
convergent for each $r>0$: near zero its integrand is
$O(\xi^{\mu+\nu+1})$, and at infinity $\HH_\mu h$ is rapidly
decreasing. Compact truncations of $\HH_\mu h$ identify this
integral with the $L^2$ Hankel transform, and by dominated
differentiation on compact $r$ intervals we see that $f$ is
smooth there. Thus the $L^2$ identity with the continuous
function $\widetilde f$ holds everywhere.

Since $F$ is holomorphic and rapidly decreasing on every closed
substrip, Cauchy's theorem moves the last contour to any line
$c\in(-\nu,\mu+2)$ without changing its value. Consequently
$|f(r)|\le C_{c,h}r^{-c}$ throughout this strip. Given a fixed
$c$, choose $d_-<c<d_+$ in the same strip. The bounds with $d_-$
near zero and $d_+$ near infinity show that $r^cf(r)$ is
integrable with respect to $dr/r$. Fourier inversion on the line
$\Re z=c$ now gives \eqref{eq:symbol-id}, with absolute
convergence, and \eqref{eq:symbol-fourier} follows as before.
\end{proof}

We shall also need the behaviour of the Gamma quotient
\eqref{eq:Ga} in the right half plane, in a form uniform in the
parameter.

\begin{lemma}[Gamma quotients]\label{lem:gamma}
  Let $\mathcal A\subset\R$ be compact and let $\varepsilon>0$.
  For $a\in\mathcal A$ set $\Omega_a=\{\zeta\in\C:\
  \Re\zeta\ge\varepsilon,\ \Re(\zeta+a)\ge\varepsilon\}$. There is
  $C=C(\mathcal A,\varepsilon)$ such that, for every $a\in\mathcal
  A$ and $\zeta\in\Omega_a$,
  \begin{equation}\label{eq:G-size}
    C^{-1}|\zeta|^a\le|G_a(\zeta)|\le C|\zeta|^a.
  \end{equation}
  Furthermore, $\psi=\Gamma'/\Gamma$ satisfies
  \begin{equation}\label{eq:G-deriv}
    |\psi(\zeta+a)-\psi(\zeta)|\le\frac C{1+|\zeta|},\qquad
    |\psi'(\zeta+a)-\psi'(\zeta)|\le\frac C{(1+|\zeta|)^2}.
  \end{equation}
  Moreover, with the principal branch
  of $\zeta^{a}$, uniformly for $a\in\mathcal A$,
  \begin{equation}\label{eq:G-asym}
    G_a(\zeta)=\zeta^a\bigl(1+O(|\zeta|^{-1})\bigr),\qquad
    |\zeta|\to\infty,\ \zeta\in\Omega_a.
  \end{equation}
\end{lemma}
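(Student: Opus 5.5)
The proof rests on the Binet-type integral representation of $\log\Gamma$ on the right half plane, together with the uniform Stirling expansion. Since the region is $\Re\zeta\ge\varepsilon$ and $\Re(\zeta+a)\ge\varepsilon$, both $\zeta$ and $\zeta+a$ stay in a sector-free half plane bounded away from the poles of $\Gamma$. I would split into a bounded region $|\zeta|\le R$ and a far region $|\zeta|\ge R$, with $R=R(\mathcal A,\varepsilon)$ large.

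\emph{Bounded region.} Here the set $\{(a,\zeta): a\in\mathcal A,\ \zeta\in\Omega_a,\ |\zeta|\le R\}$ is compact. On it $\Gamma(\zeta)$ and $\Gamma(\zeta+a)$ are finite and nonzero, since $\Gamma$ has no zeros and its poles are excluded. Also $|\zeta|\ge\varepsilon$. So $G_a$, $|\zeta|^a$ and their reciprocals are continuous and bounded, and likewise $\psi(\zeta+a)-\psi(\zeta)$ and $\psi'(\zeta+a)-\psi'(\zeta)$. This gives \eqref{eq:G-size} and \eqref{eq:G-deriv} there, and \eqref{eq:G-asym} is vacuous.

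\emph{Far region.} For $\Re w>0$ I would use
\begin{equation*}
  \log\Gamma(w)=(w-\tfrac12)\log w-w+\tfrac12\log(2\pi)+\varpi(w),\qquad
  |\varpi(w)|\le\frac{C_\varepsilon}{|w|}\quad(\Re w\ge\varepsilon,\ |w|\ge1).
\end{equation*}
This bound is uniform because $\Re w\ge\varepsilon$ keeps $|\arg w|\le\pi/2$, which lies inside every sector $|\arg w|\le\pi-\delta$ where Stirling's bound is uniform; see \cite[Eq.~5.11.1 and 5.11.3]{NIST}. Subtracting the expansions at $w=\zeta+a$ and $w=\zeta$ gives
\begin{equation*}
  \log G_a(\zeta)=a\log\zeta+(\zeta+a-\tfrac12)\log(1+a/\zeta)-a+O(|\zeta|^{-1}).
\end{equation*}
Since $|a/\zeta|\le\max|\mathcal A|/R\le\frac12$, Taylor expansion gives $(\zeta+a-\frac12)\log(1+a/\zeta)=a+O(|\zeta|^{-1})$ uniformly in $a$. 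One must check that the principal branches are consistent, i.e.\ that $\log(\zeta+a)-\log\zeta=\log(1+a/\zeta)$ with no $2\pi i$ jump. This holds because both arguments lie in the right half plane, so the difference of arguments is less than $\pi$. Exponentiating yields \eqref{eq:G-asym}. Then \eqref{eq:G-size} follows from $|\zeta^a|=|\zeta|^a e^{-a\cdot 0}$: $a$ is real, so $|\zeta^a|=|\zeta|^a$ exactly.

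\emph{Derivative bounds.} For \eqref{eq:G-deriv} I would use $\psi(w)=\log w-\frac1{2w}+O(|w|^{-2})$ and $\psi'(w)=\frac1w+O(|w|^{-2})$, uniformly on $\Re w\ge\varepsilon$, $|w|\ge1$. The leading terms differ by $\log(1+a/\zeta)=O(|\zeta|^{-1})$ and by $\frac1{\zeta+a}-\frac1\zeta=O(|\zeta|^{-2})$ respectively. An alternative route is to differentiate the bound on $\varpi$ via Cauchy's estimates on discs of radius $\varepsilon/2$. This needs the Stirling bound on the slightly larger region $\Re w\ge\varepsilon/2$, which is available by the same reasoning.

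\emph{Main obstacle.} The only delicate point is the uniformity of the Stirling remainders up to the boundary $\Re w=\varepsilon$ as $\Im w\to\pm\infty$. I would handle it by invoking the sectorial uniform version of Stirling's formula, since the closed right half plane is strictly inside the admissible sector.
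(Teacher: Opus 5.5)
Your proof is correct and follows essentially the same route as the paper: uniform Stirling-type expansions in the right half plane handle $|\zeta|\ge R$, and continuity on the compact set $\{(a,\zeta):a\in\mathcal A,\ \zeta\in\Omega_a,\ |\zeta|\le R\}$ handles the rest. The only difference is that you derive \eqref{eq:G-asym} from the Stirling formula for $\log\Gamma$, with an explicit branch check, whereas the paper cites the Gamma-ratio asymptotic \cite[Eq.~5.11.12]{NIST} directly; your version has the small advantage of making the uniformity in $a\in\mathcal A$ transparent.
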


\begin{proof}
$\Omega_a$ is contained in $\{|\arg\zeta|<\pi/2\}$, where the
expansions $G_a(\zeta)=\zeta^a(1+O(|\zeta|^{-1}))$,
$\psi(\zeta)=\log\zeta-\frac1{2\zeta} +O(|\zeta|^{-2})$ and
$\psi'(\zeta)=\frac1\zeta+O(|\zeta|^{-2})$ hold uniformly as
$|\zeta|\to\infty$ and $a$ ranges over a compact set
\cite[Eqs.~5.11.2, 5.11.12, 5.15.8]{NIST}. This gives
\eqref{eq:G-asym}, and also
\eqref{eq:G-size}--\eqref{eq:G-deriv} for $|\zeta|\ge R(\mathcal
A,\varepsilon)$, since
$\psi(\zeta+a)-\psi(\zeta)=\log(1+a/\zeta)+O(|\zeta|^{-2})$. On
the compact set $\{(a,\zeta):a\in\mathcal A,\ \zeta\in\Omega_a,\
|\zeta|\le R\}$ the Gamma arguments stay away from their poles,
so $G_a$, $1/G_a$, and the two differences of digamma functions
are continuous. This gives the remaining bounds uniformly in
$a$.
\end{proof}

\section{Aharonov--Bohm weighted bounds}
\label{sec:interior}

\subsection{Reduction to a multiplier on the cylinder}
Fix $1<p<\infty$, $\beta\in\R$, and put
\begin{equation}\label{eq:c-weight}
  c=\frac{2+\beta}{p}.
\end{equation}
Let
\begin{equation}\label{eq:fulliso}
  (\mathcal U_{p,\beta}f)(u,\theta)=e^{cu}f(e^u,\theta),
\end{equation}
then it is an isometric isomorphism of $L^p_\beta$ onto $L^p(\R\times\T)$.
Let $\mathcal D$ be the space of functions $f=\sum_{|k|\le
N}e^{ik\theta}h_k(r)$ with $h_k\in C_c^\infty(0,\infty)$, which
is dense in $L^p_\beta\cap L^2(\R^2)$ for the norm
$\|\cdot\|_{L^p_\beta}+\|\cdot\|_2$. Indeed, one first truncates
to an annulus, then mollifies in the logarithmic radial variable
and uses angular Fej\'er means; all three operations converge
simultaneously in the two norms. 

For a bounded function $M$ on
$\R\times\Z$ we denote by $T_M$ the Fourier multiplier on
$L^2(\R\times\T)$, $\widehat{T_Mg}(\tau,k)=M(\tau,k)\widehat
g(\tau,k)$, where
\begin{equation*}
  \widehat g(\tau,k)=(2\pi)^{-1/2}
  \int_{\R\times\T}e^{-iu\tau-ik\theta}g(u,\theta)\dd
  u\dd\theta,
\end{equation*}
and we say that $M$ is an $L^p(\R\times\T)$ multiplier if $T_M$
is bounded on $L^p(\R\times\T)$.

Suppose for now that
\begin{equation}\label{eq:c-strip}
  -\rho_\alpha<c<2.
\end{equation}
In fact the common Mellin strip is exactly
\begin{equation}\label{eq:common-strip}
  \bigcap_{k\in\Z}S_{\nu_k,\mu_k}
  =\left(-\inf_{k\in\Z}\nu_k,\,2+\inf_{k\in\Z}\mu_k\right)
  =(-\rho_\alpha,2),
\end{equation}
because $\inf_k\nu_k=\rho_\alpha$ and $\inf_k\mu_k=0$. Thus
$\nu_k\ge\rho_\alpha$ and $\mu_k\ge0$ imply that $c\in
S_{\nu_k,\mu_k}$ for every $k$.
Proposition~\ref{prop:partial-wave}, the Mellin identity
\eqref{eq:symbol-id}, and $\widehat{e^{cu}h(e^u)}(\tau)=\Mell
h(c-i\tau)$ therefore give
\begin{equation}\label{eq:conjugation}
  \mathcal U_{p,\beta}W_\pm f=T_{M_{c,\pm}}\,\mathcal
  U_{p,\beta}f,
  \qquad f\in\mathcal D,
\end{equation}
where
\begin{equation}\label{eq:joint-mult}
  M_{c,\pm}(\tau,k)=e^{\mp i\delta_k}\,m_{\nu_k,\mu_k}(c-i\tau)
  =e^{\mp i\delta_k}\,
  \frac{\Gamma\bigl(\frac{\nu_k+c-i\tau}{2}\bigr)
    \Gamma\bigl(\frac{\mu_k+2-c+i\tau}{2}\bigr)}
  {\Gamma\bigl(\frac{\nu_k+2-c+i\tau}{2}\bigr)
    \Gamma\bigl(\frac{\mu_k+c-i\tau}{2}\bigr)}.
\end{equation}
Consequently, if $M_{c,\pm}$ is an $L^p(\R\times\T)$ multiplier,
then $W_\pm$ is bounded on $L^p_\beta$: the estimate follows
first on $\mathcal D$ from \eqref{eq:conjugation}. More
explicitly, if $f_n\in\mathcal D$ converges to $f\in
L^p_\beta\cap L^2$ in both norms, the multiplier estimate makes
$W_\pm f_n$ Cauchy in $L^p_\beta$, while $L^2$ unitarity gives
$W_\pm f_n\to W_\pm f$ in $L^2$. A subsequence converges almost
everywhere in both senses, so the two limits agree and the
weighted estimate holds for $W_\pm f$.

For each fixed $k$, the function $\tau\mapsto M_{c,\pm}(\tau,k)$
is smooth and bounded, and
\begin{equation}\label{eq:fixed-mode-mikhlin}
  |\partial_\tau M_{c,\pm}(\tau,k)|
  \le C_{c,k,\alpha}(1+|\tau|)^{-1}.
\end{equation}
Indeed, boundedness follows from Proposition~\ref{prop:symbol};
the derivative bound follows from the vertical Stirling
expansions for the Gamma and digamma functions when
$|\tau|\ge1$, and by compactness when $|\tau|\le1$. Thus every
fixed channel is an $L^p(\R)$ multiplier. These estimates alone
are not uniform enough to control the angular sum, so we next
establish joint regularity in $(\tau,k)$ by extending the large
mode symbol to $\R^2$.

\subsection{Two classical multiplier theorems}
We use the following two results.

\begin{theorem}[Marcinkiewicz;
{\cite[\S6.2]{Grafakos2014}}]\label{thm:marcinkiewicz}
Let $m\in L^\infty(\R^2)$ be of class $C^2$ on each open
quadrant. Suppose that, for $a,b\in\{0,1\}$,
\begin{equation}\label{eq:marcinkiewicz}
  |\xi_1|^a|\xi_2|^b
  |\partial_{\xi_1}^a\partial_{\xi_2}^b m(\xi_1,\xi_2)|\le B
\end{equation}
whenever $\xi_1\xi_2\ne0$. Then $m$ is an $L^p(\R^2)$ Fourier
multiplier for every $1<p<\infty$, with norm bounded by $C_pB$.
\end{theorem}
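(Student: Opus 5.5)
This is the classical Marcinkiewicz multiplier theorem, and we would follow the standard Littlewood--Paley route as in \cite[\S6.2]{Grafakos2014}. First we reduce to a single quadrant: multiplying $m$ by the indicator of a quadrant amounts to composing $T_m$ with two one-dimensional Riesz projections (Fourier multipliers $\ind_{\xi_1>0}$, $\ind_{\xi_2>0}$ and their complements). These are bounded on $L^p(\R^2)$ for $1<p<\infty$ by the boundedness of the Hilbert transform in each variable. So we may assume that $m$ is supported in the closed first quadrant and is $C^2$ in its interior.

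Next we introduce the product dyadic decomposition. Let $\Delta_{j,l}$ be the sharp frequency projection onto the rectangle $R_{j,l}=[2^j,2^{j+1})\times[2^l,2^{l+1})$. The sharp product Littlewood--Paley theorem gives
\begin{equation*}
  \|f\|_p\simeq\Bigl\|\Bigl(\sum_{j,l}|\Delta_{j,l}f|^2\Bigr)^{1/2}\Bigr\|_p,
  \qquad 1<p<\infty.
\end{equation*}
This follows by iterating the one-dimensional sharp Littlewood--Paley inequality in each variable, which is legitimate because the one-dimensional operators extend to $\ell^2$-valued functions. It therefore suffices to prove
\begin{equation*}
  \Bigl\|\Bigl(\sum_{j,l}|T_m\Delta_{j,l}f|^2\Bigr)^{1/2}\Bigr\|_p
  \le C_pB\,\Bigl\|\Bigl(\sum_{j,l}|\Delta_{j,l}f|^2\Bigr)^{1/2}\Bigr\|_p .
\end{equation*}

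The key step is a representation of $m$ on each rectangle. On $R_{j,l}$, with lower corner $(a,b)=(2^j,2^l)$, the fundamental theorem of calculus in both variables gives
\begin{equation*}
  m(\xi)=m(a,b)+\int_a^{\xi_1}\partial_1m(s,b)\,ds+\int_b^{\xi_2}\partial_2m(a,t)\,dt
  +\int_a^{\xi_1}\!\!\int_b^{\xi_2}\partial_1\partial_2m(s,t)\,dt\,ds .
\end{equation*}
Writing $\ind_{s<\xi_1}$ and $\ind_{t<\xi_2}$ as multipliers, each term becomes an average of sharp partial-rectangle projections $S_{s,t}$ (multipliers $\ind_{[s,\infty)\times[t,\infty)}$, or their one-dimensional versions) applied to $\Delta_{j,l}f$. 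The weights are $|\partial_1m(s,b)|\,ds$, $|\partial_2m(a,t)|\,dt$ and $|\partial_1\partial_2m(s,t)|\,ds\,dt$. By \eqref{eq:marcinkiewicz}, with $a,b$ now denoting the corner, their total masses on $R_{j,l}$ are at most $CB$, since $\int_{2^j}^{2^{j+1}}ds/s=\log2$. Cauchy--Schwarz against these weights then gives the pointwise bound
\begin{equation*}
  |T_m\Delta_{j,l}f|^2\le CB\int\!\!\int|S_{s,t}\Delta_{j,l}f|^2\,d\sigma_{j,l}(s,t)
\end{equation*}
for a measure $\sigma_{j,l}$ of mass $\lesssim B$, and similarly for the lower-dimensional terms. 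The corner term is trivially bounded by $B|\Delta_{j,l}f|$.

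The main obstacle is the last step: controlling the square function of these projections uniformly in the parameters. We need the vector-valued estimate
\begin{equation*}
  \Bigl\|\Bigl(\sum_k|S_{s_k,t_k}g_k|^2\Bigr)^{1/2}\Bigr\|_p\le C_p\Bigl\|\Bigl(\sum_k|g_k|^2\Bigr)^{1/2}\Bigr\|_p
\end{equation*}
for arbitrary sequences $(s_k,t_k)$. We would obtain it by writing each one-dimensional half-line projection as a modulated Hilbert transform, $e^{is x}(I+iH)e^{-isx}/2$, and invoking the $\ell^2$-valued boundedness of the Hilbert transform (Marcinkiewicz--Zygmund), applied once in each variable. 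The bound then transfers to the measure averages via Minkowski's integral inequality after discretizing, or by duality with $L^{p/2}$ when $p\ge2$ and by duality in the remaining case. Combined with the product Littlewood--Paley equivalence, this yields $\|T_mf\|_p\le C_pB\|f\|_p$.
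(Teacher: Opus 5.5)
Your proposal is correct. The paper gives no proof of its own and only cites Grafakos \S6.2; your outline matches the standard proof given there: the sharp product Littlewood--Paley decomposition, the fundamental theorem of calculus on each dyadic rectangle with Cauchy--Schwarz against the derivative weights, and a uniform vector-valued bound for rectangle partial-sum operators obtained from modulated Hilbert transforms. One small imprecision: with only first-quadrant rectangles, the square-function equivalence you display holds only for functions whose Fourier support lies in that quadrant. After your reduction this is exactly what you use, namely the upper bound for general $f$ and the lower bound for $T_mf$.
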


\begin{theorem}[de Leeuw; {\cite{deLeeuw1965},
\cite{Saeki1970}}]\label{thm:deleeuw}
Let $m:\R^2\to\C$ be bounded and continuous, and an $L^p(\R^2)$
Fourier multiplier for some $1<p<\infty$. Then its restriction
$(\tau,k)\mapsto m(\tau,k)$ to $\R\times\Z$ is an
$L^p(\R\times\T)$ multiplier, with norm not exceeding that of
$m$.
\end{theorem}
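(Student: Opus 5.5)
The plan is the classical Gaussian-lifting argument of de~Leeuw, carried out by duality. Let $\|m\|$ denote the $L^p(\R^2)$ multiplier norm of $m$, and write $\mathcal P$ for the class of functions on $\R\times\T$ of the form $\sum_{|k|\le N}e^{ik\theta}\varphi_k(u)$ with $\varphi_k$ Schwartz. Since $\mathcal P$ is dense in $L^p(\R\times\T)$ and in $L^{p'}(\R\times\T)$, it suffices to prove
\begin{equation*}
  \Bigl|\sum_{k}\int_\R m(\tau,k)\,\widehat g(\tau,k)\,\overline{\widehat h(\tau,k)}\dd\tau\Bigr|
  \le \|m\|\,\|g\|_{L^p(\R\times\T)}\|h\|_{L^{p'}(\R\times\T)},
  \qquad g,h\in\mathcal P.
\end{equation*}

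To do this, I would regard $g,h$ as $2\pi$-periodic functions of $y\in\R$ and lift them to $\R^2$ with Gaussian cutoffs matched to the exponents:
\begin{equation*}
  G_\varepsilon(u,y)=g(u,y)\,e^{-\pi\varepsilon y^2/p},\qquad
  H_\varepsilon(u,y)=h(u,y)\,e^{-\pi\varepsilon y^2/p'} .
\end{equation*}
The $L^p(\R^2)$ multiplier bound for $m$ gives
\begin{equation*}
  \Bigl|\int_{\R^2} m\,\widehat G_\varepsilon\,\overline{\widehat H_\varepsilon}\Bigr|
  \le\|m\|\,\|G_\varepsilon\|_p\|H_\varepsilon\|_{p'} .
\end{equation*}

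\emph{Norm asymptotics.} We have $\|G_\varepsilon\|_p^p=\int_\R\int_\R|g(u,y)|^pe^{-\pi\varepsilon y^2}\dd y\dd u$. Because $|g|^p$ is periodic in $y$, a Riemann-sum (equidistribution) argument shows that $\varepsilon^{1/2}\|G_\varepsilon\|_p^p\to c_0\|g\|_{L^p(\R\times\T)}^p$, where $c_0=(2\pi)^{-1}\int_\R e^{-\pi s^2}\dd s$ is a universal constant. The same holds for $H_\varepsilon$ with $p'$ in place of $p$. Since $\frac1p+\frac1{p'}=1$, the product $\|G_\varepsilon\|_p\|H_\varepsilon\|_{p'}$ is therefore asymptotic to $c_0\,\varepsilon^{-1/2}\|g\|_p\|h\|_{p'}$.

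\emph{Fourier side.} The Fourier transform in $y$ of $e^{iky}e^{-\pi\varepsilon y^2/p}$ is a Gaussian of width $\sim\varepsilon^{1/2}$ and height $\sim\varepsilon^{-1/2}$, centered at $k$. Consequently $\widehat G_\varepsilon(\tau,\eta)$ is a finite sum over $k$ of $\widehat g(\tau,k)$ times such bumps, and similarly for $\widehat H_\varepsilon$. In the pairing $\int m\,\widehat G_\varepsilon\overline{\widehat H_\varepsilon}$:
\begin{itemize}
\item[(a)] the cross terms $k\ne l$ involve products of Gaussians centered at distinct integers, so they are $O(e^{-c/\varepsilon})$ times $\varepsilon^{-1}$, which is negligible;
\item[(b)] each diagonal term equals $\varepsilon^{-1/2}c_0$ times a Gaussian average of $m(\tau,\cdot)$ near $\eta=k$, integrated against $\widehat g(\tau,k)\overline{\widehat h(\tau,k)}$.
\end{itemize}
After multiplying by $\varepsilon^{1/2}$, boundedness and continuity of $m$ together with dominated convergence in $\tau$ (note $\widehat g(\cdot,k)$ and $\widehat h(\cdot,k)$ are Schwartz) make the diagonal terms converge to $c_0\int m(\tau,k)\widehat g(\tau,k)\overline{\widehat h(\tau,k)}\dd\tau$. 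The normalizing constants on the two sides match because the exponents $\frac1p+\frac1{p'}=1$ combine exactly.

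Multiplying the $\R^2$ inequality by $\varepsilon^{1/2}$ and letting $\varepsilon\to0$ then gives the desired bound with constant $\|m\|$, which proves the theorem.

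The main obstacle is the bookkeeping of the constants. One must check that the asymptotic constants on the Fourier side and on the norm side are the same $c_0$ for every $p$, including the $2\pi$ normalizations coming from the definition of $\widehat g$ on $\R\times\T$; this is where the choice of exponents $1/p$ and $1/p'$ in the Gaussians is essential. If that computation becomes messy, the fallback is to first prove the result for $m$ that is continuous with compact support in $\eta$ and then pass to the limit. Continuity of $m$ is used exactly once, to evaluate $m$ on $\R\times\Z$ as the limit of the Gaussian averages; this is why Theorem~\ref{thm:deleeuw} requires $m$ to be continuous rather than merely bounded.
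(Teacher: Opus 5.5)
Your argument is correct, but it does not follow the paper, which gives no proof. The paper cites the statement as the closed-subgroup form of de~Leeuw's theorem, \cite{deLeeuw1965} and \cite[Corollary~4.6]{Saeki1970}. That result restricts continuous $L^p$ multipliers to an arbitrary closed subgroup of the dual of a locally compact abelian group; here the subgroup is $\R\times\Z\subset\R^2$, with dual $\R\times\T$.

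You instead run the classical Gaussian-weight proof of de~Leeuw's periodization theorem (as in \cite{SteinWeiss1971,Grafakos2014}), with the cutoff only in the periodic variable. The $\R$ variable is carried along by dominated convergence: in $u$ on the norm side, using $\sup_y|g(u,y)|\le\sum_k|\varphi_k(u)|\in L^p(du)$, and in $\tau$ on the Fourier side. The citation buys full generality at no cost on the page. Your route is self-contained, produces the constant $\|m\|$ explicitly, and shows that only continuity of $\eta\mapsto m(\tau,\eta)$ at the integers is used.

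On the bookkeeping you flagged, your displayed target and claim (b) are right if both Fourier transforms are unitary. With the paper's convention $\widehat g(\tau,k)=(2\pi)^{-1/2}\int e^{-iu\tau-ik\theta}g$, however, one has $\int_{\R\times\T}T_Mg\,\overline h\,du\,d\theta=(2\pi)^{-1}\sum_k\int M\,\widehat g\,\overline{\widehat h}\,d\tau$. So the target as written is off by a factor $2\pi$; test $m\equiv1$, $p=2$.

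It is cleaner to compare $\varepsilon^{1/2}\int_{\R^2}T_mG_\varepsilon\,\overline{H_\varepsilon}$ with $\int_{\R\times\T}T_mg\,\overline h$ directly. Write $g=\sum_k e^{iky}\varphi_k(u)$ and $h=\sum_l e^{ily}\psi_l(u)$, and set $\widehat{\varphi_k}(\tau)=\int e^{-iu\tau}\varphi_k(u)\,du$. Then $\widehat{G_\varepsilon}(\tau,\eta)=\sum_k\widehat{\varphi_k}(\tau)\,\gamma_{\varepsilon/p}(\eta-k)$ with $\gamma_a(\xi)=a^{-1/2}e^{-\xi^2/(4\pi a)}$. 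Everything reduces to the identity
\[
  \int_\R\gamma_{\varepsilon/p}(\eta-k)\,\gamma_{\varepsilon/p'}(\eta-l)\,d\eta
  =2\pi\,\varepsilon^{-1/2}e^{-(k-l)^2/(4\pi\varepsilon)},
\]
obtained by completing the square and using $p+p'=pp'$. It gives (a) with the sharper bound $O(\varepsilon^{-1/2}e^{-1/(4\pi\varepsilon)})$. For $k=l$ it gives a $p$-independent total mass, so that
\[
  \varepsilon^{1/2}\int_{\R^2}T_mG_\varepsilon\overline{H_\varepsilon}\to(2\pi)^{-1}\int_{\R\times\T}T_mg\,\overline h .
\]
This matches exactly $\varepsilon^{1/2}\|G_\varepsilon\|_p\|H_\varepsilon\|_{p'}\to(2\pi)^{-1}\|g\|_p\|h\|_{p'}$, which is your $c_0$. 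The fallback is therefore unnecessary.
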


The mixed restriction used here is a standard closed subgroup
form of de~Leeuw's theorem; see \cite{deLeeuw1965} and
\cite[Corollary~4.6]{Saeki1970}. We also recall that the
indicator of a half line, $k\mapsto\ind_{\{k\ge K\}}$, is an
$L^p(\T)$ multiplier for $1<p<\infty$ (M.~Riesz's theorem, see
\cite[Ch.~4]{Grafakos2014}), and that multipliers on
$\R\times\T$ depending only on $\tau$, or only on $k$, are
bounded on $L^p(\R\times\T)$ as soon as they are bounded on
$L^p(\R)$, resp.\ $L^p(\T)$, by Fubini's theorem.

\subsection{Uniform extension of the symbol}
Let $K\in\N$ be an integer with $K\ge|\alpha|+|c|+3$. For
$\sigma\in\{+1,-1\}$ set
\begin{equation}\label{eq:aux}
  a_\sigma=\frac{\sigma\alpha}{2},\qquad
  d_\sigma=-\frac{\sigma\pi\alpha}{2},\qquad
  w_x=\frac{x+c-i\tau}{2},\qquad
  v_x=\frac{x+2-c+i\tau}{2},
\end{equation}
and, for $\tau\in\R$ and real $x\ge K-1$,
\begin{equation}\label{eq:F}
  F_{\sigma,\pm}(\tau,x)=e^{\mp
  id_\sigma}\,\frac{G_{a_\sigma}(w_x)}{G_{a_\sigma}(v_x)}.
\end{equation}
Since $x\ge K-1$ and $K\ge|\alpha|+|c|+3$, the real parts of
$w_x,w_x+a_\sigma,v_x,v_x+a_\sigma$ are all $\ge1$, so
Lemma~\ref{lem:gamma} applies with $\mathcal
A=\{-\alpha/2,\alpha/2\}$ and $\varepsilon=1$. For $k\ge K$ we
have $\mu_k=k$, $\nu_k=k+\alpha$, $\delta_k=-\pi\alpha/2=d_+$,
and for $k\le-K$ we have $\mu_k=-k$, $\nu_k=-k-\alpha$,
$\delta_k=\pi\alpha/2=d_-$; comparing with \eqref{eq:joint-mult}
we obtain the exact identities
\begin{equation}\label{eq:exact-tail}
  M_{c,\pm}(\tau,k)=F_{+,\pm}(\tau,k)\quad(k\ge K),\qquad
  M_{c,\pm}(\tau,k)=F_{-,\pm}(\tau,-k)\quad(k\le-K).
\end{equation}

\begin{lemma}\label{lem:mixed}
For $0\le a+b\le2$, $\tau\in\R$ and $x\ge K-1$,
\begin{equation}\label{eq:mixed-est}
  |\partial_\tau^a\partial_x^bF_{\sigma,\pm}(\tau,x)|
  \le C_{c,\alpha}\,(1+|\tau|+x)^{-a-b}.
\end{equation}
\end{lemma}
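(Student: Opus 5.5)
The plan is to write $F_{\sigma,\pm}=e^{\mp id_\sigma}\exp\Phi$ with
\begin{equation*}
  \Phi(\tau,x)=\log G_{a_\sigma}(w_x)-\log G_{a_\sigma}(v_x),
\end{equation*}
and to reduce all derivative bounds to bounds on $\Phi$. Since $w_x,v_x$ are affine in $(\tau,x)$ with $\partial_\tau w_x=-i/2$, $\partial_x w_x=1/2$, $\partial_\tau v_x=i/2$, $\partial_x v_x=1/2$, each first derivative of $\Phi$ is a constant linear combination of
\begin{equation*}
  \psi(w_x+a_\sigma)-\psi(w_x)\quad\text{and}\quad\psi(v_x+a_\sigma)-\psi(v_x).
\end{equation*}
Likewise, each second derivative is a linear combination of the corresponding differences of $\psi'$. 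Lemma~\ref{lem:gamma} applies with $\mathcal A=\{\pm\alpha/2\}$ and $\varepsilon=1$, as already noted before the statement. By \eqref{eq:G-deriv}, it gives $|\partial\Phi|\le C(1+|w_x|)^{-1}+C(1+|v_x|)^{-1}$ and $|\partial^2\Phi|\le C(1+|w_x|)^{-2}+C(1+|v_x|)^{-2}$.

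Next I compare $|w_x|$ and $|v_x|$ with $1+|\tau|+x$. Both have real part $\ge1$ and at least of size $(x+c)/2$ and $(x+2-c)/2$, which for $x\ge K-1\ge|c|+2$ are comparable to $x$. Both have imaginary part $\pm\tau/2$. Hence $|w_x|\simeq|v_x|\simeq 1+|\tau|+x$, with constants depending only on $c$ and $\alpha$.

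For the case $a+b=0$, I use \eqref{eq:G-size}: $|G_{a_\sigma}(w_x)|/|G_{a_\sigma}(v_x)|\le C|w_x|^{a_\sigma}|v_x|^{-a_\sigma}\le C$ by this comparability, so $F$ is bounded. Equivalently, $\Re\Phi$ is bounded above. For $a+b=1$, I use $\partial F=F\,\partial\Phi$ to get $|\partial F|\le C(1+|\tau|+x)^{-1}$. For $a+b=2$, I use $\partial^2F=F\,(\partial^2\Phi+(\partial\Phi)^2)$, and both terms are $O((1+|\tau|+x)^{-2})$.

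The only real subtlety is justifying $\log G$ and the chain rule. I will avoid choosing branches by differentiating $G$ directly: $\partial_\zeta G_a(\zeta)=G_a(\zeta)\bigl(\psi(\zeta+a)-\psi(\zeta)\bigr)$, which holds since all Gamma arguments have real part $\ge1$ and so no poles or zeros occur. Then $F'/F$ is expressed through digamma differences without any logarithm, and the same computation goes through. The mixed derivative $\partial_\tau\partial_x$ is handled identically, since both variables enter only through $w_x$ and $v_x$. I expect no step to be a genuine obstacle; the main care is the uniform comparability $|w_x|\simeq 1+|\tau|+x$, which is where the choice $K\ge|\alpha|+|c|+3$ is used.
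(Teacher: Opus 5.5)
Your proposal is correct and follows the paper's proof essentially step for step. You establish the comparability $|w_x|\simeq|v_x|\simeq1+|\tau|+x$, bound $F_{\sigma,\pm}$ via \eqref{eq:G-size}, and write each derivative as $F_{\sigma,\pm}$ times digamma differences (or their products), controlled by \eqref{eq:G-deriv}; the paper takes a local branch of $\log G_{a_\sigma}$ where you use the logarithmic derivative $G_a'/G_a=\psi(\cdot+a)-\psi$ directly, which is an inessential difference.
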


\begin{proof}
We have $|w_x|\simeq|v_x|\simeq1+|\tau|+x$, so \eqref{eq:G-size}
gives $|F_{\sigma,\pm}|\le C(|w_x|/|v_x|)^{a_\sigma}\le C$.
Locally, $F_{\sigma,\pm}=e^{\mp
id_\sigma}\exp(\Lambda(w_x)-\Lambda(v_x))$ with $\Lambda$ a
branch of $\log G_{a_\sigma}$, whose derivatives
$\Lambda'=\psi(\cdot+a_\sigma)-\psi$ and
$\Lambda''=\psi'(\cdot+a_\sigma)-\psi'$ are single valued. Since
$\partial_\tau w_x=-i/2$, $\partial_\tau v_x=i/2$,
$\partial_xw_x=\partial_xv_x=1/2$, every first derivative of
$F_{\sigma,\pm}$ is $F_{\sigma,\pm}$ times a linear combination
of $\Lambda'(w_x)$ and $\Lambda'(v_x)$, and every second
derivative is $F_{\sigma,\pm}$ times a linear combination of
$\Lambda''(w_x)$, $\Lambda''(v_x)$ and products of two first
order terms. Now \eqref{eq:G-deriv} gives the claim.
\end{proof}

Fix $\varrho\in C^\infty(\R)$ with $\varrho(\eta)=\eta$ for
$\eta\ge K$, $\varrho\ge K-1$ everywhere,
$\varrho(\eta)\simeq1+|\eta|$, and $\varrho'$ bounded (a
smoothing of $\eta\mapsto K+|\eta-K|$ on $[K-1,K]$ will do), and
define
\begin{equation}\label{eq:extension}
  Q_{+,\pm}(\tau,\eta)=F_{+,\pm}(\tau,\varrho(\eta)),\qquad
  Q_{-,\pm}(\tau,\eta)=F_{-,\pm}(\tau,\varrho(-\eta)),\qquad
  (\tau,\eta)\in\R^2.
\end{equation}
Then $Q_{\pm,\pm}\in C^\infty(\R^2)$, and since
$1+|\tau|+\varrho(\pm\eta)\simeq1+|\tau|+|\eta|$, the chain rule
and Lemma~\ref{lem:mixed} give, for $a,b\in\{0,1\}$,
\begin{equation}\label{eq:Q-est}
  |\partial_\tau^a\partial_\eta^b Q_{\sigma,\pm}(\tau,\eta)|
  \le C_{c,\alpha}(1+|\tau|+|\eta|)^{-a-b}.
\end{equation}
By \eqref{eq:exact-tail},
\begin{equation}\label{eq:Q-tail}
  Q_{+,\pm}(\tau,k)=M_{c,\pm}(\tau,k)\ \ (k\ge K),\qquad
  Q_{-,\pm}(\tau,k)=M_{c,\pm}(\tau,k)\ \ (k\le-K).
\end{equation}

\subsection{The weighted multiplier bound}

\begin{proposition}\label{prop:weighted-suff}
Let $\alpha\in\R$, $1<p<\infty$, and $\beta\in\R$. If
$c=(2+\beta)/p$ satisfies $-\rho_\alpha<c<2$, then $W_\pm$ is
bounded on $L^p_\beta$.
\end{proposition}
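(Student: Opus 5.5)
By the reduction \eqref{eq:conjugation} and the density argument that follows it, it is enough to show that $M_{c,\pm}$ in \eqref{eq:joint-mult} is an $L^p(\R\times\T)$ multiplier whenever $-\rho_\alpha<c<2$. I will split the symbol according to the angular frequency,
\begin{equation*}
  M_{c,\pm}=\ind_{\{k\ge K\}}\,Q_{+,\pm}
  +\ind_{\{k\le -K\}}\,Q_{-,\pm}
  +\sum_{|k|<K}\ind_{\{k\}}\,M_{c,\pm}(\cdot,k).
\end{equation*}
This identity holds on $\R\times\Z$ because of \eqref{eq:Q-tail}. Each of the three pieces will be treated separately.

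\emph{Finitely many low modes.} For each $|k|<K$, the operator with symbol $\ind_{\{k\}}(j)M_{c,\pm}(\tau,k)$ is $P_k$ composed with the one-dimensional multiplier $\tau\mapsto M_{c,\pm}(\tau,k)$, acting in the $u$ variable. Since $c\in S_{\nu_k,\mu_k}$ by \eqref{eq:common-strip}, this function is bounded, and it satisfies the Mikhlin bound \eqref{eq:fixed-mode-mikhlin}. So it is an $L^p(\R)$ multiplier. The projection $P_k$ is an $L^p(\T)$ multiplier, and by Fubini the product is bounded on $L^p(\R\times\T)$. There are only $2K-1$ such terms, with $K$ depending on $\alpha$ and $c$, so their sum is bounded.

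\emph{The two tails.} Estimate \eqref{eq:Q-est} gives, for $a,b\in\{0,1\}$,
\begin{equation*}
  |\tau|^a|\eta|^b\,|\partial_\tau^a\partial_\eta^bQ_{\sigma,\pm}|
  \le C(1+|\tau|+|\eta|)^{-a-b}|\tau|^a|\eta|^b\le C.
\end{equation*}
Moreover $Q_{\sigma,\pm}$ is smooth and bounded on $\R^2$. Theorem~\ref{thm:marcinkiewicz} therefore shows that $Q_{\sigma,\pm}$ is an $L^p(\R^2)$ multiplier. Theorem~\ref{thm:deleeuw} then transfers this to its restriction to $\R\times\Z$, which is an $L^p(\R\times\T)$ multiplier. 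Finally, composing with the M.~Riesz half-line projections $\ind_{\{k\ge K\}}$ and $\ind_{\{k\le-K\}}$, which are $L^p(\T)$ multipliers acting in $\theta$ alone, keeps everything bounded on $L^p(\R\times\T)$. Summing the three pieces shows that $T_{M_{c,\pm}}$ is bounded.

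\emph{Main obstacle.} The delicate part is the uniformity in $k$. The low-mode bounds depend on $k$, which is acceptable because there are finitely many of them. The tail bounds must not depend on $k$, and this is exactly what Lemma~\ref{lem:mixed} provides through the parameter-uniform Gamma quotient estimates of Lemma~\ref{lem:gamma}. One point to check is that the Marcinkiewicz theorem requires the mixed derivative $\partial_\tau\partial_\eta$. This is covered by \eqref{eq:Q-est} with $a=b=1$, obtained from the $a+b=2$ case of \eqref{eq:mixed-est} together with the chain rule through $\varrho$, using that $\varrho'$ is bounded and that no term with $\varrho''$ arises for the mixed derivative. Once these are in place, the conclusion follows from \eqref{eq:conjugation} and the density of $\mathcal D$, exactly as already explained in the paper.
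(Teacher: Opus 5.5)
Your proposal is correct and follows the paper's proof essentially step for step. It uses the same three-part decomposition justified by \eqref{eq:Q-tail}. The smooth tail extensions are handled by Marcinkiewicz and de~Leeuw together with the M.~Riesz half-line projections, and the finitely many low modes by the one-dimensional Mikhlin theorem composed with $P_j$. Your remark that the mixed-derivative case of \eqref{eq:Q-est} comes from the $a=b=1$ case of \eqref{eq:mixed-est} through $\varrho'$ alone is a correct detail that the paper leaves implicit.
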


\begin{proof}
By \eqref{eq:conjugation} it suffices to show that $M_{c,\pm}$
is an $L^p(\R\times\T)$ multiplier. Decompose
\begin{equation}\label{eq:decomp}
  M_{c,\pm}(\tau,k)=\ind_{\{k\ge K\}}Q_{+,\pm}(\tau,k)
  +\ind_{\{k\le-K\}}Q_{-,\pm}(\tau,k)
  +\sum_{|j|<K}\ind_{\{k=j\}}\,M_{c,\pm}(\tau,j),
\end{equation}
which is an identity by \eqref{eq:Q-tail}. Since
$|\tau|^a|\eta|^b\le(1+|\tau|+|\eta|)^{a+b}$, \eqref{eq:Q-est}
implies \eqref{eq:marcinkiewicz}.
Theorem~\ref{thm:marcinkiewicz} therefore shows that
$Q_{\sigma,\pm}$ is an $L^p(\R^2)$ multiplier; it is bounded and
continuous, so by Theorem~\ref{thm:deleeuw} its restriction to
$\R\times\Z$ is an $L^p(\R\times\T)$ multiplier. The factors
$\ind_{\{k\ge K\}}$ and $\ind_{\{k\le-K\}}$ are $L^p(\T)$
multipliers by M.~Riesz's theorem, and the product of two $L^p$
multipliers is an $L^p$ multiplier. In the last sum,
$\ind_{\{k=j\}}$ is the projection $P_j$, a contraction on
$L^p(\R\times\T)$, and $\tau\mapsto M_{c,\pm}(\tau,j)$ is an
$L^p(\R)$ multiplier by \eqref{eq:fixed-mode-mikhlin} and the
one dimensional Mikhlin theorem. Hence each of the finitely many
terms in \eqref{eq:decomp} is an $L^p(\R\times\T)$ multiplier,
and so is $M_{c,\pm}$. 
\end{proof}

\begin{proof}[Proof of
Corollary~\ref{cor:AB-unweighted}]
For $\beta=0$ one has $c=2/p\in(0,2)$, so
Proposition~\ref{prop:weighted-suff} gives the bound for
$W_\pm$. The bound for $W_\pm^*$ follows by unweighted duality.
For local uniformity, let $E\subset\R$ be compact, put
$A_0=\sup_{\alpha\in E}|\alpha|$, and choose one integer $K\ge
A_0+2/p+3$ for every $\alpha\in E$. The large mode estimates
then use the single compact parameter set $[-A_0/2,A_0/2]$ in
Lemma~\ref{lem:gamma}; the same smoothing function $\varrho$ can
be used for all $\alpha\in E$. For the finitely many modes
$|j|<K$, the line $\Re z=2/p$ stays a fixed positive distance
from both boundaries of $S_{\nu_j,\mu_j}$. On $|\tau|\le1$, the
symbols and their first derivatives are therefore bounded
uniformly by compactness in $(\alpha,\tau)$; on $|\tau|\ge1$,
the vertical Gamma and digamma expansions are uniform because
all real parts range over a fixed compact interval. Thus the one
dimensional Mikhlin constants of all the remaining modes are
uniform on $E$.
For $\alpha=0$, the partial wave formula gives the identity
in every channel.
\end{proof}

\begin{remark}
The extension \eqref{eq:extension} is the only place where the
specific form of the Aharonov--Bohm symbol is used beyond the
Gamma ratio estimates: for $|k|\ge K$ the symbol depends on $k$
through the two real analytic functions $x\mapsto
F_{\pm,\pm}(\tau,x)$, and it is this analytic dependence on the
mode index, together with the decay of the derivatives in
$(\tau,x)$ jointly, that yields uniformity in $k$.
\end{remark}

\section{General electromagnetic bounds}
\label{sec:electromagnetic}

We now prove Theorem~\ref{thm:general}. The scalar scattering
formula is already available from
Proposition~\ref{prop:em-L2}, and the magnetic bound from
Corollary~\ref{cor:AB-unweighted}. We first treat
$H_{\alpha,a}$, defined in \eqref{eq:em-H}, and then use a
smooth angular gauge to handle $\mathbf A=b e_\theta$.

\begin{proposition}[Constant magnetic coefficient]
\label{prop:em-wave}
  Let $\alpha\in\R\setminus\frac12\Z$ and let
  $a\in W^{3,\infty}(\T;\R)$ satisfy $B_{\alpha,a}\ge0$.
  The wave operators
  \begin{equation}\label{eq:em-wave}
    W_\pm^{\alpha,a}
    =\slim_{t\to\pm\infty}e^{itH_{\alpha,a}}e^{it\Delta}
  \end{equation}
  exist and are unitary on $L^2(\R^2)$. For every
  $1<p<\infty$, both $W_\pm^{\alpha,a}$ and their
  adjoints extend boundedly to $L^p(\R^2)$ and are mutually
  inverse there. The same assertions hold for the wave
  operators of the pair $(H_{\alpha,a},H_\alpha)$.
\end{proposition}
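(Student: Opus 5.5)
Existence and unitarity on $L^2$ come directly from Proposition~\ref{prop:em-L2}, applied with $B_1=B_{\alpha,a}$ and $B_0=D_\theta^2$ (free Laplacian), or with $B_0=B_{\alpha,0}$ for the pair $(H_{\alpha,a},H_\alpha)$. For the $L^p$ bounds I would use the chain rule for wave operators,
\[
  W_\pm(H_{\alpha,a},-\Delta)=W_\pm(H_{\alpha,a},H_\alpha)\,W_\pm(H_\alpha,-\Delta).
\]
This identity holds on $L^2$ because all three limits exist and are unitary. The second factor and its adjoint are bounded on every $L^p$, $1<p<\infty$, by Corollary~\ref{cor:AB-unweighted}. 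The problem therefore reduces to the pair $(H_{\alpha,a},H_\alpha)$ with the magnetic potential fixed. Mutual inversion on $L^p$ follows from $W^*W=WW^*=I$ on the dense set $L^2\cap L^p$ together with the boundedness of both operators.

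For the pair $(H_{\alpha,a},H_\alpha)$, formula \eqref{eq:em-L2} gives
\[
  W_\pm=\mathcal K_{B_1}e^{\pm i\pi\sqrt{B_1}/2}e^{\mp i\pi\sqrt{B_0}/2}\mathcal K_{B_0},
  \qquad B_1=B_{\alpha,a},\ B_0=B_{\alpha,0}.
\]
I would write this as $S\,\Phi_\pm$, where $S=\mathcal K_{B_1}\mathcal K_{B_0}\cdot(\text{basis change})$ is the phase-free spectral intertwiner of \cite{FSWZZ2026}, whose $L^p$ boundedness is the transplantation bound proved in Appendix~\ref{app:transplantation}. The remaining factor $\Phi_\pm=\mathcal K_{B_0}\,e^{\pm i\pi(\sqrt{B_1}-\sqrt{B_0})/2}\,\mathcal K_{B_0}$ is conjugated appropriately.

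A cleaner route is to treat everything on the Mellin side, as in Section~\ref{sec:interior}. Under $\mathcal U_{p,0}$ (so $c=2/p$), the operator $\mathcal K_{B_1}\mathcal K_{B_0}$ restricted to the eigenpair $(\phi_n,e_m)$ acts as $C_{nm}\,m_{s_n,\mu_m}(c-i\tau)$, with the phase $e^{\pm i\pi(s_n-\mu_m)/2}$ in front. So $W_\pm$ becomes an operator-valued multiplier in $\tau$, with values in operators on $L^2(\T)$ and on $L^p(\T)$. This can be written as
\[
  \tau\mapsto e^{\pm i\pi\sqrt{B_1}/2}\,\Gamma\text{-quotients of }\sqrt{B_1}\ \cdot\ \Gamma\text{-quotients of }\sqrt{B_0}\ e^{\mp i\pi\sqrt{B_0}/2},
\]
that is, as $m_1(\sqrt{B_1},\tau)\,m_0(\sqrt{B_0},\tau)$ with $m_j(s,\tau)=e^{\pm i\pi s/2}\Gamma(\tfrac{s+c-i\tau}{2})/\Gamma(\tfrac{s+2-c+i\tau}{2})$ and its reciprocal analogue. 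The hypothesis $\alpha\notin\frac12\Z$ together with $B_{\alpha,a}\ge0$ is used here to guarantee that the bottom eigenvalue is positive. This keeps $c=2/p$ strictly inside every strip $(-s_n,s_n+2)$, uniformly in $n$, so that $\inf s_n>0$ replaces $\rho_\alpha>0$. It also rules out degeneracy (coinciding $\pm$ eigenbranches) that would break the smooth labelling of eigenvalues.

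The main obstacle is the uniform $L^p(\T)$-valued Mikhlin/Marcinkiewicz bound in $(\tau,n)$ without a common eigenbasis. I would attack it by perturbation in the angular variable. Since $a\in W^{3,\infty}$, the eigenvalues satisfy $s_n=|n+\alpha|+\kappa/|n|+O(|n|^{-2})$ with $\kappa=\frac1{2\pi}\int a$, and the eigenfunctions satisfy $\phi_n=e^{in\theta}+O(|n|^{-1})$ in $W^{1,\infty}$. I would then write $m_1(\sqrt{B_1},\tau)$ as $m_1(\sqrt{B_0+\kappa'},\tau)$, which is a symbol in $(\tau,k)$ handled exactly as in Proposition~\ref{prop:weighted-suff} via Lemma~\ref{lem:gamma} with a shifted Gamma parameter, plus a remainder. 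I would control the remainder through the Helffer--Sjöstrand or Cauchy formula for functions of $B_1$ versus $B_0$. The resulting operator differences $(B_1-z)^{-1}-(B_0-z)^{-1}=-(B_1-z)^{-1}a(B_0-z)^{-1}$ gain a factor $(1+|\tau|)^{-1}$ per derivative, which should make the remainder multiplier Hilbert--Schmidt-small in $n$ with an integrable Mikhlin constant. Failing that, I would simply invoke the appendix transplantation bound for the phase-free part and treat the phase factor $e^{\pm i\pi(\sqrt{B_1}-\sqrt{B_{0}})/2}$ separately. The eigenvalue asymptotics show that this factor is a bounded function of $B_0$ plus a smoothing operator on $\T$, hence bounded on $L^p(\T)$ and therefore on $L^p(\R^2)$ by Fubini.
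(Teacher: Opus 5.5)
\textbf{There is a genuine gap at the scattering phase.} Existence, unitarity and the chain-rule reduction are fine. The $L^p$ argument breaks where you handle the phase.

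After factoring out the phase-free transplantation $S=\mathcal K_aJ\mathcal K_\alpha$, the remaining factor is not $\mathcal K_{B_0}e^{\pm i\pi(\sqrt{B_1}-\sqrt{B_0})/2}\mathcal K_{B_0}$, because $\sqrt{B_1}$ and $\sqrt{B_0}$ do not commute. Writing $e^{\pm i\pi\sqrt{B_1}/2}=JT_\pm' J^*$, the exact identity is
\[
  W_\pm(H_{\alpha,a},H_\alpha)=S\,D'_\pm\,\mathcal K_\alpha\, e^{\pm i\pi\sqrt{B_0}/2}J^*e^{\mp i\pi\sqrt{B_0}/2}\,\mathcal K_\alpha,
  \qquad D'_\pm e_k=e^{\pm i\pi(\tau_k-\sigma_k)/2}e_k .
\]
The diagonal factor $D'_\pm$ is harmless. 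The non-diagonal angular part, your ``smoothing operator'', is the problem. It does not commute with $\mathcal K_\alpha$: its $(n,m)$ entry becomes a multiple of the cross-channel transplantation $\HH_{\sigma_n}\HH_{\sigma_m}$. So its boundedness on $L^p(\T)$ gives nothing on $L^p(\R^2)$ via Fubini, and summing these off-diagonal transplantations is exactly the difficulty.

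Your Mellin-side route does not close this either. You do correctly note that $m_{\nu,\mu}$ factors as a function of $\nu$ times a function of $\mu$. But the route needs an $L^p(\T)$-valued (R-bounded) Mikhlin theorem, which you do not supply. Hilbert--Schmidt smallness only controls $L^2(\T)$. The claimed $(1+|\tau|)^{-1}$ gain from angular resolvent differences has no mechanism. Moreover, the two individual factors grow like $(1+|s|+|\tau|)^{\pm(c-1)}$, so they are not bounded perturbations. The scalar Marcinkiewicz/de~Leeuw argument of Section~\ref{sec:interior} needs a common angular basis, which is precisely what is absent.

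\textbf{The missing idea is Fourier conjugation of the angular change of basis.} The map $\mathcal F_\pm=e^{\mp i\pi|D_\theta|/2}\mathcal K_0$ commutes with rotations and turns each $M_h$ into a degree-zero homogeneous Mikhlin multiplier. Together with the rotation representation of $J-I$, this makes $\mathcal F_\pm^{-1}J^*\mathcal F_\pm$ bounded on $L^p$ (Lemma~\ref{lem:em-angular}). This is why the paper compares directly with $-\Delta$, obtaining the exact factorization $W^{\alpha,a}_\pm=UD_\pm\mathcal F_\pm^{-1}J^*\mathcal F_\pm$ with $U=SV_\alpha$.

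Your reduction to the relative pair can be repaired, but only with this same ingredient. Since $e^{\mp i\pi\sqrt{B_0}/2}\mathcal K_\alpha=\mathcal F_\pm(W^\alpha_\pm)^*$, the troublesome factor equals $W^\alpha_\pm(\mathcal F_\pm^{-1}J^*\mathcal F_\pm)(W^\alpha_\pm)^*$.

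Finally, $\alpha\notin\frac12\Z$ and $B_{\alpha,a}\ge0$ do not force a positive bottom eigenvalue: take $\alpha=\frac14$ and $a\equiv-\frac1{16}$. Positivity is not needed anyway, since $c=2/p\in(0,2)$ lies in every strip. The hypothesis is really used for simplicity of the spectrum of $B_{\alpha,0}$ and the inverse-gap bound in Lemma~\ref{lem:em-eigenbasis}.
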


\subsection{Angular changes of basis and Fourier conjugation}

Let $\mathcal F$ be the unitary Fourier transform on $\R^2$,
with kernel $(2\pi)^{-1}e^{-ix\cdot\xi}$. Set
$\mathcal F_+=\mathcal F$ and
$\mathcal F_-=\mathcal F^{-1}$. Angular operators act at each
fixed radius. We use $M_h$ for multiplication by $h(\theta)$
and $R_\varphi f(r,\theta)=f(r,\theta-\varphi)$ for rotations.

\begin{lemma}\label{lem:em-angular}
  Let $e_k=(2\pi)^{-1/2}e^{ik\theta}$, and suppose that a
  unitary angular operator $J$ satisfies
  $Je_k=e_k(1+r_k)$, where
  \begin{equation}\label{eq:em-summability}
    \sum_{k\in\Z}\|r_k\|_{H^3(\T)}^2<\infty.
  \end{equation}
  Then $J,J^*$ and
  $\mathcal F_\pm^{-1}J\mathcal F_\pm$,
  $\mathcal F_\pm^{-1}J^*\mathcal F_\pm$ are bounded on
  $L^p(\R^2)$ for every $1<p<\infty$.
\end{lemma}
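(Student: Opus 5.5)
The plan is to treat $J$ and its Fourier conjugates as explicit angular kernels or mode-shifting operators, and to bound everything through the coefficients $\widehat{r_k}(d)$, $d\in\Z$. Write $J=I+R$, so that $Rf(r,\theta)=\sum_k f_k(r)\,e_k(\theta)r_k(\theta)$. This is an angular integral operator acting at each fixed radius, with kernel
\begin{equation*}
  K(\theta,\omega)=\frac1{2\pi}\sum_k r_k(\theta)e^{ik(\theta-\omega)}.
\end{equation*}

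\emph{Step 1: $J$ and $J^*$.} Expanding $r_k(\theta)=\sum_d\widehat{r_k}(d)e^{id\theta}$, the hypothesis \eqref{eq:em-summability} says that $\sum_{k,d}\langle d\rangle^6|\widehat{r_k}(d)|^2<\infty$. Plancherel in $\omega$ gives
\begin{equation*}
  \|K(\theta,\cdot)\|_{L^2_\omega}^2\lesssim\sum_k|r_k(\theta)|^2\lesssim\sum_k\|r_k\|_{H^1}^2 .
\end{equation*}
In $\theta$, for fixed $\omega$, one has
\begin{equation*}
  \|K(\cdot,\omega)\|_{L^2_\theta}^2\lesssim\sum_m\Bigl(\sum_k|\widehat{r_k}(m-k)|\Bigr)^2 .
\end{equation*}
Cauchy--Schwarz with the weight $\langle m-k\rangle^{-6}$ bounds this by $\sum_{k,d}\langle d\rangle^6|\widehat{r_k}(d)|^2$. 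Thus $K\in L^\infty_\theta L^2_\omega\cap L^\infty_\omega L^2_\theta$, and since $\T$ is compact, $K\in L^\infty_\theta L^1_\omega\cap L^\infty_\omega L^1_\theta$. Schur's test gives boundedness of $R$ on $L^p(\T)$ for all $p$, uniformly in $r$. Integrating in $r\,dr$ gives boundedness on $L^p(\R^2)$. The kernel of $J^*-I$ is $\overline{K(\omega,\theta)}$, which satisfies the same bounds.

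\emph{Step 2: Fourier conjugation.} Since $\mathcal F$ maps $e^{ik\theta}g(r)$ to $(-i)^{|k|}e^{ik\varphi}\HH_{|k|}g$, with $i^{|k|}$ for $\mathcal F^{-1}$, we get
\begin{equation*}
  \mathcal F_\pm^{-1}R\,\mathcal F_\pm=\sum_{d\in\Z}T_d,\qquad
  T_d f=\sum_k \widehat{r_k}(d)\,\epsilon_{k,d}\,e_{k+d}\,\HH_{|k+d|}\HH_{|k|}f_k ,
\end{equation*}
with unimodular phases $\epsilon_{k,d}$. Each $T_d$ is the modulation $e^{id\theta}$ composed with two factors. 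The first is the angular multiplier $B_d=\sum_k\widehat{r_k}(d)P_k$. The second is the mode-dependent transplantation with symbol $\epsilon_{k,d}\,m_{|k+d|,|k|}(2/p-i\tau)$, acting after the conjugation $\mathcal U_{p,0}$ of \eqref{eq:fulliso}. The line $\Re z=2/p$ lies in every strip $S_{|k+d|,|k|}$, so Proposition~\ref{prop:symbol} applies.

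For $B_d$ with $p\ge2$, Hausdorff--Young and H\"older give $\|B_dg\|_{L^p(\T)}\le\|b\,\widehat g\|_{\ell^{p'}}\le\|b\|_{\ell^2}\|\widehat g\|_{\ell^2}\lesssim\|b\|_{\ell^2}\|g\|_{L^p(\T)}$, where $b_k=\widehat{r_k}(d)$. Duality handles $p<2$. So $\|B_d\|\lesssim\|(\widehat{r_k}(d))_k\|_{\ell^2}$.

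For the transplantation factor, split the modes. For $|k|\ge2|d|+K$, the signs of $k$ and $k+d$ agree, so the phase is constant on each half line. The symbol is then $G_{d/2}(w)/G_{d/2}(v)$ with $w,v$ as in \eqref{eq:aux}, and I would extend it smoothly in $k$ exactly as in \eqref{eq:extension}. Lemma~\ref{lem:mixed}, Marcinkiewicz, de Leeuw and M.~Riesz then give an $L^p(\R\times\T)$ bound, provided the constants grow at most like $\langle d\rangle^N$. For the remaining $O(|d|+K)$ modes, use \eqref{eq:fixed-mode-mikhlin} mode by mode with $P_k$. Here the coefficient sum costs $\sum_{|k|\lesssim|d|}|\widehat{r_k}(d)|\lesssim\langle d\rangle^{1/2}\|(\widehat{r_k}(d))_k\|_{\ell^2}$. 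Summing over $d$ with Cauchy--Schwarz against $\sum_d\langle d\rangle^6\|(\widehat{r_k}(d))_k\|_{\ell^2}^2$ converges if $N<5/2$. The term $d=0$ with the identity coefficient is $I$, since $\HH_{|k|}^2=I$. The same argument applies to $J^*$.

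\emph{Main obstacle.} The delicate step is making Lemma~\ref{lem:gamma} and Lemma~\ref{lem:mixed} quantitative in the shift parameter $a=d/2$, which now ranges over an unbounded set. One needs Mikhlin and Marcinkiewicz constants for $m_{|k+d|,|k|}$ that grow only polynomially in $d$, with small exponent, uniformly in $k$. I would first try to work directly with $\log G_a(\zeta)=\sum_{j=0}^{d-1}\log\bigl(\zeta+\tfrac j2\bigr)$-type telescoping. Since the arguments have real part $\gtrsim|k|\ge2|d|$, this makes each ratio $1+O(d/|\zeta|)$, which gives a bound of order $e^{O(d^2/|k|)}$. If the resulting exponent is too large, I would spend more of the $H^3$ regularity, that is, trade the $\langle d\rangle^6$ weight, or enlarge the threshold to $|k|\ge d^2$ and pay for the larger finite block.
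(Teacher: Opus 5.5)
Your Step~1 is correct and differs from the paper only in packaging. The Schur test with the two mixed $L^2$ bounds you derive from \eqref{eq:em-summability} controls $J-I$ and $J^*-I$ on every $L^p$. The paper instead writes $J-I=\int_0^{2\pi}M_{K(\cdot,\varphi)}R_\varphi\,\frac{d\varphi}{2\pi}$ with $\int\|K(\cdot,\varphi)\|_{C^2}\,d\varphi<\infty$, and uses that rotations are isometries. Your bound $\|B_d\|_{L^p\to L^p}\lesssim\|(\widehat{r_k}(d))_k\|_{\ell^2}$ is also fine.

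The gap is in Step~2, exactly where you locate the main obstacle. You never prove an $L^p$ bound, uniform in $k$ and polynomial in $d$, for the transplantation factor $\mathcal T_d=\sum_k\epsilon_{k,d}\HH_{|k+d|}\HH_{|k|}P_k$. Lemmas~\ref{lem:gamma} and \ref{lem:mixed} are proved only for shifts in a fixed compact set. The constant in \eqref{eq:fixed-mode-mikhlin} depends on the mode in an unspecified way. So neither can be quoted for $a=\pm d/2$ with $d$ unbounded, nor for the $O(|d|)$ modes of your finite block, whose orders reach size $|d|$. Your heuristic $e^{O(d^2/|k|)}$ is exponential in $d$ at the threshold $|k|\sim2|d|$, which would wreck the sum over $d$, and the remedies are only sketched. (Also, because the finite block carries an extra factor $\langle d\rangle^{1/2}$, Cauchy--Schwarz against the $\langle d\rangle^6$ weight requires $N<2$, not $N<5/2$.) Thus the Fourier-conjugate bound, which is the real content of the lemma, is not established.

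The missing observation makes all of the Gamma-function analysis unnecessary. With your phases $\epsilon_{k,d}=i^{\pm(|k+d|-|k|)}$ for $\mathcal F_\pm$, the operator $M_{e^{id\theta}}\mathcal T_d$ sends $e_kg$ to $i^{\pm(|k+d|-|k|)}e_{k+d}\HH_{|k+d|}\HH_{|k|}g$. That is,
\[
  M_{e^{id\theta}}\mathcal T_d=\mathcal F_\pm^{-1}M_{e^{id\theta}}\mathcal F_\pm ,
\]
which is the homogeneous Fourier multiplier with symbol $e^{id\arg(\pm\xi)}$, a power of the complex Riesz transform. Its Mikhlin constant is $O(\langle d\rangle^2)$. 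Moreover $B_d$ commutes with $\mathcal F_\pm$, since $\mathcal F_\pm$ preserves each angular mode, so $T_d=(\mathcal F_\pm^{-1}M_{e^{id\theta}}\mathcal F_\pm)B_d$. Hence $\|T_d\|_{L^p\to L^p}\lesssim\langle d\rangle^2\|(\widehat{r_k}(d))_k\|_{\ell^2}$, and
\[
  \sum_{d\in\Z}\langle d\rangle^2\|(\widehat{r_k}(d))_k\|_{\ell^2}
  \le\Bigl(\sum_{d}\langle d\rangle^{-2}\Bigr)^{1/2}
  \Bigl(\sum_{k,d}\langle d\rangle^6|\widehat{r_k}(d)|^2\Bigr)^{1/2}<\infty .
\]
The adjoint is handled identically, using $J^*-I=\sum_dB_d^*M_{e^{-id\theta}}$.

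This is exactly the paper's mechanism. Conjugation by $\mathcal F_\pm$ turns multiplication by an angular function $h$ into the degree zero multiplier with symbol $h(\xi/|\xi|)$ or $h(-\xi/|\xi|)$, bounded by $C_p\|h\|_{C^2}$, and rotations (and angular multipliers) commute with $\mathcal F_\pm$. The paper applies this to its superposition over rotations, rather than to your mode-shift decomposition $J-I=\sum_dM_{e^{id\theta}}B_d$.
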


\begin{proof}
  Define
  \begin{equation*}
    K(\theta,\varphi)
    =\sum_{k\in\Z}r_k(\theta)e^{ik\varphi},
  \end{equation*}
  with convergence in $L^2(\T_\varphi;H^3(\T_\theta))$.
  Parseval, the embedding $H^3(\T)\subset C^2(\T)$, and
  Cauchy--Schwarz imply
  \begin{equation}\label{eq:em-kernel-bound}
    \int_0^{2\pi}\|K(\cdot,\varphi)\|_{C^2}
    \frac{d\varphi}{2\pi}
    \le C\left(\sum_k\|r_k\|_{H^3}^2\right)^{1/2}.
  \end{equation}
  On finite angular sums, and then on $L^2$, one has
  \begin{equation}\label{eq:em-rotation}
    J-I=\int_0^{2\pi}
    M_{K(\cdot,\varphi)}R_\varphi
    \frac{d\varphi}{2\pi}.
  \end{equation}
  Indeed, applying the integral to $e_k$ selects the Fourier
  coefficient $r_k$. Rotations are $L^p$ isometries, so
  \eqref{eq:em-kernel-bound} already proves boundedness of
  $J$ on $L^p$.

  Rotations commute with $\mathcal F_\pm$. Moreover,
  $\mathcal F_\pm^{-1}M_h\mathcal F_\pm$ is a homogeneous
  Fourier multiplier with angular symbol $h(\theta)$ or
  $h(\theta+\pi)$. The two dimensional Mikhlin theorem gives
  \begin{equation*}
    \|\mathcal F_\pm^{-1}M_h\mathcal F_\pm\|_{L^p\to L^p}
    \le C_p\|h\|_{C^2},\qquad 1<p<\infty;
  \end{equation*}
  see \cite{Grafakos2014}. Conjugating
  \eqref{eq:em-rotation} on $L^2$ and integrating these bounds
  proves the corresponding $L^p$ estimate by density. The
  integrals can be understood as Bochner integrals applied to
  a function in $L^2\cap L^p$.

  The adjoint of the right hand side of
  \eqref{eq:em-rotation} has the same form, with kernel
  \begin{equation*}
    K^*(\theta,\varphi)
    =\overline{K(\theta-\varphi,-\varphi)}.
  \end{equation*}
  Its integrated $C^2$ norm equals that in
  \eqref{eq:em-kernel-bound}. This proves both assertions for
  $J^*$. The inverse identities on $L^2$ extend to $L^p$ by
  density.
\end{proof}

\begin{lemma}\label{lem:em-eigenbasis}
  Under the assumptions of Proposition~\ref{prop:em-wave}, there
  is an orthonormal eigenbasis $\{\phi_k\}_{k\in\Z}$ of
  $B_{\alpha,a}$ such that, writing
  $\phi_k=e_k b_k$ and $B_{\alpha,a}\phi_k=\lambda_k\phi_k$,
  \begin{equation}\label{eq:em-basis-estimate}
    |\lambda_k-(k+\alpha)^2|\le\|a\|_\infty,
    \qquad
    \|b_k-1\|_{H^3(\T)}
    \le\frac{C_{\alpha,a}}{1+|k|}.
  \end{equation}
  Consequently, $Je_k=\phi_k$ satisfies
  Lemma~\ref{lem:em-angular}.
\end{lemma}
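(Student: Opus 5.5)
We treat $B_{\alpha,a}=B_{\alpha,0}+a$ as a bounded perturbation of the diagonal operator $B_{\alpha,0}$, whose eigenvectors are $e_k$ with eigenvalues $(k+\alpha)^2$. The hypothesis $\alpha\notin\frac12\Z$ is exactly what makes these eigenvalues simple. Indeed, $(k+\alpha)^2=(j+\alpha)^2$ with $j\ne k$ forces $k+j=-2\alpha$, which is impossible. Moreover the gaps grow: $|(k+\alpha)^2-(j+\alpha)^2|=|k-j|\,|k+j+2\alpha|\ge c_\alpha|k-j|(1+|k|+|j|)/C$ away from the diagonal, with $c_\alpha=\operatorname{dist}(2\alpha,\Z)>0$ controlling the near-resonant pairs.

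\textbf{Step 1: eigenvalue bound.} The first bound in \eqref{eq:em-basis-estimate} follows once the eigenvalues are labelled correctly. By the min--max principle, the ordered eigenvalues of $B_{\alpha,a}$ are within $\|a\|_\infty$ of the ordered values $(k+\alpha)^2$. For $|k|\ge k_0(\alpha,a)$ the unperturbed gaps exceed $2\|a\|_\infty+1$. Each disc $D_k$ of radius $\|a\|_\infty$ about $(k+\alpha)^2$ then contains exactly one eigenvalue $\lambda_k$, which we justify by Riesz projections along a contour homotopy $B_{\alpha,0}+sa$, $s\in[0,1]$. The finitely many low eigenvalues are matched to the low $k$ by counting, using min--max ordering.

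\textbf{Step 2: eigenvectors for large $|k|$.} Let $\phi_k$ be the normalized eigenvector with $\langle\phi_k,e_k\rangle>0$. Write $\phi_k=e_k+\psi_k$ with $\psi_k\perp e_k$, up to normalization. The eigenvalue equation gives
\[
  \psi_k=(\lambda_k-B_{\alpha,0})^{-1}Q_k(a\phi_k),
\]
where $Q_k$ is the projection off $e_k$. On the mode $e_j$, the resolvent is bounded by $C/(|k-j|(1+|k|))$ for large $|k|$. This yields $\|\psi_k\|_{L^2}\le C/(1+|k|)$. To bound $\|b_k-1\|_{H^3}$, note that $b_k-1=\bar e_k\psi_k\sqrt{2\pi}$ (modulo normalization), so its $j$-th Fourier coefficient is the $(j+k)$-th coefficient of $\psi_k$. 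It therefore suffices to show
\[
  \sum_j (1+|j-k|)^6|\widehat{\psi_k}(j)|^2\le C(1+|k|)^{-2}.
\]

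\textbf{Step 3: bootstrapping regularity.} We bootstrap using $a\in W^{3,\infty}$. Since multiplication by $a$ maps $H^s$ to $H^s$ for $s\le3$, the conjugated operator $M_{\bar e_k}aM_{e_k}=a$ is independent of $k$. The resolvent conjugated by $e_k$ acts on frequency $m=j-k$ by the multiplier $1/(\lambda_k-(m+k+\alpha)^2)$. For $m\ne0$ this multiplier is bounded by $C/(|m|(1+|k|)+m^2)$, which gains up to one derivative and a factor $(1+|k|)^{-1}$. Iterating $b_k-1=R_k(a b_k)$ three times, with the $H^s$ bound at each stage (the gain $|m|/(|m|(1+|k|)+m^2)\le (1+|k|)^{-1}$ uniformly), gives the $H^3$ estimate with the $1/(1+|k|)$ factor. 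The normalization constant is $1+O(|k|^{-2})$ and does not affect this.

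\textbf{Step 4: low modes and conclusion.} For the finitely many $|k|<k_0$ the bound is trivial, since eigenfunctions are in $H^3$ by elliptic regularity. The family $\{\phi_k\}$ is complete because the Riesz projections account for the entire spectrum. Finally, $Je_k=\phi_k$ defines a unitary operator with $r_k=b_k-1$, and $\sum_k(1+|k|)^{-2}<\infty$ gives \eqref{eq:em-summability}.

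The main obstacle is the uniform spectral labelling in Step 1 together with the $H^3$ bootstrap in Step 3. The near-resonant partner $j=-k-\lfloor2\alpha\rceil$ is where $\alpha\notin\frac12\Z$ is essential, because there $|k+j+2\alpha|\ge c_\alpha$ only. For this pair, the gap is still $|k-j|\,c_\alpha\gtrsim|k|$, which suffices.
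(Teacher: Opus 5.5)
Your route is essentially the paper's: min--max labelling of the eigenvalues, then conjugation by $e_k$ and inversion of the unperturbed operator off the $k$th mode with a gain $(1+|k|)^{-1}$, absorbing $a$. However, the quantitative bound you use in Steps 2--3, and in your opening gap inequality, is false.

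With $m=j-k$ and $z_k=k+\alpha$ one has $(j+\alpha)^2-(k+\alpha)^2=m(m+2z_k)$. Your lower bounds $|k-j|\,|k+j+2\alpha|\gtrsim|k-j|(1+|k|)$, and equivalently $|m(m+2z_k)|\gtrsim|m|(1+|k|)+m^2$, fail on the whole anti-diagonal band $|k+j+2\alpha|=o(|k|)$, not only at one partner $j$. In that band $|m|\approx2|k|$, while $|m+2z_k|=|j+k+2\alpha|$ can be as small as $\delta=\operatorname{dist}(2\alpha,\Z)$. What does hold uniformly is $|m(m+2z_k)|\ge\min\{1,\delta\}|z_k|$ for every $m\ne0$; split into the cases $|m|<|z_k|$ and $|m|\ge|z_k|$. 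The conjugated resolvent has symbol $(d_k-m(m+2z_k))^{-1}$ with $d_k=\lambda_k-z_k^2$ bounded, so this gives it norm $O(|k|^{-1})$ on every $H^s$. It gives no derivative gain together with that factor: in the band, $|m|\,|m(m+2z_k)|^{-1}=|m+2z_k|^{-1}$ is of order $\delta^{-1}$, not $(1+|k|)^{-1}$. So the three-fold iteration $L^2\to H^1\to H^2\to H^3$, each step gaining a derivative and a factor $(1+|k|)^{-1}$, does not work as written. Your final remark that a gap $\gtrsim|k|$ ``suffices'' is true only for $H^s\to H^s$ bounds.

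The repair uses only ingredients you already state, and no derivative gain is needed. The eigenfunctions are a priori in $H^3$; the regularity you invoke in Step 4 applies to every $k$. Multiplication by $a\in W^{3,\infty}$ is bounded on $H^3$. Write the unnormalized eigenfunction as $e_k+\psi_k=e_k\tilde b_k$, so that $\tilde b_k-1=R_k(a\tilde b_k)$. The $H^3\to H^3$ bound then gives $\|\tilde b_k-1\|_{H^3}\le C(1+|k|)^{-1}\|a\|_{W^{3,\infty}}(1+\|\tilde b_k-1\|_{H^3})$. Absorb the last term for large $|k|$; normalization changes $\tilde b_k$ only by $O(|k|^{-2})$. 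This is exactly the paper's argument. The paper writes $b_k=t_k+v_k$ and inverts $A_k=D_\theta(D_\theta+2z_k)$ on mean-zero functions, with $\|A_k^{-1}Q\|_{H^3\to H^3}\le C_\alpha/(1+|k|)$. The corrected uniform bound also justifies your Step 1 gap claim and the $L^2$ estimate in Step 2, so those parts survive.
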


\begin{proof}
  The eigenvalues of $B_{\alpha,0}$ are $(k+\alpha)^2$
  and are distinct since $2 \alpha\not\in \mathbb{Z}$.
  Arrange them in a strictly increasing sequence $\mu_{n}$,
  and arrange the $\lambda_{k}$ in another increasing sequence
  $\mu'_{n}$ (with multiplicity).
  The min--max variational characterization implies immediately
  $|\mu_{n}-\mu'_{n}|\le\|a\|_{L^{\infty}}$.
  Since $|\mu_{n}-\mu_{n+1}|\to \infty$, the intervals
  $[\mu_{n}-\|a\|_{L^{\infty}},\mu_{n}+\|a\|_{L^{\infty}}]$
  are disjoint for large $n$, and we can match each $\mu'_{n}$ 
  uniquely
  with one $\mu_{n}$; in particular, the eigenvalues $\lambda_{k}$
  of $B_{\alpha,a}$ are simple for large $k$, and, after
  a suitable reordering, satisfy the first of 
  \eqref{eq:em-basis-estimate}.
  The eigenfunctions solve the ODE
  $$-\phi_k''-2i\alpha\phi_k' +\bigl(\alpha^2+a(\theta)\bigr)\phi_k
    =\lambda_k\phi_k$$
  hence $\phi_{k}\in H^{3}$ by bootstrapping.

  Put $z_k=k+\alpha$, let
  $Qf=f-\frac1{2\pi}\int_0^{2\pi}f(\theta)\,d\theta$
  be the projection on zero angular mean functions,
  and let $A_k=D_\theta(D_\theta+2z_k)$ where 
  $D_{\theta}=-i \partial_{\theta}$.
  On functions of mean zero, the operator
  $A_k$ is invertible. To estimate
  its inverse, set $\delta=\operatorname{dist}(2\alpha,\Z)>0$.
  For each nonzero integer $\ell$, if $|\ell|<|z_k|$, then
  $|\ell+2z_k|>|z_k|$. If $|\ell|\ge|z_k|$, then
  $|\ell+2z_k|\ge\delta$. In either case,
  \begin{equation*}
    |\ell(\ell+2z_k)|\ge\min\{1,\delta\}|z_k|.
  \end{equation*}
  Since $A_k$ is a Fourier multiplier, this proves, for
  sufficiently large $|k|$,
  \begin{equation}\label{eq:em-inverse-gap}
    \|A_k^{-1}Q\|_{H^3\to H^3}
    \le\frac{C_\alpha}{1+|k|}.
  \end{equation}
  Write $b_k=t_k+v_k$, where $t_k$ is constant and $Qv_k=v_k$,
  and put $d_k=\lambda_k-z_k^2$. The eigenvalue equation
  becomes
  \begin{equation*}
    A_kv_k=Q\bigl((d_k-a)v_k\bigr)-t_kQa.
  \end{equation*}
  Multiplication by $a$ is bounded on $H^3$, while $d_k$ is
  uniformly bounded. Apply \eqref{eq:em-inverse-gap} and
  absorb the $v_k$ term for large $|k|$ to obtain
  \begin{equation*}
    \|v_k\|_{H^3}\le\frac{C_{\alpha,a}|t_k|}{1+|k|}.
  \end{equation*}
  In particular, $t_k$ cannot vanish for a normalized
  eigenfunction. Choose its phase so that $t_k>0$. With the
  normalized angular measure $d\theta/(2\pi)$, normalization
  gives $t_k^2+\|v_k\|_2^2=1$, whence $t_k\le1$ and
  $|t_k-1|=O(|k|^{-2})$. This proves the second estimate for
  large $|k|$. Choose a smooth orthonormal eigenbasis for the
  remaining finite dimensional subspace and increase the
  constant to cover it. Summing $(1+|k|)^{-2}$ proves
  \eqref{eq:em-summability}.
\end{proof}

\subsection{From spectral transplantation to wave operators}

\begin{proposition}[Spectral transplantation]
\label{prop:em-transplantation}
  Let $a$ and $\alpha$ be as in
  Proposition~\ref{prop:em-wave}. Choose the eigenbasis of
  Lemma~\ref{lem:em-eigenbasis} and set
  \begin{equation*}
    \sigma_k=|k+\alpha|,\qquad \tau_k=\sqrt{\lambda_k},
    \qquad Je_k=\phi_k.
  \end{equation*}
  The operator
  \begin{equation}\label{eq:em-S}
    S\left(\sum_k f_ke_k\right)
    =\sum_k(\HH_{\tau_k}\HH_{\sigma_k}f_k)\phi_k
  \end{equation}
  is unitary on $L^2(\R^2)$. Both $S$ and $S^*$ extend
  boundedly to every $L^p(\R^2)$, $1<p<\infty$, and are
  mutually inverse there. For every bounded Borel function
  $m$ on $[0,\infty)$,
  \begin{equation*}
    m(H_{\alpha,a})S=Sm(H_\alpha)\qquad\text{on }L^2.
  \end{equation*}
\end{proposition}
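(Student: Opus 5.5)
The plan is to factor $S=J\,T$, where $J e_k=\phi_k$ is the unitary change of angular basis from Lemma~\ref{lem:em-eigenbasis} and $T$ is the mode-diagonal transplantation
\begin{equation*}
  T\Bigl(\sum_k f_ke_k\Bigr)=\sum_k(\HH_{\tau_k}\HH_{\sigma_k}f_k)e_k .
\end{equation*}

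\emph{$L^2$ theory and intertwining.} Each $\HH_{\tau_k}\HH_{\sigma_k}$ is unitary on $L^2(\R_+,r\,dr)$. Hence $T$ is unitary, and so is $S=JT$, with $S^*=T^*J^*$. By \eqref{eq:em-K} we have $H_{\alpha,a}=\mathcal K_{B_{\alpha,a}}r^2\mathcal K_{B_{\alpha,a}}$, where the radial orders in the basis $\phi_k$ are $\tau_k$. In the same way $H_\alpha=\bigoplus h_{\sigma_k}$ in the basis $e_k$. Channelwise, $m(h_{\tau})\HH_\tau\HH_\sigma=\HH_\tau m(\xi^2)\HH_\sigma=\HH_\tau\HH_\sigma m(h_\sigma)$ by \eqref{eq:hgamma}. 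This gives $m(H_{\alpha,a})S=Sm(H_\alpha)$ on finite angular sums, and then on $L^2$ by density. By Lemma~\ref{lem:em-angular}, $J$ and $J^*$ are bounded on $L^p$. It therefore remains to bound $T$ and $T^*$ on $L^p$; mutual inversion then follows from the $L^2$ identities by density.

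\emph{Reduction of $T$ to a multiplier.} As in \eqref{eq:conjugation} with $\beta=0$ and $c=2/p\in(0,2)$, $T$ is conjugate to the multiplier $m_{\tau_k,\sigma_k}(c-i\tau)$ on $L^p(\R\times\T)$. The line $\Re z=c$ lies in every strip $S_{\tau_k,\sigma_k}=(-\tau_k,\sigma_k+2)$, since $\tau_k\ge0$ and $\sigma_k\ge\rho_\alpha>0$. The difficulty is uniformity in $k$. Unlike the Aharonov--Bohm case, $k\mapsto\tau_k$ is not an analytic function of $k$, so the extension trick of \eqref{eq:extension} does not apply directly.

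\emph{Splitting off a smooth model.} The remedy is to split off a smooth model. Let $\bar a=\frac1{2\pi}\int a$. Lemma~\ref{lem:em-eigenbasis} gives $\|b_k-1\|_{H^3}=O(1/|k|)$, hence
\begin{equation*}
  \lambda_k-(k+\alpha)^2=\langle a\phi_k,\phi_k\rangle+O(|k|^{-1})=\bar a+O(|k|^{-1}).
\end{equation*}
Setting $\tau_k'=\sqrt{\sigma_k^2+\bar a}$ for $|k|\ge K$, this yields $|\tau_k-\tau_k'|=O(|k|^{-2})$. Using $\HH_{\tau'_k}^2=I$, I write
\begin{equation*}
  \HH_{\tau_k}\HH_{\sigma_k}=(\HH_{\tau_k}\HH_{\tau'_k})(\HH_{\tau'_k}\HH_{\sigma_k}).
\end{equation*}

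The model factor has symbol $G_{a}(w)/G_{a}(v)$-type expressions in which the order $x\mapsto\sqrt{(x\pm\alpha)^2+\bar a}$ is real analytic for $x\ge K-1$. The proof of Lemma~\ref{lem:mixed} then goes through: the parameter $a=(\tau'-\sigma)/2=O(1/x)$ varies smoothly, and its $x$-derivatives only improve the bounds, using \eqref{eq:G-deriv}. So the Marcinkiewicz--de~Leeuw--Riesz argument of Proposition~\ref{prop:weighted-suff} applies verbatim.

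The correction factor acts on mode $k$ by a multiplier in $\tau$ alone. Writing $\log$ of its symbol as $\int_{\tau'_k}^{\tau_k}\partial_s\log m_{s,\tau_k'}\,ds$ and using \eqref{eq:G-deriv} with Stirling, I expect a Mikhlin constant $\lesssim|\tau_k-\tau'_k|\lesssim|k|^{-2}$ for the difference from $1$. Since $P_k$ is a contraction on $L^p$, the sum over $|k|\ge K$ of these differences converges absolutely in operator norm. The finitely many modes $|k|<K$ are handled one at a time via \eqref{eq:fixed-mode-mikhlin}. The same argument applies to $T^*$, whose symbols are the reciprocals $m_{\sigma_k,\tau_k}$.

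\emph{Main obstacle.} I expect the main obstacle to be the sharp asymptotic $\tau_k-\tau'_k=O(k^{-2})$ together with the uniform Mikhlin control of the correction symbol. If the first-order eigenvalue expansion only gives $O(1/k)$ after taking square roots, I would iterate the perturbation argument of Lemma~\ref{lem:em-eigenbasis} once more, using $a\in W^{3,\infty}$, to gain the extra power.
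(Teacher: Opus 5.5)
Your proposal is correct, and its outer frame matches the paper's: the factorization $S=JT$, the channelwise intertwining through $h_s=\HH_s r^2\HH_s$, Lemmas~\ref{lem:em-angular} and \ref{lem:em-eigenbasis} for $J,J^*$, and density for compatibility and inverses.

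The difference is in how you bound $T$ uniformly in $k$. The paper (Lemma~\ref{lem:diagonal-transplantation}) never separates a model. It interpolates the actual half-shifts $\frac12(\tau_n-\sigma_n)$ by the disjoint bump sum $h_\epsilon$ of \eqref{eq:app-interpolation}, with $|h_\epsilon|+|h_\epsilon'|\lesssim x^{-1}$. It then proves Marcinkiewicz bounds for the variable-order symbol $G_{h(x)}(w)/G_{h(x)}(v)$ through $\log F=\int_0^{h(x)}D_s\,ds$. This needs only $\sup_k(1+|k|)|\tau_k-\sigma_k|<\infty$, which already follows from the min--max bound $|\lambda_k-(k+\alpha)^2|\le\|a\|_\infty$, and it gives a lemma valid for arbitrary such shift sequences.

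You instead use the finer expansion $\lambda_k=(k+\alpha)^2+\bar a+O(|k|^{-1})$. This is valid: the cross term $2(k+\alpha)\langle D_\theta v_k,v_k\rangle$ is $O(|k|^{-1})$ since $\|v_k\|_{H^1}=O(|k|^{-1})$. It confines the non-smooth dependence on $k$ to a factor with $|\tau_k-\tau_k'|=O(|k|^{-2})$, which you sum in $\ell^1$. That step is sound:
\begin{itemize}
\item the symbols multiply exactly, $m_{\tau,\sigma}=m_{\tau,\tau'}\,m_{\tau',\sigma}$;
\item the one-dimensional Mikhlin norm of $m_{\tau_k,\tau_k'}(c-i\cdot)-1$ is $O(|\tau_k-\tau_k'|)$, because the digamma differences are bounded and $|\psi'|\lesssim R^{-1}$, with $R=1+x+|\xi|$.
\end{itemize}
The ``main obstacle'' you flag at the end is not one. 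Dividing $O(|k|^{-1})$ by $\tau_k+\tau_k'\sim2|k|$, as you already did, gives $O(|k|^{-2})$.

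One claim must be corrected. Your model still has an $x$-dependent shift $a(x)=\frac12\bigl(\tau'(x)-\sigma(x)\bigr)\approx\bar a/(4x)$. Hence $\partial_x\log F$ contains the term $a'(x)\bigl(\psi(w+a)-\psi(v+a)\bigr)$, with $w=\frac12(\sigma(x)+c-i\xi)$ and $v=\frac12(\sigma(x)+2-c+i\xi)$. This term tends to $\mp i\pi a'(x)\neq0$ as $\xi\to\pm\infty$; compare Lemma~\ref{lem:sing}(i). So when $\bar a\neq0$ the isotropic estimates \eqref{eq:mixed-est} and \eqref{eq:Q-est} fail for $|\xi|\gg x^2$. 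Your $x$-derivatives do not ``only improve the bounds'', and the proof of Lemma~\ref{lem:mixed} does not go through verbatim.

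What does hold is $|F_x|\lesssim x^{-2}+R^{-1}$ and $|F_{\xi x}|\lesssim R^{-2}+x^{-2}R^{-1}$. These still give the product conditions \eqref{eq:marcinkiewicz}, which is all Theorem~\ref{thm:marcinkiewicz} needs. This is exactly the phenomenon the paper notes after \eqref{eq:app-mixed-bounds}. Your model step therefore requires the same variable-order calculus as the paper's Step~3, with the better bound $|a'|\lesssim x^{-2}$ in place of $|h'|\lesssim x^{-1}$.

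Net comparison: your route trades the bump interpolation for an extra spectral asymptotic plus an $\ell^1$ remainder. The paper's route is shorter and covers general $O(|k|^{-1})$ shifts.
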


The $L^p$ assertion is a special case of
\cite[Theorem~2.13]{FSWZZ2026}. In order to make the paper
self contained, we include a proof under the hypotheses above 
in Appendix~\ref{app:transplantation}.

\begin{proof}[Proof of Proposition~\ref{prop:em-wave}]
  Use $\sigma_k,\tau_k,J,S$ from
  Proposition~\ref{prop:em-transplantation}.
  To simplify notation within this proof,
  following the general construction~\eqref{eq:em-K}, write
  \begin{equation*}
    \mathcal K_a=\mathcal K_{B_{\alpha,a}},\qquad
    \mathcal K_\alpha=\mathcal K_{B_{\alpha,0}},\qquad
    \mathcal K_0=\mathcal K_{D_\theta^2}.
  \end{equation*}
  More explicitly, if $\phi_k$ is the eigenbasis from
  Lemma~\ref{lem:em-eigenbasis}, then
  \begin{equation*}
    \begin{aligned}
      \mathcal K_a\Bigl(\sum_k f_k\phi_k\Bigr)
        =\sum_k(\HH_{\tau_k}f_k)\phi_k,\quad & \quad
      \mathcal K_\alpha\Bigl(\sum_k f_ke_k\Bigr)
        =\sum_k(\HH_{\sigma_k}f_k)e_k,\\
      \mathcal K_0\Bigl(\sum_k f_ke_k\Bigr)
        &=\sum_k(\HH_{|k|}f_k)e_k.
    \end{aligned}
  \end{equation*}
  Here the subscript $0$ denotes the free angular operator
  $D_\theta^2$, in parallel with $H_0=-\Delta$.
  All identities between these operators are initially on
  $L^2$. Formula~\eqref{eq:em-S} gives
  \begin{equation*}
    S=\mathcal K_aJ\mathcal K_\alpha.
  \end{equation*}
  Proposition~\ref{prop:em-transplantation} supplies the
  $L^p$ bounds for $S$ and $S^*$.

  Next put $V_\alpha=\mathcal K_\alpha\mathcal K_0$.
  Corollary~\ref{cor:AB-unweighted} implies that
  $V_\alpha,V_\alpha^*$ are bounded on every $L^p$ in
  question. Indeed, the partial wave formula differs from
  $V_\alpha$ by the angular phases
  $e^{\pm i\pi(\sigma_k-|k|)/2}$. Each phase sequence is
  constant on each sufficiently large positive or negative
  tail. The corresponding multiplier is a linear combination
  of the angular Riesz projections and finitely many angular
  projections, all bounded on $L^p$. It follows that
  \begin{equation}\label{eq:em-U}
    U=SV_\alpha=\mathcal K_aJ\mathcal K_0
  \end{equation}
  and $U^{-1}=U^*$ are bounded on $L^p$.

  Define the diagonal angular multipliers
  \begin{equation}\label{eq:em-D}
    D_\pm e_k
    =e^{\pm i\pi(\tau_k-|k|)/2}e_k.
  \end{equation}
  By \eqref{eq:em-basis-estimate},
  $\tau_k-\sigma_k=O(|k|^{-1})$. Consequently, the sequence
  in \eqref{eq:em-D} equals
  $e^{\pm i\pi\alpha/2}$ on the positive tail and
  $e^{\mp i\pi\alpha/2}$ on the negative tail, up to an
  $\ell^2(\Z)$ remainder. The constant tails give angular
  Riesz projections, and the remainder again has an $L^1$
  convolution kernel. This proves boundedness of $D_\pm$
  and $D_\pm^{-1}$ on $L^p$.

  The Fourier transform in polar coordinates can be written
  for $f=\sum_k f_ke_k$
  \begin{equation}\label{eq:em-free-Fourier}
    \mathcal F_\pm\Bigl(\sum_k f_ke_k\Bigr)
        =\sum_k e^{\mp i\pi|k|/2}
          (\HH_{|k|}f_k)e_k =
      \mathcal F_\pm
        =e^{\mp i\pi|D_\theta|/2}\mathcal K_0.
  \end{equation}
  Proposition~\ref{prop:em-L2}, applied with $B_0=D_\theta^2$,
  now yields the exact factorization
  \begin{equation}\label{eq:em-factorization}
    W_\pm^{\alpha,a}
    =U D_\pm\mathcal F_\pm^{-1}J^*\mathcal F_\pm.
  \end{equation}
  For clarity, define $T_\pm e_k=e^{\pm i\pi\tau_k/2}e_k$.
  The diagonal operators commute with $\mathcal K_0$, and
  \eqref{eq:em-free-Fourier} gives
  \begin{equation*}
    U D_\pm\mathcal F_\pm^{-1}J^*\mathcal F_\pm
      =\mathcal K_aJ T_\pm J^*\mathcal F_\pm =
      \mathcal K_a e^{\pm i\pi\sqrt{B_{\alpha,a}}/2}
        \mathcal F_\pm.
  \end{equation*}
  The last expression is exactly \eqref{eq:em-L2} for the
  free comparison operator. This also checks both signs of
  the scattering phases.

  Lemmas~\ref{lem:em-angular} and \ref{lem:em-eigenbasis}
  control the Fourier conjugate of $J^*$ and its inverse.
  Every factor in \eqref{eq:em-factorization} is therefore
  bounded and invertible on $L^p$. Proposition~\ref{prop:em-L2}
  gives existence and unitarity on $L^2$, and density extends
  the inverse identities to $L^p$. Finally, the same
  proposition, or the chain rule for these unitary wave
  operators, gives
  \begin{equation*}
    W_\pm(H_{\alpha,a},H_\alpha)
    =W_\pm^{\alpha,a}(W_\pm^\alpha)^*,
    \qquad W_\pm^\alpha=W_\pm(H_\alpha,-\Delta).
  \end{equation*}
  Corollary~\ref{cor:AB-unweighted} proves the asserted
  bounds for this pair as well.
\end{proof}

\subsection{The general magnetic coefficient}

\begin{proof}[Proof of Theorem~\ref{thm:general}]
  The function
  \begin{equation*}
    g(\theta)=\exp\left(i\int_0^\theta
    (\alpha-b(s))\,ds\right)
  \end{equation*}
  is periodic and has modulus one. Direct differentiation
  gives $(D_\theta+b)M_g=M_g(D_\theta+\alpha)$. This identity
  on the punctured minimal domain extends to the Friedrichs
  quadratic forms, giving
  $\mathcal L_{\mathbf A,a}=M_gH_{\alpha,a}M_{\bar g}$.
  In particular, the angular nonnegativity assumptions are
  equivalent. Proposition~\ref{prop:em-L2} gives
  \begin{equation}\label{eq:em-gauge-wave}
    W_\pm(\mathcal L_{\mathbf A,a},-\Delta)
    =M_gW_\pm^{\alpha,a}
    \mathcal F_\pm^{-1}M_{\bar g}\mathcal F_\pm.
  \end{equation}
  Indeed, both the angular functional calculus and
  $\mathcal K_B$ conjugate by $M_g$, so their product in
  \eqref{eq:em-L2} has exactly this form.
  Multiplication by $g$ is an $L^p$ isometry, while the last
  factor in \eqref{eq:em-gauge-wave} and its inverse are
  bounded homogeneous Fourier multipliers by Mikhlin's
  theorem. Proposition~\ref{prop:em-wave} proves the free
  comparison.
  Both operators of the relative pair conjugate by the same
  $M_g$, so
  \begin{equation*}
    W_\pm(\mathcal L_{\mathbf A,a},\mathcal L_{\mathbf A,0})
    =M_g W_\pm(H_{\alpha,a},H_\alpha)M_{\bar g}.
  \end{equation*}
  The relative assertion follows from the same proposition.
\end{proof}

\begin{remark}[Regularity and exceptional flux]
\label{rem:em-scope}
  The $L^2$ formula in Proposition~\ref{prop:em-L2} does not
  require smoothness or the exclusion of $\frac12\Z$.
  These restrictions enter the present $L^p$ proof through
  the angular eigenbasis estimates. Reaching
  $a,b\in W^{1,\infty}$ would require replacing the $H^3$
  argument by a multiplier estimate with less angular
  regularity. Flux in $\frac12\Z$ requires treating the
  degenerate angular clusters rather than inverting
  $D_\theta(D_\theta+2(k+\alpha))$ on all functions of mean
  zero. Neither extension is asserted here, and no weighted
  or endpoint conclusion for general coefficients is
  included. The substantive extra step beyond composing
  spectral intertwiners is the scattering normalization
  \eqref{eq:em-factorization}, including the bounded Fourier
  conjugate of the angular change of basis.
\end{remark}

\begin{remark}[Spectral intertwining at half flux]
\label{rem:half-flux-intertwining}
  Suppose that $a,b\in W^{1,\infty}(\T;\R)$, $\alpha=1/2$,
  $B_{\mathbf A,a}\ge0$, and
  $a(\pi-\theta)=a(\pi+\theta)$ for $0\le\theta\le\pi$.
  By \cite[Theorem~2.13]{FSWZZ2026}, there is a unitary
  spectral intertwiner $S$ from $\mathcal L_{\mathbf A,0}$
  to $\mathcal L_{\mathbf A,a}$ such that $S$ and $S^*$
  are bounded on every $L^p$, $1<p<\infty$.
  With $g$ as in the preceding proof, set
  \begin{equation*}
    U=S M_g W_+(H_{1/2},-\Delta).
  \end{equation*}
  The gauge identity and Corollary~\ref{cor:AB-unweighted}
  show that $U$ is unitary on $L^2$, while $U$ and $U^*$
  extend to bounded, mutually inverse operators on every
  $L^p$, $1<p<\infty$. For every bounded Borel function $m$,
  \begin{equation*}
    m(\mathcal L_{\mathbf A,a})
    =U m(-\Delta)U^*\qquad\text{on }L^2.
  \end{equation*}
  Thus spectral similarity with the Laplacian, and the
  resulting $L^p$ transference, remain available at $\alpha=1/2$
  under this symmetry assumption. This composition
  does not however identify $U$ with the wave operator
  of the pair $(\mathcal L_{\mathbf A,a},-\Delta)$.
\end{remark}

\section{Sharpness and the endpoints
\texorpdfstring{$p=1$ and $p=\infty$}{p=1 and p=infty}}
\label{sec:endpoints}

We begin by collecting the features of the symbol 
$m_{\nu,\mu}(z)$ defined in \eqref{eq:symbol} that obstruct
the endpoint bounds, together with the small radius asymptotic
needed for the sharp weighted range.

\begin{lemma}[Limits, poles, and radial asymptotics]
\label{lem:sing}
Let $\mu\ge0$ and let $\nu>-1$ be real. Set
$\phi=\frac\pi2(\nu-\mu)$.
\begin{enumerate}
\item[(i)] For every $c\in(-\nu,\mu+2)$,
           $\displaystyle\lim_{\tau\to\pm\infty}
           m_{\nu,\mu}(c-i\tau)
           =e^{\mp i\phi}$.
\item[(ii)] If $\mu=0$ and $\nu\ne0$, then $m_{\nu,0}$ extends
            meromorphically to $-\nu<\Re z<4$ with a single,
            simple pole at $z=2$, of residue $-\nu$, and for
            every $h\in C_c^\infty(0,\infty)$
\begin{equation}\label{eq:tail}
  (\HH_\nu\HH_0h)(r)=\nu\,r^{-2}\int_0^\infty h(s)\,s\dd
  s+O(r^{-3}),
  \qquad r\to\infty.
\end{equation}
\item[(iii)] Suppose that $(\mu-\nu)/2\notin\{0,-1,-2,\ldots\}$.
             Then, for every $h\in C_c^\infty(0,\infty)$,
\begin{equation}\label{eq:origin-asymptotic}
  (\HH_\nu\HH_\mu h)(r)
  =b_{\nu,\mu}\,r^\nu\int_0^\infty h(s)s^{-\nu-1}\dd s
  +O(r^{\nu+1}),
  \qquad r\to0,
\end{equation}
where
\begin{equation}\label{eq:origin-coefficient}
  b_{\nu,\mu}
  =\frac{2\Gamma\bigl(\frac{\mu+\nu+2}{2}\bigr)}
  {\Gamma(\nu+1)\Gamma\bigl(\frac{\mu-\nu}{2}\bigr)}\ne0.
\end{equation}
\end{enumerate}
\end{lemma}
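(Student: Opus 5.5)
Items (i)--(iii) all come from the explicit Gamma-function form \eqref{eq:symbol} of $m_{\nu,\mu}$, together with the Mellin inversion formula proved in Proposition~\ref{prop:symbol}. For $h\in C_c^\infty(0,\infty)$ that formula reads
\[
  (\HH_\nu\HH_\mu h)(r)=\frac1{2\pi i}\int_{\Re z=c}m_{\nu,\mu}(z)\Mell h(z)r^{-z}\,dz .
\]
Large $r$ and small $r$ asymptotics are then read off by shifting the contour past the first pole of the meromorphically continued integrand and collecting the residue.

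\textbf{Item (i).} Write $m_{\nu,\mu}(c-i\tau)=G_a(\zeta_1)/G_a(\zeta_2)$ as in \eqref{eq:symbol-G}, with $a=(\nu-\mu)/2$, $\zeta_1=(\mu+c-i\tau)/2$ and $\zeta_2=(\mu+2-c+i\tau)/2$. For $|\tau|$ large both $\zeta_j$ lie in a region where \eqref{eq:G-asym} applies. This gives $m=(\zeta_1/\zeta_2)^a(1+O(|\tau|^{-1}))$. As $\tau\to+\infty$ we have $\arg\zeta_1\to-\pi/2$ and $\arg\zeta_2\to\pi/2$, while $|\zeta_1|/|\zeta_2|\to1$. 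Hence $(\zeta_1/\zeta_2)^a\to e^{-i\pi a}=e^{-i\phi}$. The case $\tau\to-\infty$ is symmetric and gives $e^{+i\phi}$. The only point needing care is the branch: $\arg\zeta_1-\arg\zeta_2$ stays in $(-\pi,\pi)$ for large $|\tau|$, so the principal branches combine correctly.

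\textbf{Item (ii).} With $\mu=0$,
\[
  m_{\nu,0}(z)=\frac{\Gamma(\frac{\nu+z}2)\Gamma(\frac{2-z}2)}{\Gamma(\frac{\nu+2-z}2)\Gamma(\frac z2)} .
\]
On $-\nu<\Re z<4$ the only possible singularity is the pole of $\Gamma((2-z)/2)$ at $z=2$. Near $z=2$ we have $\Gamma((2-z)/2)\sim -2/(z-2)$. The remaining factors at $z=2$ give $\Gamma(\frac{\nu+2}{2})/\bigl(\Gamma(\frac\nu2)\Gamma(1)\bigr)=\nu/2$, which is nonzero since $\nu\ne0$. So the residue is $-\nu$. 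For \eqref{eq:tail} I shift the contour from $\Re z=c<2$ to $\Re z=3$. This is justified by Stirling: $m_{\nu,0}$ grows at most polynomially on vertical lines (the Step~1 argument of Proposition~\ref{prop:symbol} still applies away from the pole), and $\Mell h$ decays rapidly, so the horizontal segments vanish. Moving the contour to the right crosses the pole with a minus sign. The residue of $m\,\Mell h\,r^{-z}$ at $z=2$ is $-\nu\Mell h(2)r^{-2}$, so the term picked up is $+\nu r^{-2}\Mell h(2)=\nu r^{-2}\int h(s)s\,ds$. The integral on $\Re z=3$ is $O(r^{-3})$.

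\textbf{Item (iii).} Here I continue the integrand to the left, past $\Re z=-\nu$. The first pole there is at $z=-\nu$, coming from $\Gamma(\frac{\nu+z}2)$, whose residue in $z$ is $2$. The factor $\Gamma(\frac{\mu+2-z}2)$ is regular there. The denominator factor $\Gamma(\frac{\mu+z}2)$ evaluates to $\Gamma(\frac{\mu-\nu}2)$, which is finite because of the hypothesis $(\mu-\nu)/2\notin\{0,-1,\ldots\}$. So the residue of $m_{\nu,\mu}$ at $z=-\nu$ is
\[
  \frac{2\,\Gamma(\frac{\mu+\nu+2}2)}{\Gamma(\nu+1)\Gamma(\frac{\mu-\nu}2)}=b_{\nu,\mu},
\]
which is nonzero because $1/\Gamma$ of finite arguments is nonzero. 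This residue contributes $b_{\nu,\mu}r^{\nu}\Mell h(-\nu)$, and $\Mell h(-\nu)=\int h(s)s^{-\nu-1}ds$. The next singularities are at $z=-\nu-2$, or zeros coming from the denominator. Hence the contour can be moved to $\Re z=-\nu-1$, where the remaining integral is $O(r^{\nu+1})$. This is legitimate because $1/\Gamma(\frac{\mu+z}2)$ is entire, and any pole of $\Gamma(\frac{\mu+2-z}2)$ lies at $\Re z\ge\mu+2$, so none is crossed.

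The main obstacle is justifying the contour shifts beyond the strip $S_{\nu,\mu}$. This requires polynomial bounds for $m_{\nu,\mu}$ on vertical lines outside the strip. By Stirling, $|m|\lesssim|\Im z|^{\,\mathrm{const}}$ uniformly on compact $\Re z$ ranges away from the poles. Since $\Mell h$ decays faster than any polynomial, this is enough. A further point is that the two-sided improper limit defining the residue expansion must agree with the function given by Proposition~\ref{prop:symbol}; this follows from the Cauchy argument already used in Step~4 of that proof.
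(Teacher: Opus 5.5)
Your proposal is correct and follows the paper's proof essentially step for step. For (i) you use the Gamma-ratio asymptotic with $\arg\zeta_1-\arg\zeta_2\to\mp\pi$. For (ii) and (iii) you shift the inverse Mellin contour to $\Re z=3$, picking up $-\Res_{z=2}$, and to $\Re z=-\nu-1$, picking up $+\Res_{z=-\nu}$, with the same residue computations and nonvanishing argument.

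One slip in (i): the region $\Omega_a$ of Lemma~\ref{lem:gamma} need not contain $\zeta_1$ or $\zeta_2+a$ when $c$ is near the ends of $(-\nu,\mu+2)$. For example, $\Re(\zeta_2+a)=0$ in the case $(\nu,\mu,c)=(0,|\alpha|,2)$ used later in Proposition~\ref{prop:endpoints}. So cite the sectorial asymptotic \cite[Eq.~5.11.12]{NIST} directly rather than \eqref{eq:G-asym}, as the paper does.
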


\begin{proof}
(i) In \eqref{eq:symbol-G} with $z=c-i\tau$ we have
$\zeta_1=\frac{\mu+c-i\tau}2$, $\zeta_2=\frac{\mu+2-c+i\tau}2$,
so $|\zeta_1|/|\zeta_2|\to1$, $\arg\zeta_1\to\mp\pi/2$ and
$\arg\zeta_2\to\pm\pi/2$ as $\tau\to\pm\infty$. The general
Gamma ratio asymptotic \cite[Eq.~5.11.12]{NIST}, applied in
sectors avoiding the negative real axis, gives
$$m_{\nu,\mu}(c-i\tau)=(|\zeta_1|/|\zeta_2|)^a
e^{ia(\arg\zeta_1-\arg\zeta_2)} (1+O(|\tau|^{-1}))\to e^{\mp
i\pi a}=e^{\mp i\phi}.$$

(ii) For $\mu=0$,
$m_{\nu,0}(z)=\Gamma(\frac{\nu+z}2)\Gamma(\frac{2-z}2)
/\bigl(\Gamma(\frac{\nu+2-z}2)\Gamma(\frac z2)\bigr)$. On
$-\nu<\Re z<4$ the only singularity is the simple pole of
$\Gamma(\frac{2-z}2)$ at $z=2$; since
$\Gamma(\frac{2-z}2)=-\frac2{z-2}+O(1)$ and the other factors
equal $\Gamma(\frac\nu2+1)/\Gamma(\frac\nu2)=\frac\nu2$ at
$z=2$, the residue is $-\nu$. Let $h\in C_c^\infty(0,\infty)$,
$f=\HH_\nu\HH_0h$ and $F=m_{\nu,0}\Mell h$. By
Proposition~\ref{prop:symbol} and its proof, $f(r)=\frac1{2\pi
i}\int_{\Re z=1}F(z)r^{-z}dz$, and $F$ is rapidly decreasing on
vertical lines, uniformly on $\{1\le\Re z\le3,\
|z-2|\ge\frac12\}$: indeed $\Mell h$ is, and $m_{\nu,0}$ is
bounded there, because by Stirling's formula
\cite[Eq.~5.11.9]{NIST}
$$|\Gamma(x+iy)|=\sqrt{2\pi}\,|y|^{x-1/2}e^{-\pi|y|/2}(1+o(1))$$
as $|y|\to\infty$, uniformly for $x$ in compact sets, and the
exponents $x-\frac12$ of the four Gamma factors in $m_{\nu,0}$
add up to zero. Shifting the contour to $\Re z=3$ across the
pole at $z=2$ gives
\begin{equation*}
  f(r)=-\Res_{z=2}\bigl(F(z)r^{-z}\bigr)+\frac1{2\pi
  i}\int_{\Re z=3}F(z)r^{-z}dz
  =\nu\,\Mell h(2)\,r^{-2}+O(r^{-3}),
\end{equation*}
which is \eqref{eq:tail}, since $\Mell h(2)=\int_0^\infty
h(s)s\,ds$. Here the horizontal sides of the contour tend to
zero by the rapid vertical decay of $F$. The minus sign in front
of the residue is the sign obtained when the inverse Mellin
contour is moved to the right. The last integral is $O(r^{-3})$
because $F(3+i\cdot)\in L^1(\R)$.

(iii) The meromorphic continuation of \eqref{eq:symbol} has a
simple pole at $z=-\nu$, unless it is cancelled by a pole of
$\Gamma((\mu+z)/2)$ in the denominator. Under the stated
assumption there is no cancellation, and
\begin{equation*}
  \Res_{z=-\nu}m_{\nu,\mu}(z)
  =\frac{2\Gamma\bigl(\frac{\mu+\nu+2}{2}\bigr)}
  {\Gamma(\nu+1)\Gamma\bigl(\frac{\mu-\nu}{2}\bigr)}
  =b_{\nu,\mu}.
\end{equation*}
The next possible pole of $\Gamma((\nu+z)/2)$ is at $z=-\nu-2$,
while the reciprocal Gamma factors can only contribute zeros.
Hence, starting from the inverse Mellin representation on any
line in $S_{\nu,\mu}$ and shifting the contour to $\Re z=-\nu-1$
crosses only the pole at $z=-\nu$. As above, the horizontal
integrals vanish. A shift to the left contributes the residue
with a plus sign, and therefore
\begin{equation*}
  f(r)=b_{\nu,\mu}\Mell h(-\nu)r^\nu
  +\frac1{2\pi i}\int_{\Re z=-\nu-1}m_{\nu,\mu}(z)\Mell
  h(z)r^{-z}\dd z.
\end{equation*}
The vertical Gamma estimates make the integrand, apart from
$r^{-z}$, integrable on this line. The last integral is
consequently $O(r^{\nu+1})$, while $\Mell h(-\nu)=\int_0^\infty
h(s)s^{-\nu-1}\dd s$; this is \eqref{eq:origin-asymptotic}.
\end{proof}

\begin{lemma}\label{lem:L1-mult}
Let $S$ be a bounded operator on $L^1(\R)$ such that, for $g\in
C_c^\infty(\R)$, $\widehat{Sg}=\sigma\,\widehat g$ for a
continuous function $\sigma$ on $\R$. Then $\sigma$ is the
Fourier--Stieltjes transform of a finite complex Borel measure
on $\R$, and if the limits $\lim_{\tau\to+\infty}\sigma(\tau)$
and $\lim_{\tau\to-\infty}\sigma(\tau)$ both exist, they are
equal.
\end{lemma}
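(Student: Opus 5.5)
The plan has two parts: first identify $S$ with convolution by a finite measure, then read off the two limits from the Riemann--Lebesgue lemma.

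\emph{Step 1: $S$ is convolution by a measure.} The operator $S$ commutes with translations. Indeed, for $g\in C_c^\infty(\R)$ and the translation $\tau_y$, both $S\tau_yg$ and $\tau_ySg$ have Fourier transform $e^{-iy\tau}\sigma(\tau)\widehat g(\tau)$. Since both lie in $L^1$, they coincide, and density of $C_c^\infty$ in $L^1$ extends commutation to all of $L^1$. By Wendel's theorem (the classical characterization of translation invariant bounded operators on $L^1$), there is a finite complex Borel measure $\mu$ with $Sg=\mu*g$ and $\|\mu\|\le\|S\|$. Hence $\widehat\mu(\tau)\widehat g(\tau)=\sigma(\tau)\widehat g(\tau)$ for every $g\in C_c^\infty$. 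Choosing, near any given $\tau_0$, a $g$ with $\widehat g(\tau_0)\ne0$ (for instance a Gaussian times a modulation, approximated in $C_c^\infty$) shows $\sigma=\widehat\mu$ everywhere, using continuity of both functions. If one prefers to avoid Wendel's theorem, the same conclusion follows by applying $S$ to an approximate identity $\varphi_\varepsilon$: the functions $S\varphi_\varepsilon$ are bounded in $L^1$, so a weak-$*$ limit point $\mu$ in the space of measures exists, and its Fourier transform is the pointwise limit $\sigma(\tau)\widehat\varphi(\varepsilon\tau)\to\sigma(\tau)$.

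\emph{Step 2: equality of the limits.} Decompose $\mu=\mu_d+\mu_c$ into its discrete part $\mu_d=\sum_j c_j\delta_{y_j}$, with $\sum_j|c_j|<\infty$, and its continuous part. Suppose $\sigma(\tau)\to L_\pm$ as $\tau\to\pm\infty$. The key tool is Wiener's lemma:
\[
\lim_{T\to\infty}\frac1{2T}\int_{-T}^T|\widehat\mu_c(\tau)|^2\,d\tau=0 .
\]
In particular, Cesàro averages of $\widehat\mu_c$ over long intervals tend to zero. Equivalently, the Cesàro mean of $\widehat\mu(\tau)e^{i\tau y}$ over $[A,A+T]$ as $T\to\infty$ picks out the atom $c$ of $\mu$ at $y$, uniformly in the shift $A$.

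Apply this to the average of $\sigma$ over $[T,2T]$, which tends to $L_+$. The same average equals the average of $\widehat\mu_d$ plus $o(1)$. Comparing it with the average over $[-2T,-T]$ gives
\[
L_+-L_-=\lim_{T\to\infty}\frac1T\int_T^{2T}\bigl(\widehat\mu_d(\tau)-\widehat\mu_d(-\tau)\bigr)\,d\tau .
\]
A cleaner way to finish: since $\sigma$ has limits at $\pm\infty$, every Bohr mean $\lim_{T}\frac1{2T}\int_{-T}^T\sigma(\tau)e^{i\tau y}\,d\tau$ vanishes for $y\ne0$ and equals $(L_++L_-)/2$ for $y=0$. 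But these Bohr means are exactly the atoms $c_y$ of $\mu$. Therefore $\mu_d=\frac{L_++L_-}2\,\delta_0$. Then $\widehat\mu_c=\sigma-\frac{L_++L_-}2$ tends to $\pm\frac{L_+-L_-}2$ at $\pm\infty$. Wiener's lemma forces the mean of $|\widehat\mu_c|^2$ to vanish, while that mean equals $\frac{|L_+-L_-|^2}4$. Hence $L_+=L_-$.

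\emph{Main obstacle.} The main step is Step 2. Riemann--Lebesgue alone handles only absolutely continuous measures, so the argument genuinely needs Wiener's lemma on $L^2$ averages of Fourier--Stieltjes transforms, together with the identification of atoms as Bohr means.
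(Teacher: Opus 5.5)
Your proof is correct. Step 1 matches the paper: translation invariance plus the Wendel/Stein--Weiss representation $Sg=\mu*g$. Step 2, however, takes a genuinely heavier route than the paper's.

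\textbf{How the paper handles Step 2.} The paper never decomposes $\mu$. It averages the odd part of $\widehat\mu$ directly:
\[
\frac1T\int_0^T\bigl(\widehat\mu(\tau)-\widehat\mu(-\tau)\bigr)\,d\tau=-2i\int_\R\frac{1-\cos(Tx)}{Tx}\,d\mu(x).
\]
The kernel is bounded by $1$ and tends to $0$ for each $x\ne0$. At $x=0$ the integrand $\sin(x\tau)$ vanishes identically, so an atom at the origin contributes nothing. Dominated convergence therefore sends the right side to $0$, while the left side tends to $L_+-L_-$.

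\textbf{How your route differs.} You identify the atoms of $\mu$ as Bohr means and deduce $\mu_d=\frac{L_++L_-}{2}\delta_0$. You then need Wiener's lemma to rule out a jump of $\widehat{\mu_c}$ at $\pm\infty$. Every step checks out, and your argument gives extra structural information: $\mu$ has no atoms away from the origin. But your closing claim that Wiener's lemma is genuinely needed is not right. Riemann--Lebesgue alone is indeed insufficient, yet a one-line Fubini plus dominated convergence computation with the sine kernel handles every finite measure at once. Your intermediate identity for $L_+-L_-$ in terms of $\widehat{\mu_d}$ could be finished by exactly that computation. Applying it to $\mu$ itself makes the discrete/continuous decomposition unnecessary.

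\textbf{A small gap in your optional alternative to Wendel's theorem.} Weak-$*$ convergence in $C_0(\R)^*$ does not by itself give pointwise convergence of Fourier transforms. Mass can escape to infinity: $\delta_n\to0$ weak-$*$, yet $\widehat{\delta_n}(\tau)=e^{-in\tau}$ does not converge to $0$. So the limit point $\mu$ should be identified by testing instead. For $g\in C_c^\infty(\R)$,
\[
\mu*g=\lim(S\varphi_\varepsilon)*g=\lim S(\varphi_\varepsilon*g)=Sg.
\]
The first limit is in the distributional sense and is justified because $\check g*h\in C_0(\R)$ for $h\in C_c(\R)$. The middle equality holds because both sides have Fourier transform $\sigma\,\widehat{\varphi_\varepsilon}\,\widehat g$.
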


\begin{proof}
$S$ commutes with translations (this is true on
$C_c^\infty(\R)$, since $S$ is a Fourier multiplier there, and
extends by density), so by a classical theorem
\cite[Ch.~I, Thm.~3.19]{SteinWeiss1971} there is a finite
Borel measure
$\lambda$ with $Sg=\lambda*g$; then $\widehat\lambda\,\widehat g
=\sigma\widehat g$ for all $g\in C_c^\infty$, whence
$\sigma=\widehat\lambda$. Next,
\begin{equation*}
  \frac1T\int_0^T\bigl(\widehat\lambda(\tau)
    -\widehat\lambda(-\tau)\bigr)d\tau
  =-2i\int_\R\frac{1-\cos(Tx)}{Tx}\,d\lambda(x)
    \xrightarrow[T\to\infty]{}0
\end{equation*}
by dominated convergence, since $|(1-\cos y)/y|\le1$ and
$(1-\cos(Tx))/(Tx)\to0$ for every $x\neq0$. If $\sigma(\tau)\to
L_\pm$ as $\tau\to\pm\infty$, the left hand side tends to
$L_+-L_-$, so $L_+=L_-$.
\end{proof}

We can now prove the strong endpoint failures in
Theorem~\ref{thm:weighted}.

\begin{proposition}\label{prop:endpoints}
Let $\alpha\ne0$. Then neither $W_\pm$ nor $W_\pm^*$ is bounded
on $L^1(\R^2)$ or on $L^\infty(\R^2)$.
\end{proposition}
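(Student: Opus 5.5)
The plan is to reduce everything to single angular channels and to obtain $L^\infty$ failures from $L^1$ failures by duality. Since $W_\pm$ commutes with every $P_k$ (Proposition~\ref{prop:partial-wave}) and $P_k$ is a contraction on $L^1$ and $L^\infty$, boundedness of $W_\pm$ on $L^1(\R^2)$ would imply boundedness of the channel operator $e^{\mp i\delta_k}\HH_{\nu_k}\HH_{\mu_k}$ on $L^1(\R_+,r\,dr)$. Here we identify $e^{ik\theta}h(r)$ with $h$, which is legitimate because the two norms agree up to a constant. The same holds for $W_\pm^*$ with $\HH_{\mu_k}\HH_{\nu_k}$. Moreover, $W_\pm$ bounded on $L^\infty$ would force $W_\pm^*$ bounded on $L^1$, by restricting the adjoint action to $L^1\cap L^2$. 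It therefore suffices to prove that \emph{neither} $W_\pm$ nor $W_\pm^*$ is bounded on $L^1(\R^2)$.

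\emph{First obstruction: the $r^{-2}$ tail.} Take the channel $k=0$ for $W_\pm$. There $\mu_0=0$ and $\nu_0=|\alpha|\neq0$, so Lemma~\ref{lem:sing}(ii) gives
\[
(\HH_{|\alpha|}\HH_0h)(r)=|\alpha|\,r^{-2}\int_0^\infty h(s)\,s\dd s+O(r^{-3})
\]
for $h\in C_c^\infty(0,\infty)$. If $\int h\,s\,ds\neq0$, this is not in $L^1(r\,dr)$ near infinity, since $\int^\infty r^{-2}\,r\,dr$ diverges logarithmically. Hence $W_\pm$ maps the $L^1$ function $h(r)$ outside $L^1$, and $W_\pm$ is unbounded on $L^1$ for every $\alpha\neq0$. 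By duality, $W_\pm^*$ is unbounded on $L^\infty$.

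For $W_\pm^*$ on $L^1$ when $\alpha\in\Z\setminus\{0\}$, I would use the channel $k=-\alpha$. There $\nu_k=0$ and $\mu_k=|\alpha|\neq0$, so the channel operator of $W_\pm^*$ is $e^{\pm i\delta_k}\HH_{|\alpha|}\HH_0$. This is again exactly of the form in Lemma~\ref{lem:sing}(ii), and the same $r^{-2}$ tail argument applies.

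\emph{Second obstruction: unequal limits of the symbol.} For $\alpha\notin\Z$ the channel $k=-\alpha$ is unavailable, and I would instead use Lemma~\ref{lem:L1-mult} at $p=1$, i.e.\ $c=2$. Take $k\ge K$ large, so that $\mu_k=k>0$ and $\nu_k=k+\alpha>0$. Then $2\in S_{\mu_k,\nu_k}$, and by \eqref{eq:symbol-fourier} the conjugated channel operator $\mathcal U_1\HH_{\mu_k}\HH_{\nu_k}\mathcal U_1^{-1}$ is a Fourier multiplier on $L^1(\R)$ with continuous symbol $m_{\mu_k,\nu_k}(2-i\tau)$. By Lemma~\ref{lem:sing}(i) its limits as $\tau\to\pm\infty$ are $e^{\mp i\frac\pi2(\mu_k-\nu_k)}=e^{\pm i\pi\alpha/2}$. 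These differ unless $\alpha\in2\Z$, and in particular they differ for $\alpha\notin\Z$. Lemma~\ref{lem:L1-mult} then forbids $L^1$ boundedness of this channel, hence of $W_\pm^*$; by duality, $W_\pm$ fails on $L^\infty$. The same argument applied to $W_\pm$ with $m_{\nu_k,\mu_k}$ gives an independent proof for $W_\pm$ on $L^1$ when $\alpha\notin2\Z$.

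The main obstacle is the case split between integer and non-integer $\alpha$. The symbol-limit argument fails exactly for even integer $\alpha$, since $\nu_k-\mu_k\equiv\alpha\pmod 2$ in every channel. The tail argument for $W_\pm^*$ needs a channel with $\nu_k=0$, which exists only for integer $\alpha$. Combining the two routes as above covers all $\alpha\neq0$. The remaining routine points to check are the identification of the $L^1$ norm of $e^{ik\theta}h$ with $\|h\|_{L^1(r\,dr)}$, and that $\mathcal U_1$ carries $L^1(r\,dr)$ isometrically to $L^1(\R)$ with the symbol evaluated on $\Re z=2$.
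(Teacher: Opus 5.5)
Your proof is correct and follows the paper's argument: reduction to single angular channels, the $r^{-2}$ tail of Lemma~\ref{lem:sing}(ii) in the modes $k=0$ (for $W_\pm$) and $k=-\alpha$ (for $W_\pm^*$ at integer flux), the unequal limits of the symbol at $\tau\to\pm\infty$ combined with Lemma~\ref{lem:L1-mult}, and endpoint duality. The only difference is cosmetic. The paper runs the symbol-jump argument in the radial channel $k=0$, using $\HH_0\HH_{|\alpha|}$, which covers all $\alpha\notin2\Z$, whereas you use a large channel $k\ge K$ with $\HH_k\HH_{k+\alpha}$; both choices work equally well.
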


\begin{proof}
\emph{Step 0: reduction to a mode.} Suppose $W_\pm$ is bounded
on $L^p(\R^2)$, $p\in\{1,\infty\}$, with constant $C$. Since
$P_k$ commutes with $W_\pm$ and is a contraction on $L^p$, for
$f=e^{ik\theta}h(r)$ with $h\in L^2\cap L^p(\R_+,r\,dr)$ we get
$W_\pm f=e^{ik\theta}(W_{\pm,k}h)(r)$ and
$\|W_{\pm,k}h\|_{L^p(r\,dr)}\le C\|h\|_{L^p(r\,dr)}$: each
$W_{\pm,k}=e^{\mp i\delta_k}\HH_{\nu_k}\HH_{\mu_k}$ is bounded
on $L^p(\R_+,r\,dr)$. Likewise, if $W_\pm$ is bounded on
$L^\infty$, then $W_\pm^*$ is bounded on $L^1$: for $f\in
L^1\cap L^2$,
\begin{equation*}
  \|W_\pm^*f\|_1=\sup_{g}|\langle W_\pm^*f,g\rangle|
  =\sup_{g}|\langle f,W_\pm g\rangle|\le C\|f\|_1,
\end{equation*}
the supremum being taken over simple functions $g$ with bounded
support and $\|g\|_\infty\le1$; then each $(W_{\pm,k})^*=e^{\pm
i\delta_k}\HH_{\mu_k}\HH_{\nu_k}$ is bounded on
$L^1(\R_+,r\,dr)$.

\emph{Step 1: the $|x|^{-2}$ tail.} Let $\nu>0$. Then
$\HH_\nu\HH_0$ is not bounded on $L^1(\R_+,r\,dr)$: taking $h\in
C_c^\infty(0,\infty)$, $h\ge0$, $h\not\equiv0$, we have $h\in
L^1\cap L^2$, while by \eqref{eq:tail}
$|\HH_\nu\HH_0h(r)|\ge\frac\nu2 r^{-2}\int h\,s\,ds$ for large
$r$, which is not integrable with respect to $r\,dr$.

\emph{Step 2: $L^1$.} In the mode $k=0$ we have $\mu_0=0$ and
$\nu_0=|\alpha|>0$, so $W_{\pm,0}=e^{\mp
i\delta_0}\HH_{|\alpha|}\HH_0$ is unbounded on $L^1(r\,dr)$ by
Step~1, and $W_\pm$ is unbounded on $L^1(\R^2)$ by Step~0.
Explicitly, for radial $f(x)=h(|x|)$ as in Step~1,
\begin{equation*}
  W_\pm f(x)=e^{\mp
  i\delta_0}\Bigl(|\alpha|\,|x|^{-2}\int_0^\infty
  h(s)\,s\,ds+O(|x|^{-3})\Bigr),
  \qquad |x|\to\infty.
\end{equation*}

\emph{Step 3: $L^\infty$, integer flux.} Let
$\alpha\in\Z\setminus\{0\}$ and $k=-\alpha$, so that $\nu_k=0$
and $\mu_k=|\alpha|>0$. Then $(W_{\pm,k})^*=e^{\pm
i\delta_k}\HH_{|\alpha|}\HH_0$ is unbounded on $L^1(r\,dr)$ by
Step~1, so $W_\pm^*$ is unbounded on $L^1(\R^2)$ and $W_\pm$ is
unbounded on $L^\infty(\R^2)$ by Step~0.

\emph{Step 4: $L^\infty$, non even flux.} Let $\alpha\notin2\Z$
and $k=0$, so that $(W_{\pm,0})^*=e^{\pm
i\delta_0}\HH_0\HH_{|\alpha|}=:T$. Assume, by contradiction,
that $T$ is bounded on $L^1(\R_+,r\,dr)$, and let $S=\mathcal
U_1T\mathcal U_1^{-1}$, which is then bounded on $L^1(\R)$ (with
the same constant, on the dense subspace $\mathcal U_1(L^1\cap
L^2)\supset C_c^\infty(\R)$, hence everywhere). Since $2\in
S_{0,|\alpha|}=\{0<\Re z<|\alpha|+2\}$,
Proposition~\ref{prop:symbol} with $p=1$ shows that
$\widehat{Sg}=\sigma\widehat g$ for $g\in C_c^\infty(\R)$, where
$\sigma(\tau)=e^{\pm i\delta_0}m_{0,|\alpha|}(2-i\tau)$ is
continuous. By Lemma~\ref{lem:sing}(i) with
$(\nu,\mu)=(0,|\alpha|)$, $\sigma(\tau)\to e^{\pm
i\delta_0}e^{\mp i\phi}$ as $\tau\to\pm\infty$ with
$\phi=-\pi|\alpha|/2$, and these two limits are different
because $\sin\phi\ne0$. This contradicts
Lemma~\ref{lem:L1-mult}. Hence $T$ is unbounded on $L^1(r\,dr)$,
and by Step~0, $W_\pm$ is unbounded on $L^\infty(\R^2)$.

Steps 3 and 4 cover every $\alpha\ne0$ (both apply when $\alpha$
is an odd integer). Finally, the corresponding failures for
$W_\pm^*$ follow from endpoint duality: boundedness of $W_\pm^*$
on $L^\infty$ would imply boundedness of $W_\pm$ on $L^1$, while
boundedness of $W_\pm^*$ on $L^1$ would imply boundedness of
$W_\pm$ on $L^\infty$.
\end{proof}

\begin{proof}[Proof of Theorem~\ref{thm:weighted}]
If $\alpha=0$, Proposition~\ref{prop:partial-wave} gives
$W_\pm=W_\pm^*=I$, hence (i).

Let $\alpha\ne0$ and put $c=(2+\beta)/p$. The inequalities in
\eqref{eq:weighted-W-range} are equivalent to
$-\rho_\alpha<c<2$, so their sufficiency follows from
Proposition~\ref{prop:weighted-suff}. We prove necessity using
two fixed angular channels.

If $c\ge2$, then $\beta\ge2p-2$. In the radial channel $k=0$ one
has $\mu_0=0$ and $\nu_0=|\alpha|>0$. Choose $h\in
C_c^\infty(0,\infty)$, $h\ge0$, $h\not\equiv0$. By
\eqref{eq:tail},
\begin{equation*}
  (\HH_{|\alpha|}\HH_0h)(r)
  =|\alpha|r^{-2}\int_0^\infty h(s)s\dd s+O(r^{-3})
\end{equation*}
as $r\to\infty$. Its $p$th power is not integrable against
$r^{\beta+1}\,dr$ when $\beta\ge2p-2$. Since $h$ is supported in
a compact annulus, the radial function
$f(r,\theta)=(2\pi)^{-1/2}h(r)$ belongs to $L^p_\beta\cap L^2$.
Proposition~\ref{prop:partial-wave} shows that $W_\pm f$ is, up
to a unimodular constant, the function in the last display.
Therefore $W_\pm$ cannot be bounded on $L^p_\beta$.

If $c\le-\rho_\alpha$, choose $k_\alpha\in\Z$ such that
$\nu_{k_\alpha}=|k_\alpha+\alpha|=\rho_\alpha$. If
$\rho_\alpha>0$, then $\nu_{k_\alpha}$ is nonintegral while
$\mu_{k_\alpha}=|k_\alpha|$ is an integer; if $\rho_\alpha=0$,
then $\alpha\in\Z\setminus\{0\}$, $k_\alpha=-\alpha$, and
$\mu_{k_\alpha}=|\alpha|>0$. In either case
\begin{equation*}
  \frac{\mu_{k_\alpha}-\nu_{k_\alpha}}2
  \notin\{0,-1,-2,\ldots\}.
\end{equation*}
For the same choice of $h$, the coefficient in
\eqref{eq:origin-asymptotic} is nonzero, since
\begin{equation*}
  b_{\rho_\alpha,\mu_{k_\alpha}}\neq0,\qquad
  \int_0^\infty h(s)s^{-\rho_\alpha-1}\dd s>0.
\end{equation*}
Hence there are $r_0,C>0$ such that
\begin{equation*}
  |\HH_{\rho_\alpha}\HH_{\mu_{k_\alpha}}h(r)|
  \ge C r^{\rho_\alpha},\qquad 0<r<r_0.
\end{equation*}
But
\begin{equation*}
  \int_0^1 r^{p\rho_\alpha}r^{\beta+1}\dd r=\infty
  \qquad\Longleftrightarrow\qquad
  \beta+2+p\rho_\alpha\le0
  \qquad\Longleftrightarrow\qquad
  c\le-\rho_\alpha.
\end{equation*}
Taking $f(r,\theta)=(2\pi)^{-1/2}e^{ik_\alpha\theta}h(r)$ and
using Proposition~\ref{prop:partial-wave}, we conclude that
$W_\pm f\notin L^p_\beta$. Thus $W_\pm$ is again unbounded. This
proves the equivalence \eqref{eq:weighted-W-range}, including
both boundary lines.

It remains to identify the adjoint range. The Banach dual of
$L^p_\beta$ under the unweighted pairing is $L^{p'}_{\beta'}$,
where
\begin{equation*}
  p'=\frac{p}{p-1},
  \qquad
  \beta'=-\frac{\beta}{p-1}.
\end{equation*}
Since the $L^2$ adjoint of $W_\pm^*$ is $W_\pm$, weighted
duality shows that $W_\pm^*$ is bounded on $L^p_\beta$ if and
only if $W_\pm$ is bounded on $L^{p'}_{\beta'}$. To spell this
out, for test functions supported away from the origin,
\begin{equation*}
  |\langle W_\pm f,g\rangle|
  =|\langle f,W_\pm^*g\rangle|
  \le \|f\|_{L^{p'}_{\beta'}}\|W_\pm^*g\|_{L^p_\beta}.
\end{equation*}
Taking the supremum over $g$ proves one implication;
interchanging $W_\pm$ and $W_\pm^*$ proves the converse, and
density identifies the extensions with the original $L^2$
operators. Applying \eqref{eq:weighted-W-range} with
$(p',\beta')$ gives
\begin{equation*}
  -2-p'\rho_\alpha<\beta'<2(p'-1)
  \qquad\Longleftrightarrow\qquad
  -2<\beta<2(p-1)+p\rho_\alpha,
\end{equation*}
which is \eqref{eq:weighted-Wstar-range}. Intersecting the two
ranges gives \eqref{eq:weighted-similarity-range}.
The strong endpoint failures are
Proposition~\ref{prop:endpoints}.
\end{proof}

\begin{remark}[Integer flux]\label{rem:gauge}
For $n\in\Z$ the Hamiltonians $H_{n+\beta}$ and $H_\beta$ are
gauge equivalent, $H_{n+\beta}=U_n^{-1}H_\beta U_n$ with
$U_nf=e^{in\theta}f$. Since $U_n$ does not commute with $H_0$,
this does not reduce $W_\pm$ for flux $n+\beta$ to $W_\pm$ for
flux $\beta$, and indeed $W_\pm\ne I$ for
$\alpha\in\Z\setminus\{0\}$. 
Thus the obstruction to endpoint boundedness is present for
all $\alpha\neq0$; the $L^1$ failure always comes from
the $|x|^{-2}$ tail in the mode $k=0$; the $L^\infty$ failure
comes from the mode $k=-\alpha$ when
$\alpha\in\Z\setminus\{0\}$, and from the jump of the symbol at
$\tau=\pm\infty$ in the mode $k=0$ when $\alpha\notin2\Z$; for
odd integer flux both mechanisms are present.
\end{remark}

\begin{remark}
We do not know whether $W_\pm$ is of weak type $(1,1)$, or
bounded from $H^1(\R^2)$ to $L^1(\R^2)$ or from $L^\infty(\R^2)$
to $\operatorname{BMO}(\R^2)$. The $|x|^{-2}$ tail responsible
for the $L^1$ failure belongs to weak $L^1(\R^2)$, and the
Hilbert transform hidden in the symbol is of weak type $(1,1)$,
so none of the obstructions found here excludes these endpoint
substitutes. The log polar change of variables used above does
not preserve weak $L^1$ norms, so the question is not settled by
the multiplier argument of Section~\ref{sec:interior}.
\end{remark}

\section{Sharp bounds for the Krein realization}
\label{sec:Krein}

We now prove Theorem~\ref{thm:krein}. The argument is short
because the Krein and Friedrichs realizations differ only in the
two critical angular channels. What is different now is the
common Mellin strip. Indeed, \eqref{eq:Krein-orders} gives
\begin{equation}\label{eq:Krein-common-strip}
  \bigcap_{k\in\Z}S_{\lambda_k,\mu_k}
  =\bigl(\eta_\alpha,2\bigr),
  \qquad \eta_\alpha=\max\{\alpha,1-\alpha\}.
\end{equation}
The left boundary comes from the more singular of the two
critical modes; the right boundary comes from the radial mode
$k=0$.

\begin{proposition}\label{prop:Krein-weighted}
Let $0<\alpha<1$, $1<p<\infty$, and $\beta\in\R$. Then
$W_\pm^{\rm K}$ is bounded on $L^p_\beta$ if and only if
\begin{equation}\label{eq:Krein-c-range}
  \eta_\alpha<\frac{2+\beta}{p}<2.
\end{equation}
\end{proposition}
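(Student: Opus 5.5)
Sufficiency follows the Friedrichs argument verbatim, with the strip $(-\rho_\alpha,2)$ replaced by the common Krein strip \eqref{eq:Krein-common-strip}. Assume $\eta_\alpha<c<2$, where $c=(2+\beta)/p$. Then $c\in S_{\lambda_k,\mu_k}$ for every $k$. Proposition~\ref{prop:symbol}, which allows $\nu=\lambda_k>-1$ including the negative orders, applies in every channel. Together with Proposition~\ref{prop:Krein-partial-wave}, it gives the conjugation \eqref{eq:conjugation} on $\mathcal D$ with symbol
\[
M^{\rm K}_{c,\pm}(\tau,k)=e^{\mp i\delta_k^{\rm K}}\,m_{\lambda_k,\mu_k}(c-i\tau).
\]
For $|k|\ge K$ with $K\ge 2$ we have $\lambda_k=\nu_k$ and $\delta_k^{\rm K}=\delta_k$, so $M^{\rm K}_{c,\pm}$ coincides with $M_{c,\pm}$ on both tails. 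Note that Lemma~\ref{lem:mixed} and the extensions $Q_{\sigma,\pm}$ only required $K\ge|\alpha|+|c|+3$, not $c>-\rho_\alpha$. Hence the decomposition \eqref{eq:decomp} applies unchanged. Its tail terms are handled by Marcinkiewicz, de~Leeuw and M.~Riesz exactly as in Proposition~\ref{prop:weighted-suff}. The finitely many modes $|j|<K$, including $k=0,-1$, are one-dimensional Mikhlin multipliers by \eqref{eq:fixed-mode-mikhlin}. That bound follows from Proposition~\ref{prop:symbol} and vertical Stirling, since $c$ lies strictly inside $S_{\lambda_j,\mu_j}$. The density argument then yields boundedness of $W^{\rm K}_\pm$ on $L^p_\beta$.

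Necessity splits into the two boundary sides. For $c\ge2$, i.e.\ $\beta\ge 2p-2$, I use the radial channel $k=0$, where $\mu_0=0$ and $\lambda_0=-\alpha\ne0$. Lemma~\ref{lem:sing}(ii) is stated for $\nu>-1$, $\nu\ne0$, so it covers $\nu=-\alpha$. Its proof needs the pole at $z=2$ to lie in $(-\nu,4)$, which holds because $-\nu=\alpha<2$. It gives
\[
\HH_{-\alpha}\HH_0h(r)=-\alpha\,r^{-2}\int_0^\infty h(s)\,s\dd s+O(r^{-3}),
\]
with nonzero leading coefficient. This is not in $L^p(r^{\beta+1}dr)$ near infinity, exactly as in the proof of Theorem~\ref{thm:weighted}.

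For $c\le\eta_\alpha$, I pick the critical channel carrying $\eta_\alpha$. If $\eta_\alpha=\alpha$, take $k=0$, with $\lambda=-\alpha$ and $\mu=0$. Otherwise take $k=-1$, with $\lambda=\alpha-1=-\eta_\alpha$ and $\mu=1$. In both cases $\lambda=-\eta_\alpha$, and I check the hypothesis of Lemma~\ref{lem:sing}(iii): $(\mu-\lambda)/2$ equals $\alpha/2$ or $(2-\alpha)/2$, both positive, so it is not a nonpositive integer. Then $\HH_\lambda\HH_\mu h(r)\sim b_{\lambda,\mu}\,r^{-\eta_\alpha}\int h\,s^{\eta_\alpha-1}\dd s$ as $r\to0$. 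For $h\ge0$ nonzero the coefficient is nonzero; $b\ne0$ because $\Gamma(\lambda+1)$ and $\Gamma((\mu+\lambda+2)/2)$ are finite, since $\lambda+1>0$ and $\mu+\lambda+2>0$. Now $\int_0^1 r^{-p\eta_\alpha}r^{\beta+1}dr=\infty$ if and only if $\beta+2\le p\eta_\alpha$, i.e.\ $c\le\eta_\alpha$. Taking $f=e^{ik\theta}h(r)$ gives unboundedness.

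The main point to verify carefully is that Lemma~\ref{lem:sing}(iii) and its contour shift are valid for negative $\nu$. I must make sure the only pole crossed when moving to $\Re z=-\nu-1=\eta_\alpha-1$ is the one at $z=-\nu=\eta_\alpha$. The reciprocal factor $1/\Gamma((\mu+z)/2)$ has zeros at $z=-\mu-2j$ but never poles, so this holds. The decay of the remaining integrand should also be rechecked. A second point is the endpoint $c=2$ in the $k=0$ channel: there $2\notin S_{-\alpha,0}$, which is why the tail argument, rather than the multiplier argument, is the right tool. If the radial tail coefficient were to vanish, a backup would be Lemma~\ref{lem:L1-mult}-type reasoning, but $-\alpha\ne0$ rules this out.
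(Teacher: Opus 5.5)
Your proposal is correct and follows essentially the same route as the paper: sufficiency via the log-polar multiplier, and necessity via Lemma~\ref{lem:sing}(ii) in the radial channel $k=0$ for $c\ge2$ and Lemma~\ref{lem:sing}(iii) in the channel with $\lambda_{k_*}=-\eta_\alpha$ for $c\le\eta_\alpha$. The only difference is bookkeeping in the sufficiency step. The paper takes the Friedrichs symbol, which is a multiplier by Proposition~\ref{prop:weighted-suff} since $c>\eta_\alpha>0$, and corrects it in the two channels $k\in\{0,-1\}$, whereas you rerun the decomposition \eqref{eq:decomp} directly with the Krein symbols in the finitely many low modes. The two arguments are equivalent.
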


\begin{proof}
Put $c=(2+\beta)/p$. Suppose first that $\eta_\alpha<c<2$. By
Proposition~\ref{prop:Krein-partial-wave},
Proposition~\ref{prop:symbol}, and the same logarithmic polar
conjugation as in \eqref{eq:conjugation}, the symbol of
$W_\pm^{\rm K}$ is
\begin{equation}\label{eq:Krein-multiplier}
  M_{c,\pm}^{\rm K}(\tau,k)
  =e^{\mp i\delta_k^{\rm K}}m_{\lambda_k,\mu_k}(c-i\tau).
\end{equation}
For $k\notin\{0,-1\}$ this is exactly the Friedrichs symbol
$M_{c,\pm}(\tau,k)$. Since $c>\eta_\alpha>0$, the Friedrichs
symbol is an $L^p(\R\times\T)$ multiplier by
Proposition~\ref{prop:weighted-suff}. Hence
\begin{equation*}
  M_{c,\pm}^{\rm K}=M_{c,\pm}
  +\sum_{j\in\{0,-1\}}\ind_{\{k=j\}}
  \bigl(M_{c,\pm}^{\rm K}(\tau,j)-M_{c,\pm}(\tau,j)\bigr)
\end{equation*}
is also an $L^p(\R\times\T)$ multiplier. To justify the last
assertion, each of the four fixed channel symbols is bounded and
satisfies the one dimensional Mikhlin estimate
\eqref{eq:fixed-mode-mikhlin}: this follows from
Proposition~\ref{prop:symbol} and the vertical Gamma and digamma
estimates, exactly as in the argument following
\eqref{eq:fixed-mode-mikhlin}. The factors $\ind_{\{k=j\}}$ are
the bounded angular projections $P_j$. The density argument
following \eqref{eq:conjugation} now proves the $L^p_\beta$
bound for $W_\pm^{\rm K}$.

We prove necessity at both boundaries. If $c\le\eta_\alpha$,
choose $k_*\in\{0,-1\}$ so that $\lambda_{k_*}=-\eta_\alpha$.
Let $h\in C_c^\infty(0,\infty)$ be nonnegative and nonzero.
Since $(\mu_{k_*}-\lambda_{k_*})/2>0$, Lemma~\ref{lem:sing}(iii)
gives
\begin{equation}\label{eq:Krein-origin-asymptotic}
  (\HH_{-\eta_\alpha}\HH_{\mu_{k_*}}h)(r)
  =b_{-\eta_\alpha,\mu_{k_*}}r^{-\eta_\alpha}
  \int_0^\infty h(s)s^{\eta_\alpha-1}\dd s
  +O(r^{1-\eta_\alpha}),
  \qquad r\to0,
\end{equation}
with a nonzero leading coefficient. Its weighted radial norm
near zero diverges precisely when
\begin{equation*}
  \beta+2-p\eta_\alpha\le0,
\end{equation*}
which is equivalent to $c\le\eta_\alpha$. The input
$(2\pi)^{-1/2}e^{ik_*\theta}h(r)$ is in $L^2\cap L^p_\beta$,
whereas its image under $W_\pm^{\rm K}$ is not in $L^p_\beta$.

If $c\ge2$, use instead the radial channel $k=0$, for which
$(\lambda_0,\mu_0)=(-\alpha,0)$. Lemma~\ref{lem:sing}(ii), now
with the negative order $\nu=-\alpha$, yields
\begin{equation}\label{eq:Krein-tail}
  (\HH_{-\alpha}\HH_0h)(r)
  =-\alpha r^{-2}\int_0^\infty h(s)s\dd s+O(r^{-3}),
  \qquad r\to\infty.
\end{equation}
This is not in $L^p((1,\infty),r^{\beta+1}\,dr)$ when
$\beta\ge2p-2$, equivalently $c\ge2$. Thus neither boundary can
be included, and \eqref{eq:Krein-c-range} is necessary.
\end{proof}

\begin{proof}[Proof of Theorem~\ref{thm:krein}]
Existence, unitarity, and the channel formula are contained in
Proposition~\ref{prop:Krein-partial-wave}. Since
\eqref{eq:Krein-c-range} is equivalent to
$p\eta_\alpha-2<\beta<2(p-1)$,
Proposition~\ref{prop:Krein-weighted} proves
\eqref{eq:Krein-W-range}. Weighted duality, in exactly the form
used in the proof of Theorem~\ref{thm:weighted}, says that
$(W_\pm^{\rm K})^*$ is bounded on $L^p_\beta$ if and only if
$W_\pm^{\rm K}$ is bounded on $L^{p'}_{-\beta/(p-1)}$. Applying
\eqref{eq:Krein-W-range} with these dual exponents gives
\begin{equation*}
  -2<\beta<2(p-1)-p\eta_\alpha,
\end{equation*}
which is \eqref{eq:Krein-Wstar-range}.

For $\beta=0$, the wave operator condition is $p<2/\eta_\alpha$
and the adjoint condition is $p>2/(2-\eta_\alpha)$, proving
\eqref{eq:Krein-unweighted}. In their intersection, the $L^2$
identities $(W_\pm^{\rm K})^*W_\pm^{\rm K}=W_\pm^{\rm
K}(W_\pm^{\rm K})^*=I$ extend by density to $L^p$. Intersecting
the two weighted ranges gives \eqref{eq:Krein-similarity-range},
and the same density argument extends the inverse identities to
  these weighted spaces. The intertwining identity then proves
  the stated similarity on these weighted spaces.
\end{proof}

\section{Spectral similarity and transference}
\label{sec:applications}

We collect consequences of the wave operator bounds.
Let $H$ be either $\mathcal L_{\mathbf A,a}$ under the
assumptions of Theorem~\ref{thm:general}, an AB Friedrichs
Hamiltonian $H_\alpha$ with arbitrary real flux, or an AB
Krein Hamiltonian $H_\alpha^{\rm K}$ with $0<\alpha<1$.
Set $W=W_+(H,-\Delta)$ or $W=W_-(H,-\Delta)$, with one sign
fixed. The interval of simultaneous unweighted boundedness
used below is
\begin{equation*}
  I_H=
  \begin{cases}
    (1,\infty),&\text{in the Friedrichs cases},\\
    (2/(2-\eta_\alpha),\,2/\eta_\alpha),
      &H=H_\alpha^{\rm K}.
  \end{cases}
\end{equation*}

\subsection{Similarity and spectral estimates}

\begin{corollary}[$L^p$ similarity and transference]
\label{cor:transfer}
  Let $H,W$ be as above and let $p,q\in I_H$. If $m$ is a
  bounded Borel function on $[0,\infty)$ and $m(-\Delta)$
  extends boundedly from $L^p$ to $L^q$, then so does $m(H)$,
  and
  \begin{equation}\label{eq:general-transfer}
    \|m(H)\|_{L^p\to L^q}
    \le \|W\|_{L^q\to L^q}\,
    \|m(-\Delta)\|_{L^p\to L^q}\,
    \|W^*\|_{L^p\to L^p}.
  \end{equation}
  For $p=q$, this gives the similarity identity
  \begin{equation*}
    m(H)=Wm(-\Delta)W^{-1}
    \qquad\text{on }L^p.
  \end{equation*}
  For $H_\alpha$ the same identity holds on $L^p_\beta$ when
  $-2<\beta<2(p-1)$, and for $H_\alpha^{\rm K}$ when
  $p\eta_\alpha-2<\beta<2(p-1)-p\eta_\alpha$, provided the
  free multiplier is bounded on that space. The estimates
  are uniform for any uniformly bounded family of free
  multipliers.
\end{corollary}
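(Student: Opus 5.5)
The starting point is the intertwining identity on $L^2$. Since $W$ is unitary and intertwines $H$ with $-\Delta$ (Propositions~\ref{prop:partial-wave}, \ref{prop:Krein-partial-wave}, \ref{prop:em-L2}), for every bounded Borel $m$ we have $m(H)W=Wm(-\Delta)$. The spectrum of $H$ is purely absolutely continuous, so $W$ has full range and
\begin{equation*}
  m(H)=W\,m(-\Delta)\,W^*\qquad\text{on }L^2(\R^2).
\end{equation*}
We obtain this either from the strong limit definition, since $e^{isH}W=We^{-is\Delta}$ extends to bounded Borel functions by the functional calculus, or directly from the factorizations \eqref{eq:partial-wave}, \eqref{eq:Krein-partial-wave}, \eqref{eq:em-L2}. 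In the latter route $m(H)$ acts channelwise as $\HH_\gamma m(\xi^2)\HH_\gamma$, and the Hankel involutions cancel.

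Next we get the estimate on a dense class. For $f\in L^2\cap L^p$,
\begin{equation*}
  \|m(H)f\|_q=\|Wm(-\Delta)W^*f\|_q
  \le\|W\|_{q\to q}\|m(-\Delta)W^*f\|_q
  \le\|W\|_{q\to q}\|m(-\Delta)\|_{p\to q}\|W^*\|_{p\to p}\|f\|_p .
\end{equation*}
Each step requires the intermediate function to lie in $L^2$ as well as in the relevant $L^r$, so that the $L^2$ operator coincides with its $L^r$ extension there. First, $W^*f\in L^2\cap L^p$ by Theorem~\ref{thm:general}, Corollary~\ref{cor:AB-unweighted}, or Theorem~\ref{thm:krein}, because $p\in I_H$. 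Second, $m(-\Delta)W^*f\in L^2$ since $m$ is bounded, and it lies in $L^q$ by hypothesis. Here one must check that the $L^2$ action of $m(-\Delta)$ agrees with its $L^p\to L^q$ extension on $L^2\cap L^p$. This follows by approximating $W^*f$ in both norms by Schwartz functions and passing to an a.e.\ convergent subsequence, exactly as in the density argument after \eqref{eq:conjugation}. Third, $W$ is applied on $L^2\cap L^q$ with $q\in I_H$. Density of $L^2\cap L^p$ in $L^p$ then gives \eqref{eq:general-transfer}.

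For $p=q$, the mutual inverse property of $W$ and $W^*$ on $L^p$ is part of the earlier theorems. Together with the $L^2$ identity on the dense set, it gives $m(H)=Wm(-\Delta)W^{-1}$ on $L^p$.

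The weighted statements follow by the same argument with $L^p$ replaced by $L^p_\beta$. The admissible ranges for $\beta$ are exactly the simultaneous boundedness ranges \eqref{eq:weighted-similarity-range} and \eqref{eq:Krein-similarity-range}, where $W$ and $W^*$ are inverse isomorphisms. Density of $L^2\cap L^p_\beta$ is provided by the space $\mathcal D$. Uniformity over bounded families of multipliers is immediate, because the constant in \eqref{eq:general-transfer} is linear in $\|m(-\Delta)\|$ and the wave operator norms do not depend on $m$.

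The only delicate step is the consistency of extensions, i.e.\ identifying the $L^2$ operators with their $L^p$ and $L^q$ extensions on intersections. The double-norm approximation plus a.e.\ subsequence argument already used in Section~\ref{sec:interior} handles it.
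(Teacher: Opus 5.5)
Your proposal is correct and follows the paper's proof. Both use the $L^2$ identity $m(H)=Wm(-\Delta)W^*$, the three-factor estimate on $L^2\cap L^p$, density, and the density extension of $W^*W=WW^*=I$ for the similarity, with the weighted case handled the same way. You additionally spell out the compatibility of the $L^2$ operators with their $L^p$ extensions, which the paper leaves implicit.

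One small correction: the full range of $W$ comes from the unitarity proved in Propositions~\ref{prop:partial-wave}, \ref{prop:Krein-partial-wave} and \ref{prop:em-L2}. It does not follow from absolute continuity of the spectrum of $H$ alone, so that sentence should simply cite unitarity.
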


\begin{proof}
  Unitarity and intertwining give
  $m(H)=Wm(-\Delta)W^*$ on $L^2$. For $p,q\in I_H$, the
  two wave operator factors are bounded on the required
  spaces by Theorem~\ref{thm:general},
  Corollary~\ref{cor:AB-unweighted}, or
  Theorem~\ref{thm:krein}. Applying the identity on
  $L^2\cap L^p$ proves \eqref{eq:general-transfer}, and
  density gives the extension. The identities
  $W^*W=WW^*=I$ also extend by density, proving similarity.
  The weighted assertions follow in the same way from
  Theorems~\ref{thm:weighted} and \ref{thm:krein}.
\end{proof}

The same argument acts pointwise in time. A free estimate
\begin{equation}\label{eq:spacetime-transfer}
  \|m_t(-\Delta)g\|_{L_t^rL_x^q}\le C\|g\|_{L^p},
  \qquad p,q\in I_H,
\end{equation}
transfers to $m_t(H)$ with the two wave operator norms in
\eqref{eq:general-transfer}. For Sobolev data, the spectral
intertwining also gives, on the natural domains,
\begin{equation*}
  \|(-\Delta)^{s/2}W^*h\|_2=\|H^{s/2}h\|_2,\qquad
  \|(1-\Delta)^{s/2}W^*h\|_2=\|(1+H)^{s/2}h\|_2.
\end{equation*}
This transfers the corresponding free Schr\"odinger, wave,
and Klein--Gordon Strichartz estimates whenever the required
spatial exponents lie in $I_H$. For the AB Friedrichs model,
this provides alternative proofs of estimates obtained
directly in \cite{FFFP2013,FZZ2022,GYZZ2022},
although it does not recover their endpoint $L^1\to L^\infty$ 
dispersive estimates. We give three concise examples.

\begin{enumerate}
  \item[(a)] \emph{Uniform resolvents.} A free bound
    \begin{equation*}
      \|(-\Delta-z)^{-1}\|_{L^p\to L^q}\le C_{p,q}(z),
      \qquad z\in\C\setminus[0,\infty),
    \end{equation*}
    gives the corresponding bound for $(H-z)^{-1}$, with the
    wave operator factors in \eqref{eq:general-transfer},
    whenever $p,q\in I_H$. Uniformity in $z$ is preserved.
    In the AB Friedrichs case this recovers the interior
    part of \cite{FZZ2023}; electromagnetic resolvent
    transference also appears in \cite{FSWZZ2026}.
    Cases with $p=1$ or $q=\infty$ do not follow.

  \item[(b)] \emph{Fractional integration.} Let $0<s<2$,
    $1<p<2/s$, $1/q=1/p-s/2$, and $p,q\in I_H$. Then
    \begin{equation*}
      \|H^{-s/2}f\|_{L^q}\le C_{p,s,H}\|f\|_{L^p}.
    \end{equation*}
    To justify the unbounded multiplier, use
    \begin{equation*}
      m_{\varepsilon,R}(\lambda)
      =\frac1{\Gamma(s/2)}\int_\varepsilon^R
      t^{s/2-1}e^{-t\lambda}\dd t.
    \end{equation*}
    Its free kernel is dominated uniformly by the Riesz
    kernel $C_s|x|^{s-2}$. The
    Hardy--Littlewood--Sobolev inequality and
    \eqref{eq:general-transfer} therefore give uniform
    $L^p\to L^q$ bounds. Equivalently, the extension is
    $W(-\Delta)^{-s/2}W^*$; bounded truncations and the
    spectral theorem identify it with $H^{-s/2}$ on its
    natural $L^2$ domain. For AB Friedrichs Hamiltonians,
    the same estimate follows from the diamagnetic
    inequality and the classical fractional integration
    theorem \cite{SteinWeiss1971}.

  \item[(c)] \emph{Square functions.} Let
    $\psi\in C_c^\infty((0,\infty))$ satisfy
    $\sum_{j\in\Z}|\psi(2^{-j}\lambda)|^2=1$ for $\lambda>0$.
    For every $p\in I_H$,
    \begin{equation*}
      \left\|\left(\sum_{j\in\Z}
      |\psi(2^{-j}H)f|^2\right)^{1/2}\right\|_{L^p}
      \simeq_{p,H}\|f\|_{L^p}.
    \end{equation*}
    The Marcinkiewicz--Zygmund vector valued extension of
    the bounds for $W,W^*$ transfers the classical
    Littlewood--Paley theorem. In the AB Friedrichs case,
    this also follows from the diamagnetic heat kernel
    bound and standard spectral multiplier theory.
\end{enumerate}

Other free spectral multiplier and Bochner--Riesz estimates
transfer in the same way for exponents in $I_H$. No strong
endpoint estimate is automatic, in agreement with
Theorem~\ref{thm:weighted}.

\subsection{Intertwining distinct magnetic fluxes}

The AB weighted bounds also give spectral intertwiners
between different fluxes, with no electric potential.

\begin{corollary}[Changing the flux]\label{cor:change-flux}
Let $0<a,b<1/2$ and, with the Hankel transforms of
\eqref{eq:hankel}, set
\begin{equation}\label{eq:change-flux}
  T_{a,b}=\bigoplus_{k\in\Z}\HH_{|k+a|}\HH_{|k+b|}
  \qquad\text{on }L^2(\R^2).
\end{equation}
Then $T_{a,b}$ and $T_{b,a}=T_{a,b}^{-1}$ extend boundedly to
$L^p_\beta$ for $1<p<\infty$ and $-2<\beta<2(p-1)$, in
particular to every unweighted $L^p$ with $1<p<\infty$. For
every bounded Borel function $m$ on $[0,\infty)$,
\begin{equation*}
  m(H_a)T_{a,b}=T_{a,b}m(H_b)\qquad\text{on }L^2.
\end{equation*}
This identity extends to $L^p_\beta$ whenever $m(H_b)$ is
bounded there.
\end{corollary}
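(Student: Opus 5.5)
The plan is to factor $T_{a,b}$ through the free Laplacian using the Friedrichs Aharonov--Bohm wave operators, and then read off every claim from Theorem~\ref{thm:weighted}. By Proposition~\ref{prop:partial-wave}, in the $k$th channel $W_\pm(H_a,H_0)$ acts as $e^{\mp i\delta_k^{(a)}}\HH_{|k+a|}\HH_{|k|}$, and $W_\pm(H_b,H_0)^*$ acts as $e^{\pm i\delta_k^{(b)}}\HH_{|k|}\HH_{|k+b|}$. Since $\HH_{|k|}\HH_{|k|}=I$, the channel operator of the composition $W_+(H_a,H_0)W_+(H_b,H_0)^*$ is
\begin{equation*}
  e^{-i(\delta_k^{(a)}-\delta_k^{(b)})}\,\HH_{|k+a|}\HH_{|k+b|},
  \qquad
  \delta_k^{(a)}-\delta_k^{(b)}=\tfrac\pi2\bigl(|k+b|-|k+a|\bigr).
\end{equation*}
This gives $T_{a,b}=\Theta\,W_+(H_a,H_0)W_+(H_b,H_0)^*$ on $L^2$, where $\Theta$ is the diagonal angular multiplier with symbol $e^{i\frac\pi2(|k+b|-|k+a|)}$.

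Next I would show that $\Theta$ is bounded on $L^p_\beta$ for every $1<p<\infty$ and every $\beta$. Because $0<a,b<1/2$, the phase $|k+b|-|k+a|$ equals $b-a$ for $k\ge0$ and $a-b$ for $k\le-1$. So $\Theta$ is a linear combination of the angular Riesz projections onto $\{k\ge0\}$ and $\{k\le-1\}$. Since these act only in $\theta$ at each fixed radius, their boundedness on $L^p(\T)$ (M.~Riesz) integrates in $r$ against any radial weight. Hence $\Theta$ and $\Theta^{-1}$ are bounded on all $L^p_\beta$.

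For the wave operators, Theorem~\ref{thm:weighted} applies with $\alpha=a$ and $\alpha=b$, both nonzero. It gives boundedness of $W_+(H_a,H_0)$ and of $W_+(H_b,H_0)^*$ on $L^p_\beta$ precisely in the common range $-2<\beta<2(p-1)$, since that range is contained in both one-sided ranges. Composing, $T_{a,b}$ is bounded on these spaces. Exchanging $a$ and $b$ treats $T_{b,a}$ in the same way. Each channel of $T_{b,a}T_{a,b}$ is $\HH_{|k+b|}\HH_{|k+a|}\HH_{|k+a|}\HH_{|k+b|}=I$, so $T_{b,a}=T_{a,b}^{-1}$ on $L^2$. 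This identity extends to $L^p_\beta$ by density of $\mathcal D$ together with the same a.e.\ subsequence argument used after \eqref{eq:conjugation}. The unweighted statement is the case $\beta=0$.

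For the intertwining identity I would argue channelwise. Within a channel, $m(H_a)$ and $m(H_b)$ act as $\HH_{|k+a|}m(\xi^2)\HH_{|k+a|}$ and $\HH_{|k+b|}m(\xi^2)\HH_{|k+b|}$, so both $m(H_a)T_{a,b}$ and $T_{a,b}m(H_b)$ reduce to $\HH_{|k+a|}m(\xi^2)\HH_{|k+b|}$, again by the involution property. The extension to $L^p_\beta$ when $m(H_b)$ is bounded there follows from the $L^p_\beta$ bounds on $T_{a,b}$ and $T_{a,b}^{-1}$. Indeed, $m(H_a)=T_{a,b}m(H_b)T_{a,b}^{-1}$ on $L^2\cap L^p_\beta$, and density finishes.

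The only point needing care is the phase bookkeeping, namely checking that the leftover multiplier is constant on the two half-lines of modes. That is exactly where the hypothesis $0<a,b<1/2$ enters: it makes $k=0$ and $k=-1$ fall on the correct sides, because $k+a$ and $k+b$ have the same sign for every $k$. Everything else is assembled directly from earlier results.
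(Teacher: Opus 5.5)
Your proposal is correct and follows essentially the same route as the paper. The paper writes $W_+(H_a,H_0)=D_aV_a$ with $D_a=e^{i\pi a/2}P_++e^{-i\pi a/2}P_-$ and $V_a=\bigoplus_k\HH_{|k+a|}\HH_{|k|}$, so that $T_{a,b}=V_aV_b^*=D_a^{-1}D_b\,W_+(H_a,H_0)W_+(H_b,H_0)^*$; your $\Theta$ is exactly $D_a^{-1}D_b$, handled by the same angular Riesz projection argument before applying Theorem~\ref{thm:weighted} and the channelwise spectral identity. One cosmetic quibble: the phase bookkeeping only needs $0<a,b<1$ (so that $k+a$ and $k+b$ have the sign of $k+\tfrac12$), so the step is not really ``exactly where the hypothesis'' $0<a,b<1/2$ enters.
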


\begin{proof}
Write $W_{+,a}=W_+(H_a,H_0)$ and let $P_+$, $P_-$ denote the
angular projections onto $k\ge0$, $k\le-1$. The partial wave
formula \eqref{eq:partial-wave} gives
\begin{equation*}
  W_{+,a}=D_aV_a,\qquad
  D_a=e^{i\pi a/2}P_++e^{-i\pi a/2}P_-,\qquad
  V_a=\bigoplus_{k\in\Z}\HH_{|k+a|}\HH_{|k|}.
\end{equation*}
The angular Riesz projections are bounded on $L^p_\beta$, so
$D_a$ and $D_a^{-1}$ are bounded there.
Theorem~\ref{thm:weighted} therefore gives boundedness of
$V_a=D_a^{-1}W_{+,a}$ and $V_a^*=W_{+,a}^*D_a$. Since
$\HH_{|k|}^2=I$, we have $T_{a,b}=V_aV_b^*$ and
$T_{a,b}^{-1}=T_{b,a}$ on $L^2$. The spectral representations
give the intertwining identity, and density proves all
assertions on $L^p_\beta$.
\end{proof}

For $a\ne b$ in this interval, the increasing angular
eigenvalues have the same mode ordering $0,-1,1,-2,2,\ldots$,
yet
\begin{equation*}
  |k+a|^2-|k+b|^2=2(a-b)k+a^2-b^2
\end{equation*}
is unbounded. Thus bounded spectral intertwining is compatible
with unbounded separation of the paired angular eigenvalues.
This complements the fixed magnetic potential framework of
\cite{FSWZZ2026} and the discussion at the beginning of its
Section~2. The operators $T_{a,b}$ are unphased spectral
intertwiners; the phases of the time dependent wave operators
must be accounted for separately.

\appendix
\section{A self contained transplantation argument}
\label{app:transplantation}

We prove Proposition~\ref{prop:em-transplantation} without
using the transplantation theorem of \cite{FSWZZ2026}.
The angular change of basis is already controlled by
Lemmas~\ref{lem:em-angular} and \ref{lem:em-eigenbasis},
and the remaining issue is to sum the radial Hankel products
uniformly over the angular modes. The lemma below isolates
this issue. 

\subsection{Diagonal Hankel transplantation}

\begin{lemma}\label{lem:diagonal-transplantation}
  Let $\alpha\in\R$, set $\mu_k=|k+\alpha|$, and let
  $\nu_k\ge0$, $k\in\Z$, satisfy
  \begin{equation}\label{eq:app-order-shifts}
    C_0\coloneq\sup_{k\in\Z}
    (1+|k|)|\nu_k-\mu_k|<\infty.
  \end{equation} 
  Define on $L^2(\R^2)$
  \begin{equation}\label{eq:app-diagonal}
    T\Bigl(\sum_k f_ke_k\Bigr)
    =\sum_k(\HH_{\nu_k}\HH_{\mu_k}f_k)e_k,
    \qquad e_k=(2\pi)^{-1/2}e^{ik\theta}.
  \end{equation}
  Then $T$ is unitary. For every $1<p<\infty$, both $T$
  and $T^*$ extend boundedly to $L^p(\R^2)$ and are
  mutually inverse there.
\end{lemma}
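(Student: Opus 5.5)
Unitarity is immediate: each $\HH_{\nu_k}\HH_{\mu_k}$ is unitary on $L^2(\R_+,r\,dr)$ (both orders are $\ge0>-1$), and $T$ acts diagonally on the orthogonal angular decomposition, with $T^*$ given by the products $\HH_{\mu_k}\HH_{\nu_k}$. For the $L^p$ bounds I will reuse the log-polar multiplier machinery of Section~\ref{sec:interior}. Conjugating by $\mathcal U_{p,0}$ with $c=2/p\in(0,2)$, Proposition~\ref{prop:symbol} shows that on the dense class $\mathcal D$ the operator $T$ becomes the multiplier $M(\tau,k)=m_{\nu_k,\mu_k}(c-i\tau)$ on $\R\times\T$. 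Indeed $c\in S_{\nu_k,\mu_k}$ for every $k$, since $\nu_k,\mu_k\ge0$. The density argument after \eqref{eq:conjugation} then reduces the claim to showing that $M$ is an $L^p(\R\times\T)$ multiplier. The same reasoning, with $\mu,\nu$ interchanged, handles $T^*$. Once both operators are bounded, the mutual inverse identities extend from $L^2$ by density.

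I will split $M$ by tails, as in \eqref{eq:decomp}. The finitely many modes $|k|<K$ are treated exactly as before: each is a fixed channel symbol obeying \eqref{eq:fixed-mode-mikhlin}, composed with $P_k$. For the tails $k\ge K$ (the case $k\le-K$ is symmetric) one has $\mu_k=k+\alpha$ and $\nu_k=\mu_k+\epsilon_k$ with $|\epsilon_k|\le C_0/(1+|k|)$. I factor
\begin{equation*}
  m_{\nu_k,\mu_k}(z)=m_{\mu_k,\mu_k}(z)\cdot\frac{m_{\nu_k,\mu_k}(z)}{m_{\mu_k,\mu_k}(z)}.
\end{equation*}
Here $m_{\mu,\mu}\equiv1$, so the first factor is trivial. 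In the notation of \eqref{eq:symbol} the remaining ratio is
\begin{equation*}
  R_k(\tau)=\frac{\Gamma(\frac{\mu_k+\epsilon_k+c-i\tau}{2})\,\Gamma(\frac{\mu_k+2-c+i\tau}{2})}{\Gamma(\frac{\mu_k+c-i\tau}{2})\,\Gamma(\frac{\mu_k+\epsilon_k+2-c+i\tau}{2})}
  =\frac{G_{\epsilon_k/2}(w_k)}{G_{\epsilon_k/2}(v_k)},
\end{equation*}
with $w_k,v_k$ as in \eqref{eq:aux} and $x=\mu_k$.

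The key estimate is that $R_k-1$ is small uniformly. Writing $\log R_k=\Lambda_k(w_k)-\Lambda_k(v_k)$, where $\Lambda_k'=\psi(\cdot+\epsilon_k/2)-\psi$, the mean value theorem combined with $|\psi'(\zeta)|\lesssim|\zeta|^{-1}$ gives $|\Lambda_k'(\zeta)|\lesssim|\epsilon_k|/|\zeta|$. Since $|w_k-v_k|\lesssim1+|\tau|$ and $|w_k|\simeq|v_k|\simeq1+|\tau|+|k|$, I expect
\begin{equation*}
  |R_k(\tau)-1|+(1+|\tau|)|\partial_\tau R_k(\tau)|\lesssim\frac{|\epsilon_k|(1+|\tau|)}{1+|\tau|+|k|}\lesssim\frac{1}{1+|k|}.
\end{equation*}
This bound is uniform in $\tau$. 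Moreover $\sum_k$ of the Mikhlin constants of $R_k-1$ diverges only logarithmically, so summing the modes naively is not enough. This is the main obstacle.

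I will try two ways to resolve it. The first is to proceed as in Lemma~\ref{lem:em-angular}: treat $\sum_{k\ge K}(R_k-1)P_k$ as an angular kernel in $\ell^2$ rather than $\ell^1$. The bound $\sum_k\sup_\tau|R_k-1|^2<\infty$, together with the analogous bounds for $\tau$-derivatives, should give an operator of the form $\int M_{K(\cdot,\varphi)}R_\varphi\,d\varphi$ whose radial parts are one dimensional Mikhlin multipliers with integrable dependence on $\varphi$. The alternative is to extend $(\tau,k)\mapsto R_k(\tau)$ to a smooth Marcinkiewicz symbol on $\R^2$, as in \eqref{eq:extension}. This requires choosing a smooth interpolation $\epsilon(x)$ with $|\epsilon(x)|\lesssim(1+x)^{-1}$ and $|\epsilon'(x)|\lesssim(1+x)^{-1}$. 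Such an interpolation exists, for instance by piecewise linear interpolation followed by smoothing: the jumps $|\epsilon_{k+1}-\epsilon_k|$ are $O(1/k)$. With it, Lemma~\ref{lem:mixed}-type bounds $|\partial_\tau^a\partial_x^bR|\lesssim(1+|\tau|+x)^{-a}(1+x)^{-b}$ hold, and Theorems~\ref{thm:marcinkiewicz} and \ref{thm:deleeuw} apply. I favour this second route because it fits the existing machinery. The remaining checks are that the Marcinkiewicz condition needs only $|\eta\,\partial_\eta R|\lesssim1$, which the bound $|\epsilon'|\lesssim(1+x)^{-1}$ provides, and that the product with the Riesz projection $\ind_{\{k\ge K\}}$ completes the proof.
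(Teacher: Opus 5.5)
Your favoured second route is essentially the paper's proof. The paper also reduces to the cylinder symbol $m_{\nu_k,\mu_k}(c-i\xi)$ with $c=2/p$, and interpolates the half shifts $(\nu_k-\mu_k)/2$ by a smooth $h_\epsilon(x)$ with $|h_\epsilon|+|h_\epsilon'|\lesssim(1+x)^{-1}$. It then proves exactly your mixed bounds, $|F_\xi|\lesssim 1/R$, $|F_x|\lesssim 1/x$ and $|F_{\xi x}|\lesssim 1/(xR)$, where the $x$-derivative picks up the bounded term $h'\,(\psi(w+h)-\psi(v+h))$. It concludes with Marcinkiewicz, de Leeuw, one dimensional Mikhlin on the finitely many low modes, and duality for $T^*$.

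The one point to fix is that mollifying a piecewise linear interpolant does not preserve its nodal values, whereas the extension must agree exactly with $M$ on the lattice. Use an interpolation that is exact at the nodes instead, such as the paper's disjoint bumps $\frac12\sum_{n\ge K}(\nu_{\epsilon n}-\mu_{\epsilon n})\chi(x-n)$ with $\chi(0)=1$.
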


\begin{proof}
  Each Hankel transform in \eqref{eq:app-diagonal} is a
  unitary involution on $L^2(\R_+,r\,dr)$. The direct sum
  therefore defines a unitary operator, whose inverse is
  obtained by reversing the two Hankel transforms in each
  channel. We now prove the $L^p$ estimate. Fix
  $1<p<\infty$ and put $c=2/p\in(0,2)$.

  \emph{Step 1: the Mellin reduction.}
  Use the isometry $\mathcal U_{p,0}$ in
  \eqref{eq:fulliso}, and the dense space $\mathcal D$
  of finite angular sums with compactly supported smooth
  radial coefficients from Section~\ref{sec:interior}.
  Since $\mu_k,\nu_k\ge0$, the line $\Re z=c$ lies in
  every strip $S_{\nu_k,\mu_k}$. The Mellin identity of
  Proposition~\ref{prop:symbol} gives, for $f\in\mathcal D$,
  \begin{equation}\label{eq:app-conjugation}
    \mathcal U_{p,0}Tf=T_{M_c}\mathcal U_{p,0}f,
    \qquad M_c(\xi,k)=m_{\nu_k,\mu_k}(c-i\xi).
  \end{equation}
  Here $\xi$ is the Fourier variable dual to the logarithmic
  radius. The cylinder multiplier $T_{M_c}$ is initially
  understood on finite angular sums.
  We will prove its $L^p$ boundedness by constructing smooth
  extensions of the two large angular tails.

  \emph{Step 2: interpolation of the order shifts.}
  Choose $\chi\in C_c^\infty((-1/3,1/3))$ with
  $0\le\chi\le1$ and $\chi(0)=1$. For a large integer
  $K>|\alpha|+1$ and $\epsilon\in\{-1,1\}$, set
  \begin{equation}\label{eq:app-interpolation}
    h_\epsilon(x)=\frac12\sum_{n\ge K}
    (\nu_{\epsilon n}-\mu_{\epsilon n})\chi(x-n),
    \qquad x\ge K-1.
  \end{equation}
  The supports in this sum are disjoint. Thus the sum is
  locally finite and smooth, and at each point at most one
  summand or its derivative is nonzero. On its support,
  $|x-n|<1/3$. Condition~\eqref{eq:app-order-shifts} gives
  \begin{equation}\label{eq:app-interpolation-bounds}
    |h_\epsilon(x)|+|h_\epsilon'(x)|
    \le\frac{C}{1+x},\qquad
    h_\epsilon(n)=\frac{\nu_{\epsilon n}
      -\mu_{\epsilon n}}2\quad(n\ge K),
  \end{equation}
  with $C$ independent of $K$. Also, $h_\epsilon$ vanishes
  on $[K-1,K-1/3]$. Increase $K$ so that, for $x\ge K-1$,
  both $x+\epsilon\alpha\ge6$ and
  $|h_\epsilon(x)|\le1$ hold for the two signs.

  With $G_s(\zeta)=\Gamma(\zeta+s)/\Gamma(\zeta)$ as in
  \eqref{eq:Ga}, define
  \begin{equation}\label{eq:app-tail-symbol}
    \begin{gathered}
      w_\epsilon=\frac{x+\epsilon\alpha+c-i\xi}{2},\qquad
      v_\epsilon=\frac{x+\epsilon\alpha+2-c+i\xi}{2},\\
      F_\epsilon(\xi,x)
      =\frac{G_{h_\epsilon(x)}(w_\epsilon)}
        {G_{h_\epsilon(x)}(v_\epsilon)}.
    \end{gathered}
  \end{equation}
  All Gamma arguments have real part at least $2$.
  At $x=n\ge K$, one has
  $\mu_{\epsilon n}=n+\epsilon\alpha$; hence the
  interpolation and \eqref{eq:symbol-G} give the exact
  identities
  \begin{equation}\label{eq:app-tail-identity}
    F_\epsilon(\xi,n)=M_c(\xi,\epsilon n),
    \qquad n\ge K.
  \end{equation}
  Our next goal is to prove that $F_{\epsilon}$ is a multiplier
  satisfying \eqref{eq:marcinkiewicz}.

  \emph{Step 3: the variable order estimates.}
  Fix one sign and suppress $\epsilon$ in $h,w,v,F$.
  Put $R=1+x+|\xi|$. Uniformly for $|s|\le1$,
  the four quantities $|w|$, $|v|$, $|w+s|$, $|v+s|$
  are comparable to $R$. Lemma~\ref{lem:gamma}, with
  shift parameter in $[-1,1]$, therefore gives
  \begin{equation*}
    |F|\le C(|w|/|v|)^{h(x)}\le C.
  \end{equation*}
  For the derivatives, write
  \begin{equation*}
    A_s(\zeta)=\psi(\zeta+s)-\psi(\zeta),\qquad
    B_s(\zeta)=\psi'(\zeta+s)-\psi'(\zeta).
  \end{equation*}
  The same lemma gives, for $\zeta=w,v$ and $|s|\le1$,
  \begin{equation}\label{eq:app-digamma-differences}
    |A_s(\zeta)|\le C/R,\qquad
    |B_s(\zeta)|\le C/R^2.
  \end{equation}
  Put $D_s=\psi(w+s)-\psi(v+s)$. We also need
  \begin{equation}\label{eq:app-parameter-estimates}
    |D_s|\le C,\qquad
    |\psi'(w+s)|+|\psi'(v+s)|\le C/R.
  \end{equation}
  To verify these bounds, the uniform expansions
  \cite[Eqs.~5.11.2, 5.15.8]{NIST}, enlarged by compactness
  on the bounded part of $\Re\zeta\ge1$, give
  \begin{equation*}
    \psi(\zeta)=\log\zeta+O(|\zeta|^{-1}),\qquad
    |\psi'(\zeta)|\le C|\zeta|^{-1}.
  \end{equation*}
  The logarithm is the principal branch in that half plane.
  The real part of $\log(w+s)-\log(v+s)$ is bounded because
  the two moduli are comparable. Its imaginary part has
  absolute value less than $\pi$. This proves the first
  bound in \eqref{eq:app-parameter-estimates}; the second
  follows from the displayed estimate for $\psi'$.
  
  Differentiating $F$ directly is messy, and the computations
  are simplified by differentiating $\log F$ instead.
  To avoid picking an explicit logarithm of $F$ (which of course
  can be done), define instead
  \begin{equation*}
    L(\xi,x)=\int_0^{h(x)}D_s(\xi,x)\,ds.
  \end{equation*}
  The logarithmic derivative of $G_s(w)/G_s(v)$ with
  respect to $s$ is $D_s$, and this quotient equals $1$
  at $s=0$. Consequently $F=e^L$. Differentiating the
  integral, including its variable upper limit, gives
  \begin{equation}\label{eq:app-log-derivatives}
    \begin{aligned}
      L_\xi
      &=-\frac i2\bigl(A_h(w)+A_h(v)\bigr),\\
      L_x
      &=\frac12\bigl(A_h(w)-A_h(v)\bigr)+h'D_h,\\
      L_{\xi x}
      &=-\frac i4\bigl(B_h(w)+B_h(v)\bigr)
        -\frac{ih'}2
        \bigl(\psi'(w+h)+\psi'(v+h)\bigr).
    \end{aligned}
  \end{equation}
  Indeed, $w_\xi=-i/2$, $v_\xi=i/2$, and $w_x=v_x=1/2$.
  Integrating $\psi'$ from $0$ to $h$ produces the
  differences $A_h$ in the first two lines. Differentiating
  the first line with respect to $x$ produces $B_h/2$
  from each argument and $h'\psi'$ from each shift, which
  yields the last line.

  Since $|h'|\le C/x$ by
  \eqref{eq:app-interpolation-bounds} and $R\ge x$,
  equations~\eqref{eq:app-digamma-differences}--
  \eqref{eq:app-log-derivatives} imply
  \begin{equation*}
    |L_\xi|\le C/R,\qquad |L_x|\le C/x,\qquad
    |L_{\xi x}|\le C/R^2+C/(xR)\le C/(xR).
  \end{equation*}
  Now $F_\xi=FL_\xi$, $F_x=FL_x$, and
  $F_{\xi x}=F(L_{\xi x}+L_\xi L_x)$. Combining these
  identities with the bound for $F$ proves
  \begin{equation}\label{eq:app-mixed-bounds}
    |F|\le C,\qquad |F_\xi|\le C/R,\qquad
    |F_x|\le C/x,\qquad |F_{\xi x}|\le C/(xR).
  \end{equation}
  In particular, multiplication by $|\xi|$, $x$, or
  $|\xi|x$ in the respective derivative bounds gives
  exactly the product estimates
  \eqref{eq:marcinkiewicz}. Notice that the $x$ derivative
  need not decay as $|\xi|\to\infty$: the term $h'D_h$
  is the additional term absent from the constant shift
  calculation in Lemma~\ref{lem:mixed}.

  \emph{Step 4: restriction and the finite modes.}
  Extend the two tails to $\R^2$ by
  \begin{equation}\label{eq:app-extension}
    Q_\epsilon(\xi,\eta)=
    \begin{cases}
      F_\epsilon(\xi,\epsilon\eta),
        &\epsilon\eta>K-1,\\
      1,&\epsilon\eta\le K-1.
    \end{cases}
  \end{equation}
  Because $h_\epsilon=0$ on $[K-1,K-1/3]$, the first
  expression is also identically $1$ in a neighborhood of
  the junction. Thus $Q_\epsilon$ is smooth and bounded
  everywhere. Where it is not constant, $x=|\eta|$, so
  \eqref{eq:app-mixed-bounds} verifies every estimate in
  \eqref{eq:marcinkiewicz}. Theorem~\ref{thm:marcinkiewicz}
  makes $Q_\epsilon$ an $L^p(\R^2)$ multiplier, and
  Theorem~\ref{thm:deleeuw} makes its restriction to
  $\R\times\Z$ an $L^p$ multiplier on the cylinder.

  At each integer $|k|<K$ both extensions equal $1$.
  On either tail, one extension equals $M_c$ by
  \eqref{eq:app-tail-identity} and the other equals $1$.
  Therefore the following decomposition is exact:
  \begin{equation}\label{eq:app-decomposition}
    M_c(\xi,k)
      =Q_+(\xi,k)+Q_-(\xi,k)-1 +\sum_{|j|<K}\ind_{\{k=j\}}
        \bigl(m_{\nu_j,\mu_j}(c-i\xi)-1\bigr).
  \end{equation}
  For each fixed $j$, Proposition~\ref{prop:symbol} and
  the argument for \eqref{eq:fixed-mode-mikhlin} give
  \begin{equation*}
    \sup_{\xi\in\R}\left(
      |m_{\nu_j,\mu_j}(c-i\xi)|
      +(1+|\xi|)
      |\partial_\xi m_{\nu_j,\mu_j}(c-i\xi)|
    \right)<\infty.
  \end{equation*}
  That argument applies to any fixed nonnegative orders:
  the Gamma arguments have positive real part; on bounded
  $\xi$ intervals the symbol and its derivative are smooth;
  and the vertical Gamma and digamma expansions give the
  stated bounds for large $|\xi|$. In particular, zero
  values of $\mu_j$ or $\nu_j$ cause no difficulty because
  $0<c<2$. The one dimensional Mikhlin theorem therefore
  controls the radial multiplier in every summand of
  \eqref{eq:app-decomposition}. The factor
  $\ind_{\{k=j\}}$ is the bounded angular projection $P_j$.
  Fubini's theorem bounds their composition on the cylinder.
  There are only finitely many such terms, so the whole
  symbol $M_c$ is an $L^p$ multiplier.

  \emph{Step 5: compatibility, adjoints, and inverses.}
  From \eqref{eq:app-conjugation} and the isometry we obtain
  $\|Tf\|_p\le C_p\|f\|_p$ for $f\in\mathcal D$.
  If $f\in L^2\cap L^p$, choose $f_n\in\mathcal D$
  converging to $f$ in both norms. Then $Tf_n$ converges
  in $L^p$ by the estimate and in $L^2$ by unitarity.
  A subsequence converges almost everywhere in both senses,
  so the two limits agree. This identifies the bounded
  extension with the original $L^2$ operator on the
  intersection.

  The argument works for every $1<p<\infty$, hence also
  for $p'=p/(p-1)$. For $f\in L^2\cap L^p$ and
  $g\in L^2\cap L^{p'}$, the $L^2$ adjoint identity gives
  \begin{equation*}
    |\langle T^*f,g\rangle|
    =|\langle f,Tg\rangle|
    \le C_{p'}\|f\|_p\|g\|_{p'}.
  \end{equation*}
  The resulting functional of $g$ extends to $L^{p'}$.
  By duality it is represented by some $u\in L^p$ with
  $\|u\|_p\le C_{p'}\|f\|_p$. Testing on compactly
  supported smooth functions shows that $u=T^*f$ as
  distributions, hence almost everywhere. This proves the
  bound for $T^*$ and its compatibility with the $L^2$
  adjoint. Since $T$ and $T^*$ preserve $L^2\cap L^p$,
  the identities $TT^{*}=T^{*}T=I$ on $L^2$ hold on that
  intersection, hence on $L^p$ by density.
\end{proof}

\subsection{The angular transplantation}

\begin{proof}[Proof of
  Proposition~\ref{prop:em-transplantation}]
  Use the eigenbasis of Lemma~\ref{lem:em-eigenbasis}.
  The angular nonnegativity assumption gives $\tau_k\ge0$.
  For sufficiently large $|k|$, its eigenvalue estimate
  implies
  \begin{equation*}
    |\tau_k-\sigma_k|
    =\frac{|\lambda_k-(k+\alpha)^2|}
      {\tau_k+\sigma_k}
    \le\frac{\|a\|_\infty}{|k+\alpha|}
    \le\frac{C_{\alpha,a}}{1+|k|}.
  \end{equation*}
  All remaining orders are finite, so increasing the
  constant gives \eqref{eq:app-order-shifts} for
  $\mu_k=\sigma_k$ and $\nu_k=\tau_k$. Let $T$ be the
  corresponding operator in \eqref{eq:app-diagonal}.
  Lemma~\ref{lem:diagonal-transplantation} proves the
  $L^p$ bounds for $T,T^*$. Lemmas~\ref{lem:em-angular}
  and \ref{lem:em-eigenbasis} prove those for $J,J^*$.

  Applying the operators to finite angular sums gives
  $S=JT$ and $S^*=T^*J^*$ on $L^2$; density extends these
  identities to all of $L^2$. Thus $S$ is unitary and both
  products have bounded $L^p$ extensions. These extensions
  agree with the $L^2$ products on $L^2\cap L^p$, since
  each factor has that compatibility. The inverse
  identities extend to $L^p$ by density as in Step~5 above.

  Finally, the scalar spectral representation
  $h_s=\HH_s r^2\HH_s$ gives, for bounded Borel $m$,
  \begin{equation*}
    m(h_{\tau_k})\HH_{\tau_k}\HH_{\sigma_k}
    =\HH_{\tau_k}m(r^2)\HH_{\sigma_k}
    =\HH_{\tau_k}\HH_{\sigma_k}m(h_{\sigma_k}).
  \end{equation*}
  In the respective angular eigenbases, $H_{\alpha,a}$
  and $H_\alpha$ are the direct sums of $h_{\tau_k}$
  and $h_{\sigma_k}$. The displayed identity therefore
  proves $m(H_{\alpha,a})S=Sm(H_\alpha)$ on finite angular
  sums. All factors are bounded on $L^2$, so density
  proves the identity on the whole space.
\end{proof}

\begingroup
\raggedbottom

\endgroup


\begin{thebibliography}{99}

\bibitem{AdamiTeta1998} R.~Adami and A.~Teta, \emph{On the
Aharonov--Bohm Hamiltonian}, Lett. Math. Phys. \textbf{43}
(1998), 43--53.

\bibitem{ArtbazarYajima2000} G.~Artbazar and K.~Yajima,
\emph{The $L^p$ continuity of wave operators for one
dimensional Schr\"odinger operators}, J. Math. Sci. Univ.
Tokyo \textbf{7} (2000), 221--240.

\bibitem{Beceanu2014} M.~Beceanu, \emph{Structure of wave
operators for a scaling critical class of potentials}, Amer. J.
Math. \textbf{136} (2014), 255--308.

\bibitem{BeceanuSchlag2020} M.~Beceanu and W.~Schlag,
\emph{Structure formulas for wave operators}, Amer. J. Math.
\textbf{142} (2020), 751--807.

\bibitem{Ciaurri2026} \'O.~Ciaurri, \emph{Uniform weighted
inequalities for the Hankel transform transplantation operator},
Rev. R. Acad. Cienc. Exactas F\'is. Nat. Ser. A Mat. RACSAM
\textbf{120} (2026), Paper No.~5.
\url{https://doi.org/10.1007/s13398-025-01799-w}.

\bibitem{CMY2019} H.~D. Cornean, A.~Michelangeli, and K.~Yajima,
\emph{Two dimensional Schr\"odinger operators with point
interactions: threshold expansions, zero modes and $L^p$
boundedness of wave operators}, Rev. Math. Phys. \textbf{31}
(2019), 1950012.

\bibitem{DAnconaFanelli2006} P.~D'Ancona and L.~Fanelli,
\emph{$L^p$ boundedness of the wave operator for the one
dimensional Schr\"odinger operator}, Comm. Math. Phys.
\textbf{268} (2006), 415--438.
\url{https://doi.org/10.1007/s00220-006-0098-x}.

\bibitem{deLeeuw1965} K.~de Leeuw, \emph{On $L_p$ multipliers},
Ann.\ of Math. (2) \textbf{81} (1965), 364--379.

\bibitem{DerezinskiRichard2017} J.~Derezi\'nski and S.~Richard,
\emph{On Schr\"odinger operators with inverse square potentials
on the half line}, Ann. Henri Poincar\'e \textbf{18} (2017),
869--928.

\bibitem{DMSY2018} G.~Dell'Antonio, A.~Michelangeli,
R.~Scandone, and K.~Yajima, \emph{$L^p$ boundedness of wave
operators for the three dimensional multi centre point
interaction}, Ann. Henri Poincar\'e \textbf{19} (2018),
283--322.

\bibitem{FFFP2013} L.~Fanelli, V.~Felli, M.~A. Fontelos, and
A.~Primo, \emph{Time decay of scaling critical electromagnetic
Schr\"odinger flows}, Comm. Math. Phys. \textbf{324} (2013),
1033--1067.

\bibitem{FSWZZ2026} L.~Fanelli, X.~Su, Y.~Wang, J.~Zhang, and
J.~Zheng, \emph{Intertwining operators beyond the Stark effect},
Comm. Math. Phys. \textbf{407} (2026), Paper No.~92.
\url{https://doi.org/10.1007/s00220-026-05600-w}.

\bibitem{FZZ2022} L.~Fanelli, J.~Zhang, and J.~Zheng,
\emph{Dispersive estimates for 2D wave equations with critical
potentials}, Adv. Math. \textbf{400} (2022), 108333.

\bibitem{FZZ2023} L.~Fanelli, J.~Zhang, and J.~Zheng,
\emph{Uniform resolvent estimates for critical magnetic
Schr\"odinger operators in 2D}, Int. Math. Res. Not. IMRN
(2023), no.~20, 17656--17703.

\bibitem{FincoYajima2006} D.~Finco and K.~Yajima, \emph{The
$L^p$ boundedness of wave operators for Schr\"odinger operators
with threshold singularities. II\@. Even dimensional case}, J.
Math. Sci. Univ. Tokyo \textbf{13} (2006), 277--346.

\bibitem{Fermi2024} D.~Fermi, \emph{The Aharonov--Bohm
Hamiltonian: selfadjointness, spectral and scattering
properties}, arXiv:2407.15115 (2024).

\bibitem{GYZZ2022} X.~Gao, J.~Yin, J.~Zhang, and J.~Zheng,
\emph{Decay and Strichartz estimates in critical electromagnetic
fields}, J. Funct. Anal. \textbf{282} (2022), 109350.

\bibitem{Grafakos2014} L.~Grafakos, \emph{Classical Fourier
Analysis}, 3rd ed., Graduate Texts in Mathematics \textbf{249},
Springer, New York, 2014.

\bibitem{JensenYajima2002} A.~Jensen and K.~Yajima, \emph{A
remark on $L^p$ boundedness of wave operators for two
dimensional Schr\"odinger operators}, Comm. Math. Phys.
\textbf{225} (2002), 633--637.

\bibitem{MSZ2023} C.~Miao, X.~Su, and J.~Zheng, \emph{The
$W^{s,p}$ boundedness of stationary wave operators for the
Schr\"odinger operator with inverse square potential}, Trans.
Amer. Math. Soc. \textbf{376} (2023), 1739--1797.
\url{https://doi.org/10.1090/tran/8823}.

\bibitem{NIST} F.~W.~J. Olver et al. (eds.), \emph{NIST Digital
Library of Mathematical Functions},
\url{https://dlmf.nist.gov/}, Release 1.2.7 of 2026-06-15,
accessed 2 September 2026.

\bibitem{NowakStempak2006} A.~Nowak and K.~Stempak,
\emph{Weighted estimates for the Hankel transform
transplantation operator}, Tohoku Math. J. (2) \textbf{58}
(2006), 277--301. \url{https://doi.org/10.2748/tmj/1156256405}.

\bibitem{PankrashkinRichard2011} K.~Pankrashkin and S.~Richard,
\emph{Spectral and scattering theory for the Aharonov--Bohm
operators}, Rev. Math. Phys. \textbf{23} (2011), 53--81.

\bibitem{Richard2009} S.~Richard, \emph{New formulae for the
Aharonov--Bohm wave operators}, in: \emph{Spectral and
Scattering Theory for Quantum Magnetic Systems}, Contemp. Math.
\textbf{500}, Amer. Math. Soc., Providence, RI, 2009,
pp.~159--168.

\bibitem{Ruijsenaars1983} S.~N.~M. Ruijsenaars, \emph{The
Aharonov--Bohm effect and scattering theory}, Ann. Physics
\textbf{146} (1983), 1--34.

\bibitem{Saeki1970} S.~Saeki, \emph{Translation invariant
operators on groups}, T\^ohoku Math. J. (2) \textbf{22} (1970),
409--419.

\bibitem{SteinWeiss1971} E.~M. Stein and G.~Weiss,
\emph{Introduction to Fourier Analysis on Euclidean Spaces},
Princeton University Press, Princeton, NJ, 1971.

\bibitem{Stempak2002} K.~Stempak, \emph{On connections between
Hankel, Laguerre and Jacobi transplantations}, Tohoku Math. J.
(2) \textbf{54} (2002), 471--493.
\url{https://doi.org/10.2748/tmj/1113247646}.

\bibitem{Titchmarsh1948} E.~C. Titchmarsh, \emph{Introduction to
the Theory of Fourier Integrals}, 2nd ed., Clarendon Press,
Oxford, 1948.

\bibitem{Watson1944} G.~N. Watson, \emph{A Treatise on the
Theory of Bessel Functions}, 2nd ed., Cambridge University
Press, Cambridge, 1944.

\bibitem{Weder1999} R.~Weder, \emph{The $W_{k,p}$ continuity of
the Schr\"odinger wave operators on the line}, Comm. Math. Phys.
\textbf{208} (1999), 507--520.

\bibitem{Yajima1995} K.~Yajima, \emph{The $W^{k,p}$ continuity
of wave operators for Schr\"odinger operators}, J. Math. Soc.
Japan \textbf{47} (1995), 551--581.

\bibitem{Yajima1999} K.~Yajima, \emph{$L^p$ boundedness of wave
operators for two dimensional Schr\"odinger operators}, Comm.
Math. Phys. \textbf{208} (1999), 125--152.

\bibitem{Yajima2006} K.~Yajima, \emph{The $L^p$ boundedness of
wave operators for Schr\"odinger operators with threshold
singularities. I. The odd dimensional case}, J. Math. Sci. Univ.
Tokyo \textbf{13} (2006), 43--93.

\end{thebibliography}
\end{document}